\definecolor{darkgreen}{rgb}{0,0.45,0}
\newcommand{\cxymatrix}[1]{\vcenter{\xymatrix{#1}}} %
\numberwithin{equation}{section}
\theoremstyle{plain}
\newtheorem{thm}{Theorem}[section]
\newtheorem*{thm*}{Theorem} %
\newtheorem{prop}[thm]{Proposition}
\newtheorem{lem}[thm]{Lemma}
\newtheorem{cor}[thm]{Corollary}
\newtheorem*{cor*}{Corollary} %
\theoremstyle{definition}
\newtheorem{assum}[thm]{Assumption}
\newtheorem{condition}[thm]{Condition}
\newtheorem{convention}[thm]{Convention}
\newtheorem{defn}[thm]{Definition}
\newtheorem{ex}[thm]{Example}
\newtheorem{nota}[thm]{Notation}
\theoremstyle{remark}
\newtheorem{rem}[thm]{Remark}
\newcommand{\bigcol}{3.3pc}
\newcommand{\Def}{\textbf}
\newcommand{\F}{\mathbb{F}}
\newcommand{\Z}{\mathbb{Z}}
\newcommand{\cat}[1]{\mathbf{\mathcal{#1}}} %
\newcommand{\cT}{\cat{T}}
\newcommand{\cTo}{\cat{T}^{\opp}}
\newcommand{\al}{\alpha}
\newcommand{\be}{\beta}
\newcommand{\de}{\delta}
\newcommand{\ga}{\gamma}
\newcommand{\io}{\iota}
\newcommand{\ka}{\kappa}
\newcommand{\la}{\lambda}
\newcommand{\Om}{\Omega}
\newcommand{\phy}{\varphi}
\newcommand{\Si}{\Sigma}
\newcommand{\si}{\sigma}
\newcommand{\te}{\theta}
\newcommand{\circar}{\ar|(.48)*\cir<1.7pt>{}} %
\newcommand{\circto}{{\ooalign{$\longrightarrow$\cr\hidewidth$\circ$\hidewidth\cr}}}
\newcommand{\inj}{\hookrightarrow}
\newcommand{\ral}{\xrightarrow} %
\newcommand{\Ra}{\Rightarrow} %
\newcommand{\surj}{\twoheadrightarrow}
\newcommand{\lra}{\longrightarrow}
\newcommand{\bu}{\bullet}
\newcommand{\op}{\oplus}
\newcommand{\sm}{\wedge} %
\newcommand{\lan}{\left\langle}
\newcommand{\ran}{\right\rangle}
\newcommand{\tild}{\widetilde}
\newcommand{\Ab}{\mathrm{Ab}}
\newcommand{\dgMod}[1]{\mathrm{dgMod}_{#1}}
\newcommand{\Mod}[1]{\mathrm{Mod}_{#1}}
\newcommand{\StMod}{\mathrm{StMod}}
\newcommand{\abs}[1]{\lvert #1 \rvert}
\DeclareMathOperator{\Ext}{Ext}
\DeclareMathOperator{\Hom}{Hom}
\newcommand{\id}{\mathrm{id}}
\newcommand{\opp}{\mathrm{op}}
\newcommand{\steen}{\mathcal{A}}
\newcommand{\TF}{\mathrm{T}}
\newcommand{\ff}{\!\mathit{ff}} %
\newcommand{\fc}{\!\mathit{fc}} %
\newcommand{\cc}{\!\mathit{cc}} %
\newcommand{\Ship}{\!\mathit{SS}} %
\newcommand{\fix}{\overset{!\hspace*{1.5pt}}{,\rule{0pt}{3pt}}}
\begin{document}

\title[Higher Toda brackets and the Adams spectral sequence]
      {Higher Toda brackets and the Adams spectral sequence in triangulated categories} 
\date{\today}

\author{J. Daniel Christensen}
\address{Department of Mathematics\\
University of Western Ontario\\
London, Ontario, N6A 5B7\\
Canada}             
\email{jdc@uwo.ca}

\author{Martin Frankland}             
\address{Universit\"at Osnabr\"uck\\
Institut f\"ur Mathematik\\
Albrechtstr. 28a\\
49076 Osnabr\"uck\\
Germany}             
\email{martin.frankland@uni-osnabrueck.de}

\begin{abstract}
The Adams spectral sequence is available in any triangulated category equipped with a projective or injective class. Higher Toda brackets can also be defined in a triangulated category, as observed by B. Shipley based on J. Cohen's approach for spectra. We provide a family of definitions of higher Toda brackets, show that they are equivalent to Shipley's, and show that they are self-dual. Our main result is that the Adams differential $d_r$ in any Adams spectral sequence can be expressed as an $(r+1)$-fold Toda bracket and as an $r^{\text{th}}$ order cohomology operation.  We also show how the result simplifies under a sparseness assumption, discuss several examples, and give an elementary proof of a result of Heller, which implies that the three-fold Toda brackets in principle determine the higher Toda brackets.
\end{abstract}

\keywords{triangulated category, Adams spectral sequence, Toda bracket,
cohomology operation, differential, higher order operation, projective class}

\subjclass[2010]{Primary 55T15; Secondary 18E30, 55S20}

\maketitle

\tableofcontents

\section{Introduction}

The Adams spectral sequence is an important tool in stable homotopy theory. Given finite spectra $X$ and $Y$, the classical Adams spectral sequence is
\[
E_2^{s,t} = \Ext_{\steen}^{s,t}\left( H^*Y, H^*X \right) \Ra [\Si^{t-s}X, Y^{\wedge}_p] ,
\]
where $H^*X := H^*(X;\F_p)$ denotes mod $p$ cohomology and $\steen = H\F_p^* H\F_p$ denotes the mod $p$ Steenrod algebra.
Determining the differentials in the Adams spectral sequence generally requires 
a combination of techniques and much ingenuity.
The approach that provides a basis for our work is found in~\cite{Maunder64},
where Maunder showed that the differential $d_r$ in this spectral sequence is
determined by $r^{\text{th}}$ order cohomology operations, which we now review.

A primary cohomology operation in this context is simply an element of the Steenrod algebra,
and it is immediate from the construction of the Adams spectral sequence 
that the differential $d_1$ is given by primary cohomology operations.
A secondary cohomology operation corresponds to a relation among primary operations,
and is partially defined and multi-valued: it is defined on the kernel of
a primary operation and takes values in the cokernel of another primary operation.
Tertiary operations correspond to relations between relations, and have correspondingly
more complicated domains and codomains.
The pattern continues for higher operations.
Using that cohomology classes are representable,
secondary cohomology operations can also be expressed using $3$-fold Toda brackets
involving the cohomology class and two operations whose composite is null.
However, what one obtains in general is a \emph{subset} of the Toda bracket with less indeterminacy.
This observation will be the key to our generalization of Maunder's result to 
other Adams spectral sequences in other categories.

The starting point of this paper is the following observation.
On the one hand, the Adams spectral sequence can be constructed in any 
triangulated category equipped with a projective class or an injective class~\cite{Christensen98}.
For example, the classical Adams spectral sequence is constructed in the stable homotopy category
with the injective class consisting of retracts of products $\prod_i \Si^{n_i} H\F_p$.
On the other hand, higher Toda brackets can also be defined in an arbitrary triangulated category.
This was done by Shipley in~\cite{Shipley02}, based on Cohen's construction for spaces and spectra~\cite{Cohen68},
and was studied further in~\cite{Sagave08}.
The goal of this paper is to describe precisely how the Adams $d_r$ can be described as
a particular subset of an $(r+1)$-fold Toda bracket which can be viewed as an $r^{\text{th}}$ order cohomology operation,
all in the context of a general triangulated category.

Triangulated categories arise throughout mathematics,
so our work applies in various situations.
As an example, we give calculations involving the Adams spectral sequence in 
the stable module category of a group algebra.
Even in stable homotopy theory, there are a variety of Adams spectral sequences,
such as the Adams--Novikov 
spectral sequence or the motivic Adams spectral sequence, and our results apply to all of them.
Moreover, by working with minimal structure, our approach gains a certain elegance.

\subsection*{Organization and main results}

In Section~\ref{se:AdamsSS}, we review the construction of the Adams spectral sequence in a triangulated category equipped with a projective class or an injective class. In Section~\ref{se:3-fold-Toda-brackets}, we review the construction of $3$-fold Toda brackets in a triangulated category and some of their basic properties. Section~\ref{se:AdamsD2} describes how the Adams $d_2$ is given by $3$-fold Toda brackets. This section serves as a warm-up for Section~\ref{se:AdamsDr}.

In Section~\ref{se:HigherBrackets}, we recall the construction of higher Toda brackets in a triangulated category via filtered objects. We provide a family of alternate constructions, and prove that they are all equivalent. The main result is Theorem~\ref{th:n-fold}, which says roughly the following.

\begin{thm*}
There is an inductive way of computing an $n$-fold Toda bracket $\lan f_n, \ldots, f_1 \ran \subseteq \cat{T}(\Si^{n-2} X_0, X_n)$, where the inductive step picks three consecutive maps and reduces the length by one. The $(n-2)!$ ways of doing this yield the same subset, up to an explicit sign.   
\end{thm*}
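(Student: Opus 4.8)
The plan is to argue by induction on $n$, reducing the sequence of $n$ composable maps to a pair of composable maps in $n-2$ elementary steps and then composing; the bracket element lives in $\cat{T}(\Si^{n-2}X_0, X_n)$. The base cases are $n=3$, where there is a single step and nothing to compare, and $n=4$, where exactly two orders are available; the $n=4$ computation should pin down the explicit sign in its simplest instance and serve as the local model for the general argument. For the inductive step I would first record that a reduction order amounts to choosing, at each stage, which triple of consecutive maps to collapse, that $n-2$ such choices are made in total, and that any two complete orders are connected by a finite sequence of transpositions of \emph{consecutive} steps. It therefore suffices to show that interchanging two consecutive steps leaves the resulting subset of $\cat{T}(\Si^{n-2}X_0, X_n)$ unchanged, up to a sign that can be read off from the two triples involved.

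There are two cases. When the two steps act on triples of maps that do not overlap, they modify disjoint stretches of the sequence and hence commute essentially on the nose; the only subtlety is that the later step may act on maps that the earlier step has suspended, so I would check that the induced sign is governed purely by the naturality of the triangulated suspension and matches the claimed explicit sign. The substantive case is that of two \emph{overlapping} triples, which share one or two of the three consecutive maps. Here I would localize the problem: since both orders alter only the affected stretch and leave the rest of the sequence untouched, the identity to be proved reduces to a local relation among $3$-fold Toda brackets involving at most five consecutive maps. I expect this local relation to be the genuine heart of the proof and the main obstacle --- morally a pentagon-type coherence identity for iterated bracket formation.

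To establish the local relation I would build the two competing iterated cofiber constructions and compare them by means of the octahedral axiom, which provides precisely the compatibility needed to identify the two auxiliary objects produced by collapsing the shared maps in the two different orders. The sign is then extracted by tracking the rotation and suspension conventions fixed earlier in the section; this bookkeeping, while delicate, is mechanical once the objects have been matched. Finally, I would reconcile the outcome with Shipley's definition: because the family of inductive constructions has already been shown equivalent to Shipley's manifestly order-independent bracket, the common subset obtained along any reduction order is forced to be (a fixed sign times) Shipley's $n$-fold Toda bracket. This both explains why all $(n-2)!$ orders yield the same subset and identifies that subset intrinsically.
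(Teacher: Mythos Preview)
Your proposal is correct and takes essentially the same approach as the paper: a local four-fold comparison established via the octahedral axiom (this is Lemma~\ref{le:four-fold}), followed by a combinatorial argument connecting all reduction orders through iterations of that single move, with signs tracked along the way. Two minor corrections are worth making. First, the local overlapping lemma involves exactly four consecutive maps, not five, comparing $T(T(f_4,f_3,f_2),\Si f_1)$ with $T(f_4,T(f_3,f_2,f_1))$; moreover, the paper's combinatorics (an induction on its encoding sequences $(j_1,\ldots,j_{n-2})$) show this one overlapping move already connects all $(n-2)!$ orders, so your non-overlapping case is never needed. Second, your closing appeal to Shipley is backwards: Shipley's bracket corresponds to one particular reduction order (always collapsing on the left), so order-independence is precisely the content of the theorem rather than something one can inherit from Shipley's definition.
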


As a byproduct, we obtain Corollary~\ref{co:SelfDual}, which would be tricky to prove directly from the filtered object definition.

\begin{cor*}
Toda brackets are self-dual up to suspension: $\lan f_n, \ldots, f_1 \ran \subseteq \cat{T}(\Si^{n-2} X_0, X_n)$ corresponds to the Toda bracket computed in the opposite category
\[
\lan f_1, \ldots, f_n \ran \subseteq \cat{T}^{\opp}\left( \Si^{-(n-2)} X_n, X_0 \right) = \cat{T}(X_0, \Si^{-(n-2)} X_n).
\] 
\end{cor*}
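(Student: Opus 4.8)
The plan is to avoid the filtered-object definition entirely and instead exploit the inductive description of Theorem~\ref{th:n-fold}, which expresses any $n$-fold bracket through a sequence of $n-2$ elementary steps, each a $3$-fold Toda bracket formed from three consecutive maps. The key structural fact is that the opposite category $\cTo$ is again triangulated, with suspension $\Si^{-1}$ and triangles obtained by reversing and re-rotating those of $\cat{T}$; a composable string $X_0 \ral{f_1} X_1 \lra \cdots \ral{f_n} X_n$ in $\cat{T}$ becomes the reversed string $X_n \ral{f_n} \cdots \ral{f_1} X_0$ in $\cTo$, which is precisely the input datum for $\lan f_1, \ldots, f_n \ran$ computed in $\cTo$. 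Matching the target groups is then just the identification $\cTo(\Si^{-(n-2)} X_n, X_0) = \cat{T}(X_0, \Si^{-(n-2)} X_n)$ recorded in the corollary.

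First I would settle the base case $n = 3$: the $3$-fold Toda bracket is self-dual up to sign. This is essentially built into the triangulated structure, since $\lan f_3, f_2, f_1 \ran$ may be computed either by extending $f_2$ over the cofiber of $f_1$ or, dually, by lifting $f_2$ over the fiber of $f_3$, and these two recipes are interchanged by passing to $\cTo$. I would either invoke this from the $3$-fold properties reviewed in Section~\ref{se:3-fold-Toda-brackets} or verify it directly by rotating the defining triangle.

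The inductive step is then formal. Granting the claim for brackets of length less than $n$, I would apply Theorem~\ref{th:n-fold} in $\cat{T}$ to reduce $\lan f_n, \ldots, f_1 \ran$ by contracting one triple of consecutive maps, producing an $(n-1)$-fold bracket, and simultaneously apply Theorem~\ref{th:n-fold} in $\cTo$ to reduce $\lan f_1, \ldots, f_n \ran$ by contracting the corresponding triple. Since the two contractions are related by the base case and the surviving $(n-1)$-fold brackets are related by the inductive hypothesis, the two subsets agree. The freedom to choose which triple to contract is exactly what Theorem~\ref{th:n-fold} certifies to be harmless, so no separate compatibility check between the two reductions is required.

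The main obstacle will be the sign bookkeeping. Both Theorem~\ref{th:n-fold} and the $3$-fold self-duality carry explicit signs depending on the suspension degrees of the maps involved, and passing to $\cTo$ reverses the order in which these degrees accumulate. I would fix a canonical contraction order, say always contracting the first three maps, so that the sign from the inductive hypothesis and the sign from the base case combine into a single closed form, and then match this against the claimed sign by comparing with the explicit sign that Theorem~\ref{th:n-fold} already computes for reversing the association order. Getting the interaction between the suspension shift $\Si^{n-2} \mapsto \Si^{-(n-2)}$ and these accumulated signs exactly right is the delicate point.
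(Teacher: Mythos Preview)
Your approach is essentially the paper's proof reorganized as an induction on $n$; both rest on the same three ingredients: the comparison of $3$-fold Toda families in $\cT$ and $\cTo$, Lemma~\ref{le:suspension-of-Toda-family}, and Theorem~\ref{th:n-fold}. The paper, however, carries out the comparison directly at the level of iterated Toda \emph{families} rather than Toda \emph{brackets}, and this is slightly cleaner for a reason you have underestimated. After you contract one triple in $\cT$ and the corresponding triple in $\cTo$, the resulting $(n-1)$-tuples are not merely reversals of one another: they differ by a global (de)suspension of every map. Hence your inductive hypothesis as stated (self-duality for shorter brackets) does not apply directly; you would additionally need that $(n-1)$-fold brackets are compatible with suspending all inputs, up to sign. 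This is true and follows from iterating Lemma~\ref{le:suspension-of-Toda-family}, but it is an extra lemma that you have folded into ``sign bookkeeping''. The paper sidesteps this by never passing to brackets in the middle of the argument: it keeps the suspensions at the Toda-family level, pushes them through one $T$ at a time via Lemma~\ref{le:suspension-of-Toda-family}, and only at the very end invokes Theorem~\ref{th:n-fold} once to convert the inside-out iterated family $T(f_n, T(f_{n-1}, \ldots))$ to the standard one. The two sign contributions---from the $1+2+\cdots+(n-3)$ suspensions and from the single application of Theorem~\ref{th:n-fold}---then visibly cancel, which is harder to see in the inductive phrasing.
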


Section~\ref{se:AdamsDr} establishes how the Adams $d_r$ is given by $(r+1)$-fold Toda brackets. Our main results are Theorems~\ref{th:d1pdex} and \ref{th:AdamsDrCohomOp}, which say roughly the following.

\begin{thm*}
Let $[x] \in E_r^{s,t}$ be a class in the $E_r$ term of the Adams spectral sequence. %
As subsets of $E_1^{s+r,t+r-1}$, we have
\begin{align*}
d_r [x] &= \lan \Si^{r-1} d_1 , \ldots , \Si^2 d_1, \Si d_1 , \Si p_{s+1} ,  \de_s x \ran \\
&= \lan \Si^{r-1} d_1 \fix \ldots \fix \Si d_1 \fix d_1 , x \ran .
\end{align*}
\end{thm*}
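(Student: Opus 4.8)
The plan is to prove the two displayed equalities in turn: the first from the explicit zig-zag recipe for $d_r$, and the second by regrouping the first via Theorem~\ref{th:n-fold}. Recall from Section~\ref{se:AdamsSS} that a class $[x]\in E_r^{s,t}$ is represented by a map $x$ into $P_s$ whose image $\de_s x$ in $\Si X_{s+1}$ lifts successively up the Adams resolution through the defining triangles (in the projective-class case; the injective case is dual)
\[
X_{s+j+1}\to X_{s+j}\xrightarrow{\,p_{s+j}\,} P_{s+j}\xrightarrow{\,\de_{s+j}\,}\Si X_{s+j+1},
\]
and that $d_r[x]$ is read off by projecting the $r$-th lift to $P_{s+r}$, suspended appropriately. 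I would first record this as a commuting staircase of lifts, noting that each step exists precisely because the previous composite into $P_{s+j}$ vanishes, which is exactly the surviving-cycle condition for $[x]$.

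The heart of the argument is to recognize this staircase as a defining system for a higher Toda bracket in the sense of Section~\ref{se:HigherBrackets}. The successive lifts are exactly the coherent fillers that a filtered-object presentation of $\lan \Si^{r-1}d_1,\ldots,\Si d_1,\Si p_{s+1},\de_s x\ran$ requires, and each vanishing consecutive composite among the entries matches a null-composite in the tower: $\de_{s+j}p_{s+j}=0$ forces $\Si^{j}d_1\circ\Si^{j-1}d_1=0$ (together with $d_1d_1=0$ on $E_1$), while $\Si p_{s+1}\circ\de_s x=0$ encodes $d_1x=0$. Matching the two presentations term by term, and tracking the suspension that accumulates at each rung, identifies the projected top lift with a bracket element. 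I expect the cleanest organization to be induction on $r$: the base case $r=2$ is the $3$-fold bracket of Section~\ref{se:AdamsD2}, and the inductive step peels off the outermost $\Si^{r-1}d_1$ using the three-consecutive-maps reduction of Theorem~\ref{th:n-fold}, relating the $(r+1)$-fold bracket to the $r$-fold bracket computing the differential of the once-lifted class. Showing that every defining system arises from such a staircase, up to the allowed choices, yields equality of subsets rather than mere containment.

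For the second equality I would invoke Theorem~\ref{th:n-fold} directly. Its assertion that the $(n-2)!$ inductive reductions all compute the same subset, up to an explicit sign, lets me group the consecutive maps $\Si^{r-1}d_1,\ldots,\Si d_1, d_1$ into the single $r$-th order operation $\Si^{r-1}d_1\fix\cdots\fix d_1$ and collapse the expression to the two-entry form $\lan \Si^{r-1}d_1\fix\cdots\fix d_1,\,x\ran$, where the symbol $\fix$ records that one retains the reduced subset with its sign rather than the full bracket.

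I expect the difficulty to be bookkeeping rather than conceptual. The delicate points are: (i) verifying the equality as \emph{subsets}, i.e.\ that the Toda indeterminacy coincides exactly with the coset of representatives of $d_r[x]$ in $E_1^{s+r,t+r-1}$, and not merely matching a single element; and (ii) keeping the suspensions and the signs supplied by Theorem~\ref{th:n-fold} consistent as one climbs the tower and then regroups. A secondary subtlety is ensuring the staircase extracted from the Adams resolution is general enough to realize every element of the bracket, so that the two subsets genuinely coincide.
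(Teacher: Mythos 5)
On the first equality your plan is essentially the paper's (Theorem~\ref{th:d1pdex}): compute the iterated Toda family starting from the right-hand triple $T(\Si d_1, \Si p_{s+1}, \de_s x)$, reduce inductively to the $r=2$ case, and use Theorem~\ref{th:n-fold} together with Lemma~\ref{le:suspension-of-Toda-family} so that the reordering sign $(-1)^{1+\cdots+(r-2)}$ cancels against the sign produced by pushing suspensions through the Toda families. But the point you defer as ``bookkeeping'' is the actual content, and your sketch omits the idea that resolves it: in $T(\Si d_1, \Si p_{s+1}, \de_s x)$ the extension of $\Si d_1$ over the cofiber $\Si\de_{s+1}\colon \Si I_{s+1} \to \Si^2 Y_{s+2}$ of $\Si p_{s+1}$ is \emph{unique}, equal to $\Si^2 p_{s+2}$, because $\Si \de_{s+1}$ is $\cat{I}$-epic and $\Si^2 I_{s+2}$ is injective. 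That uniqueness is what collapses the Toda family to the pairs $(\Si^2 p_{s+2}, \Si x_1)$ with $x_1$ precisely a lift of $\de_s x$ through $\Si i_{s+1}$, so the bracket contains nothing beyond $d_r[x]$ and the induction can continue (the new rightmost triple again ends in a $p$-map and a lift). Note also that the reduction relating the $(r+1)$-fold bracket to the $r$-fold bracket of the once-lifted class is the one consuming the three \emph{rightmost} entries; ``peeling off the outermost $\Si^{r-1}d_1$'' is just the standard definition and does not interact with the lift structure at all.

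The second equality is where the proposal genuinely fails. You treat $\lan \Si^{r-1} d_1 \fix \cdots \fix \Si d_1 \fix d_1, x\ran$ as if it were the full bracket computed in a particular association order, so that Theorem~\ref{th:n-fold} would ``collapse'' the first expression into it. That is not what $\fix$ means, and no reassociation of $\lan \Si^{r-1}d_1,\ldots,\Si d_1,\Si p_{s+1},\de_s x\ran$ can produce it --- the two brackets do not even have the same entries (one ends in $\Si p_{s+1},\ \de_s x$, the other in $d_1,\ x$). The restricted bracket is defined by a separate recursion in which every Toda family is forced to use the chosen factorizations $\Si^k d_1 = (\Si^{k+1} p_{s+k+1})(\Si^k \de_{s+k})$ and the distinguished triangles of the Adams resolution, via iterated octahedra; Theorem~\ref{th:n-fold} says nothing about such restricted families, and the restricted bracket is in general a \emph{proper} subset of the full one (Propositions~\ref{pr:proper-inclusion} and~\ref{pr:proper-inclusion-C4}), so any argument deriving it from the full bracket by a sign-twisted regrouping would prove something false. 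The paper's proof of this half (Theorem~\ref{th:AdamsDrCohomOp}) is a genuine octahedron chase: it shows each doubly restricted Toda family has essentially one element, identifies the intermediate objects $W_k$ as fibers of the composites $\Si^r(i_{s+r-k}\cdots i_{s+r-1})$, and then matches the lifts through $\io_{r-1}$ with the lifts $\tild{x}$ defining $d_r[x]$ by a correction argument as in Proposition~\ref{pr:DifferentD2}\eqref{it:d1d1x}. None of that is present in your plan, so the second equality is unproved as written.
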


Here, $d_1$, $p_{s+1}$, and $\de_s$ are maps appearing in the Adams resolution of $Y$, where each $d_1$ is a primary cohomology operation. The first expression for $d_r [x]$ is an $(r+1)$-fold Toda bracket. The second expression denotes an appropriate subset of the bracket $\lan \Si^{r-1} d_1, \ldots, \Si d_1, d_1 , x \ran$ with some choices dictated by the Adams resolution of $Y$. This description exhibits $d_r [x]$ as an $r^{\text{th}}$ order cohomology operation applied to $x$.

In Section~\ref{se:sparse}, we show that when certain sparseness assumptions are made, the subset\break $\lan \Si^{r-1} d_1 \fix \ldots \fix \Si d_1 \fix d_1 , x \ran$ coincides with the full Toda bracket, and give examples of this phenomenon. See Theorem~\ref{th:SparseDrProj}, Proposition~\ref{pr:HomotCoproduct}, and Example~\ref{ex:Compact}. The main application is to computing maps in the homotopy category of $R$-module spectra, for a ring spectrum $R$ whose coefficient ring $\pi_* R$ is sufficiently sparse, such as $ku$. See Example~\ref{ex:RingSpectrum}.

In Appendix~\ref{se:StableMod}, we compute examples of Toda brackets in stable module categories. In particular, Proposition~\ref{pr:proper-inclusion-C4} provides an example where the inclusion $d_2 [x] \subseteq \lan \Si d_1, d_1, x \ran$ is proper. Appendix~\ref{se:Heller} provides for the record a short, simple proof of a theorem due to Heller, that $3$-fold Toda brackets determine the triangulated structure. As a corollary, we note that the $3$-fold Toda brackets indirectly determine the higher Toda brackets.

\subsection*{Related work}

Detailed treatments of secondary operations can be found in \cite{Adams60}*{\S 3.6},
where Adams used secondary cohomology operations to solve the Hopf invariant one problem,
\cite{MosherT68}*{Chapter 16}, and \cite{Harper02}*{Chapter~4}.

There are various approaches to higher order cohomology operations and higher Toda brackets in the literature, many of which use some form of enrichment in spaces, chain complexes, or groupoids; see for instance \cite{Spanier63}, \cite{Maunder63}, \cite{Kochman80}, and \cite{Klaus01}. In this paper, we work solely with the triangulated structure, without enhancement, and provide no comparison to those other approaches.

In \cite{BauesJ06} and \cite{BauesJ11}, Baues and Jibladze express the Adams $d_2$ in terms of
secondary cohomology operations, and this is generalized to higher differentials by Baues and Blanc in~\cite{BauesB15}.
Their approach starts with an injective resolution as in Diagram~\eqref{eq:InjResol},
and witnesses the equations $d_1 d_1 = 0$ by providing suitably coherent null-homotopies,
described using mapping spaces.
Using this coherence data, the authors express a representative of $d_r [x]$ as a
specific element of the Toda bracket $\lan \Si^{r-1} d_1, \ldots, \Si d_1, d_1, x \ran$.
While this approach makes use of an enrichment, 
we suspect that by translating the (higher dimensional) null-homotopies into lifts to fibers or extensions to cofibers,
one could relate their expression for $d_r [x]$ to ours.

\subsection*{Acknowledgments}

We thank Robert Bruner, Dan Isaksen, 
Peter Jorgensen, 
Fernando Muro, 
Irakli Patchkoria, 
Steffen Sagave, and Dylan Wilson for helpful conversations, as well as the referee for their useful comments.  The second author also thanks the Max-Planck-Institut f\"ur Mathematik for its hospitality.
The second author was partially funded by a grant of the DFG SPP 1786: Homotopy Theory and Algebraic Geometry.

\section{The Adams spectral sequence}\label{se:AdamsSS}

In this section, we recall the construction of the Adams spectral sequence in a triangulated category, along with some of its features. We follow~\cite{Christensen98}*{\S 4}, or rather its dual. Some references for the classical Adams spectral sequence are~\cite{Adams74}*{\S III.15}, \cite{Margolis83}*{Chapter 16}, and~\cite{Bruner09}. %
Background material on triangulated categories can be found in \cite{Neeman01}*{Chapter 1}, 
\cite{Margolis83}*{Appendix 2}, and \cite{Weibel94}*{Chapter 10}.
We assume that the suspension functor $\Si$ is an equivalence, with chosen inverse $\Si^{-1}$.
Moreover, we assume we have chosen natural isomorphisms $\Si \Si^{-1} \cong \id$ and $\Si^{-1} \Si \cong \id$
making $\Si$ and $\Si^{-1}$ into an adjoint equivalence.
We silently use these isomorphisms when needed, e.g., when we say that a triangle
of the form $\Si^{-1} Z \to X \to Y \to Z$ is distinguished.

\begin{defn}[{\cite{Christensen98}*{Proposition 2.6}}]
A \Def{projective class} in a triangulated category $\cat{T}$ is a pair $(\cat{P}, \cat{N})$ where $\cat{P}$ is a class of objects and $\cat{N}$ is a class of maps satisfying the following properties.
\begin{enumerate}
\item A map $f \colon X \to Y$ is in $\cat{N}$ if and only if the induced map
\[
f_* \colon \cat{T}(P,X) \to \cat{T}(P,Y)
\]
is zero for all $P$ in $\cat{P}$. In other words, $\cat{N}$ consists of the \Def{$\cat{P}$-null} maps.
\item An object $P$ is in $\cat{P}$ if and only if the induced map
\[
f_* \colon \cat{T}(P,X) \to \cat{T}(P,Y)
\]
is zero for all $f$ in $\cat{N}$.
\item For every object $X$, there is a distinguished triangle $P \to X \ral{f} Y \to \Si P$, where $P$ is in $\cat{P}$ and $f$ is in $\cat{N}$.
\end{enumerate}
\end{defn}

In particular, the class $\cat{P}$ is closed under arbitrary coproducts and retracts. The objects in $\cat{P}$ are called \Def{projective}.

\begin{defn}
A projective class is \Def{stable} if it is closed under shifts, i.e., $P \in \cat{P}$ implies $\Si^n P \in \cat{P}$ for all $n \in \Z$.%
\end{defn}

We will assume for convenience that our projective class is stable. We suspect that many of the results can be adapted to unstable projective classes, with a careful treatment of shifts. 

\begin{defn}
Let $\cat{P}$ be a projective class. A map $f \colon X \to Y$ is called
\begin{enumerate}
\item \Def{$\cat{P}$-epic} if the map
\[
f_* \colon \cat{T}(P,X) \to \cat{T}(P,Y)
\]
is surjective for all $P \in \cat{P}$. Equivalently, the map to the cofiber $Y \to C_f$ is $\cat{P}$-null.
\item \Def{$\cat{P}$-monic} if the map
\[
f_* \colon \cat{T}(P,X) \to \cat{T}(P,Y)
\]
is injective for all $P \in \cat{P}$. Equivalently, the map from the fiber $\Si^{-1} C_f \to X$ is $\cat{P}$-null.
\end{enumerate}
\end{defn}

\begin{ex}\label{ex:GhostSphere}
Let $\cat{T}$ be the stable homotopy category and $\cat{P}$ the projective class consisting of retracts of wedges of spheres $\bigvee_i S^{n_i}$. This is called the \emph{ghost projective class}, studied for instance in \cite{Christensen98}*{\S 7}.%
\end{ex}

Now we dualize everything.

\begin{defn}
An \Def{injective class} in a triangulated category $\cat{T}$ is a projective class in the opposite category $\cat{T}^{\opp}$. Explicitly, it is a pair $(\cat{I}, \cat{N})$ where $\cat{I}$ is a class of objects and $\cat{N}$ is a class of maps satisfying the following properties.
\begin{enumerate}
\item A map $f \colon X \to Y$ is in $\cat{N}$ if and only if the induced map
\[
f^* \colon \cat{T}(Y,I) \to \cat{T}(X,I)
\]
is zero for all $I$ in $\cat{I}$.
\item An object $I$ is in $\cat{I}$ if and only if the induced map
\[
f^* \colon \cat{T}(Y,I) \to \cat{T}(X,I)
\]
is zero for all $f$ in $\cat{N}$.
\item For every object $X$, there is a distinguished triangle $\Si^{-1} I \to W \ral{f} X \to I$, where $I$ is in $\cat{I}$ and $f$ is in $\cat{N}$.
\end{enumerate}
\end{defn}

In particular, the class $\cat{I}$ is closed under arbitrary products and retracts. The objects in $\cat{I}$ are called \Def{injective}.
Just as for projective classes, we will assume for convenience that our injective class is stable.

\begin{ex}\label{ex:InjClass}
Let $\cat{T}$ be the stable homotopy category. Take $\cat{N}$ to be the class of maps inducing zero on mod $p$ cohomology and $\cat{I}$ to be the retracts of (arbitrary) products $\prod_i \Si^{n_i} H\F_p$ with $n_i \in \Z$. One can generalize this example to any cohomology theory (spectrum) $E$ instead of $H\F_p$, letting $\cat{I}_E$ denote the injective class consisting of retracts of products $\prod_i \Si^{n_i} E$.
\end{ex}

\begin{defn}
Let $\cat{I}$ be an injective class. A map $f \colon X \to Y$ is called
\begin{enumerate}
\item \Def{$\cat{I}$-monic} if the map
\[
f^* \colon \cat{T}(Y,I) \to \cat{T}(X,I)
\]
is surjective for all $I \in \cat{I}$. Equivalently, the map from the fiber $\Si^{-1} C_f \to X$ is $\cat{I}$-null.
\item \Def{$\cat{I}$-epic} if the map
\[
f^* \colon \cat{T}(Y,I) \to \cat{T}(X,I)
\]
is injective for all $I \in \cat{I}$. Equivalently, the map to the cofiber $Y \to C_f$ is $\cat{I}$-null.
\end{enumerate}
\end{defn}

\begin{rem}
The projectives and $\cat{P}$-epic maps determine each other via the lifting property
\[
\xymatrix{
& X \ar@{->>}[d]^f \\
P \ar@{-->}[ur] \ar@{->}[r] & Y. \\
}
\]
Dually, the injectives and $\cat{I}$-monic maps determine each other via the extension property
\[
\xymatrix{
X \ar[r] \ar@{ >->}[d]_f & I \\
Y . \ar@{-->}[ur] &  \\
}
\]
This is part of the equivalent definition of a projective (resp.\ injective) class described in \cite{Christensen98}*{Proposition 2.4}. 
\end{rem}

\begin{convention}\label{co:SuspensionIso}
We will implicitly use the natural isomorphism $\cat{T}(A,B) \cong \cat{T}(\Si^k A, \Si^k B)$ sending a map $f$ to $\Si^k f$. 
\end{convention}

\begin{defn}\label{def:AdamsResol}
An \Def{Adams resolution} of an object $X$ in $\cat{T}$ with respect to a projective class $(\cat{P}, \cat{N})$ is a diagram
\begin{equation} \label{eq:AdamsResolProj}
\cxymatrix{
\mathllap{X =\ }X_0 \ar[rr]^{i_0} & & X_1 \circar[dl]^{\de_0} \ar[rr]^{i_1} & & X_2 \circar[dl]^{\de_1} \ar[rr]^{i_2} & & X_3 \circar[dl]^{\de_2} \ar[r] & \cdots \\
& P_0 \ar@{->>}[ul]^{p_0} & & P_1 \ar@{->>}[ul]^{p_1} & & P_2 \ar@{->>}[ul]^{p_2} & & \\
}
\end{equation}
where every $P_s$ is projective, every map $i_s$ is in $\cat{N}$, and every triangle $P_s \ral{p_s} X_s \ral{i_s} X_{s+1} \ral{\de_s} \Si P_s$ is distinguished. Here the arrows $\de_s \colon X_{s+1} \circto P_{s}$ denote degree-shifting maps, namely, maps $\de_s \colon X_{s+1} \to \Si P_{s}$.

Dually, an \Def{Adams resolution} of an object $Y$ in $\cat{T}$ with respect to an injective class $(\cat{I}, \cat{N})$ is a diagram
\begin{equation}\label{eq:AdamsResolInj}
\cxymatrix{
\mathllap{Y =\ }Y_0 \ar@{ >->}[dr]_{p_0} & & Y_1 \ar@{ >->}[dr]_{p_1} \ar[ll]_{i_0} & & Y_2 \ar@{ >->}[dr]_{p_2} \ar[ll]_{i_1} & & Y_3 \ar[ll]_{i_2} & \cdots \ar[l] \\
& I_0 \circar[ur]_{\de_0} & & I_1 \circar[ur]_{\de_1} & & I_2 \circar[ur]_{\de_2} & & \\
}
\end{equation}
where every $I_s$ is injective, every map $i_s$ is in $\cat{N}$, and every triangle $\Si^{-1} I_s \ral{\Si^{-1} \de_s} Y_{s+1} \ral{i_s} Y_s \ral{p_s} I_s$ is distinguished.%
\end{defn}
From now on, fix a triangulated category $\cat{T}$ and a (stable) injective class $(\cat{I}, \cat{N})$ in $\cat{T}$. 
By repeatedly using condition (3) in the definition of an injective class,
we get the following lemma.

\begin{lem}
Every object $Y$ of $\cat{T}$ admits an Adams resolution.
\end{lem}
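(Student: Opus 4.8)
The plan is to build the resolution \eqref{eq:AdamsResolInj} by a straightforward induction on $s$, feeding each newly produced object back into property~(3) of the injective class. This is exactly dual to the construction in the projective case carried out in \cite{Christensen98}*{\S 4}, so morally there is nothing new to prove; the work is purely in matching up the notation.

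First I would set $Y_0 := Y$. For the inductive step, suppose $Y_s$ has been constructed. Applying property~(3) of the injective class $(\cat{I},\cat{N})$ to the object $Y_s$ produces an injective $I_s \in \cat{I}$, an object I will christen $Y_{s+1}$, and a distinguished triangle
\[
\Si^{-1} I_s \ral{\Si^{-1}\de_s} Y_{s+1} \ral{i_s} Y_s \ral{p_s} I_s
\]
in which the map $i_s \colon Y_{s+1} \to Y_s$ lies in $\cat{N}$. Under the identification $(X, W, I, f) = (Y_s, Y_{s+1}, I_s, i_s)$ this is precisely the data demanded in Definition~\ref{def:AdamsResol}, with $\de_s \colon I_s \to \Si Y_{s+1}$ read off as the degree-shifting map and $p_s$ as the final arrow $Y_s \to I_s$. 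Iterating for all $s \geq 0$ assembles the full diagram.

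The one point I would pause to verify is that the decoration $\rightarrowtail$ on $p_s$---i.e.\ the assertion that $p_s$ is $\cat{I}$-monic---comes for free and is not an extra condition to be arranged. This follows from the same triangle together with $i_s \in \cat{N}$: rotating the triangle exhibits $i_s \colon Y_{s+1} \to Y_s$ as the map from the fiber of $p_s$, namely $\Si^{-1}C_{p_s} \to Y_s$, and since $i_s$ is $\cat{I}$-null the equivalent characterization of $\cat{I}$-monic maps recorded just after Example~\ref{ex:InjClass} gives at once that $p_s$ is $\cat{I}$-monic. I do not expect any genuine obstacle here: all the content is packaged into property~(3), and the induction is bookkeeping, the only mild care being the notational dictionary above and the observation that monicity need not be imposed by hand.
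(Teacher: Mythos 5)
Your proof is correct and is exactly the intended argument: the paper states this lemma without proof precisely because it is immediate from iterating axiom~(3) of the injective class, as you do. Your side remark that the $\cat{I}$-monicity of $p_s$ is automatic (since the map from the fiber of $p_s$ is $i_s \in \cat{N}$) is also right and matches the conventions in the paper's Definition~\ref{def:AdamsResol}.
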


Given an object $X$ and an Adams resolution of $Y$, applying $\cat{T}(X,-)$ yields an exact couple
\[
\xymatrix{
\bigoplus_{s,t} \cat{T}(\Sigma^{t-s} X, Y_s) \ar[rr]^-{i = \op (i_s)_*} & & 
\bigoplus_{s,t} \cat{T}(\Sigma^{t-s} X, Y_s) \ar[dl]^-{p = \op (p_s)_*} \\
& \bigoplus_{s,t} \cat{T}(\Sigma^{t-s} X, I_s) \ar[ul]^-{\de = \op (\de_s)_*}
}
\]
and thus a spectral sequence with $E_1$ term
\[
E_1^{s,t} %
        = \cat{T}\left( \Si^{t-s} X, I_s \right)
        \cong \cat{T}\left( \Si^t X, \Si^s I_s \right)
\]
and differentials
\[
d_r \colon E_r^{s,t} \to E_r^{s+r, t+r-1}
\]
given by $d_r = p \circ i^{-(r-1)} \circ \de$, where $i^{-1}$ means choosing an $i$-preimage.
This is called the \Def{Adams spectral sequence} with respect to the injective class $\cat{I}$
abutting to $\cat{T}(\Sigma^{t-s} X, Y)$.

\begin{lem}
The $E_2$ term is given by
\[
E_2^{s,t} = \Ext_{\cat{I}}^{s,t}(X,Y) := \Ext_{\cat{I}}^{s}(\Si^t X,Y)
\]
where $\Ext_{\cat{I}}^{s}(X,Y)$ denotes the $s^{\text{th}}$ derived functor of $\cat{T}(X,-)$ (relative to the injective class $\cat{I}$) applied to the object $Y$.
\end{lem}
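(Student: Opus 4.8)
The plan is to extract an honest $\cat{I}$-injective resolution of $Y$ from the Adams resolution~\eqref{eq:AdamsResolInj}, and then to identify the cochain complex it produces, after applying $\cat{T}(\Si^t X, -)$, with the first page $(E_1^{\bullet,t}, d_1)$. This is exactly dual to the corresponding statement for projective classes in~\cite{Christensen98}*{\S 4}, so one option is to cite that directly; otherwise I would carry out the following steps.

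First I would normalize the suspensions. Since the injective class is stable, each $J^s := \Si^s I_s$ is again injective, and Convention~\ref{co:SuspensionIso} gives $E_1^{s,t} = \cat{T}(\Si^{t-s}X, I_s) \cong \cat{T}(\Si^t X, \Si^s I_s) = \cat{T}(\Si^t X, J^s)$. I would then define the degree-zero map $d^s := \Si^s(\Si p_{s+1} \circ \de_s) \colon J^s \to J^{s+1}$ together with the augmentation $p_0 \colon Y = Y_0 \to I_0 = J^0$. Because consecutive maps in the distinguished triangle $Y_{s+1} \ral{i_s} Y_s \ral{p_s} I_s \ral{\de_s} \Si Y_{s+1}$ compose to zero, one has $\de_{s+1}\circ p_{s+1} = 0$, whence $d^{s+1}\circ d^s = 0$ and $(J^\bullet, d^\bullet)$ is a cochain complex of injectives under $Y$.

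Next I would show that $Y \to J^\bullet$ is a relative $\cat{I}$-injective resolution, i.e.\ that $\cat{T}(-,I)$ carries it to an exact sequence of abelian groups for every $I \in \cat{I}$. The key input is that each $p_s$ is $\cat{I}$-monic: its fiber map is $i_s$, which lies in $\cat{N}$ and is therefore $\cat{I}$-null, so $p_s^* \colon \cat{T}(I_s, I) \to \cat{T}(Y_s, I)$ is surjective. Applying $\cat{T}(-,I)$ to each triangle then yields short exact sequences which splice, along the maps induced by $\de_s$, into the long exact sequence witnessing the desired exactness. This splicing, together with keeping the suspension bookkeeping straight, is the step I expect to require the most care.

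Finally, applying $\cat{T}(\Si^t X, -)$ to $J^\bullet$ gives, under the identification above, precisely the complex $(E_1^{\bullet,t}, d_1)$: indeed $d_1 = p \circ \de = (p_{s+1})_* \circ (\de_s)_*$ corresponds to $(d^s)_*$ after suspending $s$ times. By the comparison theorem of relative homological algebra (the dual of~\cite{Christensen98}*{\S 4}), the relative derived functors $\Ext_{\cat{I}}^s(\Si^t X, Y)$ may be computed from any $\cat{I}$-injective resolution of $Y$; taking $J^\bullet$, we conclude
\[
E_2^{s,t} = H^s\bigl(\cat{T}(\Si^t X, J^\bullet)\bigr) = \Ext_{\cat{I}}^s(\Si^t X, Y) = \Ext_{\cat{I}}^{s,t}(X,Y),
\]
as claimed. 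The only genuine obstacle is verifying relative exactness in the third step; the remaining steps are bookkeeping.
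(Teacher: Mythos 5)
Your proposal is correct and follows exactly the route the paper takes: the paper's entire proof is the one-line assertion that the Adams resolution yields the $\cat{I}$-injective resolution $0 \to Y \ral{p_0} I_0 \ral{(\Si p_1)\de_0} \Si I_1 \to \cdots$, which is precisely your complex $(J^\bullet, d^\bullet)$. Your elaboration of the relative exactness (via $i_s$ being $\cat{I}$-null, hence $p_s$ being $\cat{I}$-monic, and the resulting short exact sequences splicing together) is exactly the verification the paper leaves implicit.
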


\begin{proof}
The Adams resolution of $Y$ yields an $\cat{I}$-injective resolution of $Y$
\begin{equation}\label{eq:InjResol}
\xymatrix @C=\bigcol {
0 \ar[r] & Y \ar[r]^-{p_0} & I_0 \ar[r]^-{(\Si p_1) \de_0} & \Si I_1 \ar[r]^-{(\Si^2 p_2) (\Si \de_1)} & \Si^2 I_2 \ar[r] & \cdots \\
} \qedhere
\end{equation}
\end{proof}

\begin{rem}\label{re:NotGenerate}
We do not assume that the injective class $\cat{I}$ generates, 
i.e., that every non-zero object $X$ admits a non-zero map $X \to I$ to an injective. 
Hence, we do not expect the Adams spectral sequence to be conditionally convergent in general; c.f.~\cite{Christensen98}*{Proposition~4.4}.
\end{rem}

\begin{ex}\label{ex:EBased}
Let $E$ be a commutative (homotopy) ring spectrum. 
A spectrum is called \Def{$E$-injective} if it is a retract of $E \sm W$ for some $W$ \cite{HoveyS99}*{Definition 2.22}. 
A map of spectra $f \colon X \to Y$ is called \Def{$E$-monic} if the map $E \sm f \colon E \sm X \to E \sm Y$ is a split monomorphism. 
The $E$-injective objects and $E$-monic maps form an injective class in the stable homotopy category. 
The Adams spectral sequence associated to 
this injective class
is the \emph{Adams spectral sequence based on $E$-homology}, as described in \cite{Ravenel04}*{Definition~2.2.4}, also called the \emph{unmodified Adams spectral sequence} in \cite{HoveyS99}*{\S 2.2}. Further assumptions are needed in order to identify the $E_2$ term as $\Ext$ groups in $E_*E$-comodules.
\end{ex}

\begin{defn}
The \Def{$\cat{I}$-cohomology} of an object $X$ is the family of abelian groups\break $H^I(X) := \cat{T}(X,I)$ indexed by the injective objects $I \in \cat{I}$.

A \Def{primary operation} in $\cat{I}$-cohomology is a natural transformation $H^I(X) \to H^J(X)$ of functors $\cat{T}^{\opp} \to \Ab$. Equivalently, by the (additive) Yoneda lemma, a primary operation is a map $I \to J$ in $\cat{T}$.
\end{defn}

\begin{ex}
The differential $d_1$ is given by primary operations. More precisely, let $x \in E_1^{s,t}$ be a map $x \colon \Si^{t-s} X \to I_s$. Then $d_1(x) \in E_1^{s+1,t}$ is the composite
\[
\xymatrix{
\Si^{t-s} X \ar[r]^-{x} & I_s \ar[r]^-{\de_s} & \Si Y_{s+1} \ar[r]^-{\Si p_{s+1}} & \Si I_{s+1}. \\
}
\]
In other words, $d_1(x)$ is obtained by applying the primary operation $d_1 := (\Si p_{s+1}) \de_s \colon I_s \to \Si I_{s+1}$ to $x$.
\end{ex}

\begin{prop}
A primary operation $\te \colon I \to J$ appears as $d_1 \colon I_s \circto I_{s+1}$ in some Adams resolution if and only if $\te$ admits an $\cat{I}$-epi -- $\cat{I}$-mono factorization.
\end{prop}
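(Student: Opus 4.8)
\emph{Approach.} The plan is to match the factorization $d_1 = (\Si p_{s+1})\circ\de_s$ that comes out of an Adams resolution with an $\cat{I}$-epi -- $\cat{I}$-mono factorization of $\theta$, using the cofiber/fiber characterizations of $\cat{I}$-epic and $\cat{I}$-monic maps together with the stability of the injective class. For the \emph{only if} direction, suppose $\theta$ appears as $d_1\colon I_s\circto I_{s+1}$, so that under the identification $I=I_s$, $J=\Si I_{s+1}$ we have $\theta=(\Si p_{s+1})\de_s$ with intermediate object $\Si Y_{s+1}$. I would check that $\de_s\colon I_s\to\Si Y_{s+1}$ is $\cat{I}$-epic and that $\Si p_{s+1}\colon\Si Y_{s+1}\to\Si I_{s+1}$ is $\cat{I}$-monic. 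Applying $\Si$ to the defining triangle $\Si^{-1}I_s\ral{\Si^{-1}\de_s}Y_{s+1}\ral{i_s}Y_s\ral{p_s}I_s$ gives a distinguished triangle $I_s\ral{\de_s}\Si Y_{s+1}\ral{\Si i_s}\Si Y_s\to\cdots$, which exhibits $\Si i_s$ as the map to the cofiber of $\de_s$; since $i_s\in\cat{N}$ and $\cat{N}$ is closed under suspension, this map is $\cat{I}$-null, so $\de_s$ is $\cat{I}$-epic. Likewise the map from the fiber of $p_{s+1}$ is $i_{s+1}\in\cat{N}$, so $p_{s+1}$, and hence $\Si p_{s+1}$, is $\cat{I}$-monic.

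\emph{Converse.} For the \emph{if} direction I would build an Adams resolution realizing a given factorization $\theta=m\circ e$, with $e\colon I\to W$ $\cat{I}$-epic and $m\colon W\to J$ $\cat{I}$-monic, as $d_1$ at stage $s=0$. Set $I_0:=I$, $I_1:=\Si^{-1}J$ (injective by stability), $Y_1:=\Si^{-1}W$, $\de_0:=e$, and $p_1:=\Si^{-1}m$. Taking $Y_0:=\Si^{-1}C_e$, where $C_e$ is the cofiber of $e$, the triangle obtained by applying $\Si^{-1}$ to the cofiber triangle $I\ral{e}W\to C_e\to\Si I$ reads $\Si^{-1}I_0\ral{\Si^{-1}e}Y_1\ral{i_0}Y_0\ral{p_0}I_0$, in which $i_0=\Si^{-1}(W\to C_e)$ is $\cat{I}$-null precisely because $e$ is $\cat{I}$-epic. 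For the next stage I let $Y_2$ be the fiber of $p_1=\Si^{-1}m\colon Y_1\to I_1$; as $m$, and hence $p_1$, is $\cat{I}$-monic, the fiber map $i_1\colon Y_2\to Y_1$ lies in $\cat{N}$. From $Y_2$ onward I extend by the standard construction producing an Adams resolution of the object $Y_0$. Then $d_1=(\Si p_1)\de_0=(\Si\Si^{-1}m)\circ e=m\circ e=\theta$, using $\Si\Si^{-1}\cong\id$.

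\emph{Main obstacle.} The forward direction is essentially bookkeeping, so the real content is the converse, and within it the point that the two distinguished triangles forced by the prescribed data are legitimate stages of an Adams resolution, i.e.\ that the maps $i_0$ and $i_1$ genuinely lie in $\cat{N}$. This is exactly where the $\cat{I}$-epic hypothesis on $e$ and the $\cat{I}$-monic hypothesis on $m$ are used. The two stages automatically glue along the shared object $Y_1=\Si^{-1}W$, which is simultaneously the cofiber term of stage $0$ and the chosen source of stage $1$, and the remaining steps are routine applications of Convention~\ref{co:SuspensionIso} and the stability of the injective class.
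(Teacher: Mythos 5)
Your proposal is correct and follows essentially the same route as the paper: the forward direction reads off $\cat{I}$-epic/$\cat{I}$-monic from the triangles of the resolution, and the converse builds the resolution by taking the fiber of the epi part $e$ (your desuspended cofiber triangle of $e$ is the paper's ``fiber of $q$ taken twice''), relabeling $m$ as $\Si p_1$, and continuing with the standard construction from $Y_2$ onward. Your explicit verification that $i_0$ and $i_1$ lie in $\cat{N}$ is exactly the content the paper leaves implicit.
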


\begin{proof}
The condition is necessary by construction. In the factorization $d_1 = (\Si p_{s+1}) \de_s$, the map $\de_s$ is $\cat{I}$-epic while $p_{s+1}$ is $\cat{I}$-monic.

To prove sufficiency, assume given a factorization $\te = iq \colon I \to W \to J$, where $q \colon I \surj W$ is $\cat{I}$-epic and $i \colon W \inj J$ is $\cat{I}$-monic. Taking the fiber of $q$ twice yields the distinguished triangle
\[
\xymatrix{
\Si^{-1} W \ar[r] & Y_0 \ar@{ >->}[r] & I \ar@{->>}[r]^q & W \\
}
\]
which we relabel
\[
\xymatrix{
Y_1 \ar[r]^-{i_0} & Y_0 \ar@{ >->}[r]^-{p_0} & I \ar@{->>}[r]^-{\de_0} & \Si Y_1. \\
}
\]
Relabeling the given map $i \colon W \inj J$ as $\Si p_1 \colon \Si Y_1 \inj \Si I_1$, we can continue the usual construction of an Adams resolution of $Y_0$ as illustrated in Diagram~\eqref{eq:AdamsResolInj}, in which $\te = iq$ appears as the composite $(\Si p_1) \de_0$. Note that by the same argument, for any $s \geq 0$, $\te$ appears as $d_1 \colon I_s \circto I_{s+1}$ in some (other) Adams resolution.
\end{proof}

\begin{ex}
Not every primary operation appears as $d_1$ in an Adams resolution. For example, consider the stable homotopy category with the projective class $\cat{P}$ generated by the sphere spectrum $S = S^0$, that is, $\cat{P}$ consists of retracts of wedges of spheres. The $\cat{P}$-epis (resp. $\cat{P}$-monos) consist of the maps which are surjective (resp. injective) on homotopy groups. The primary  operation $2 \colon S \to S$ does \emph{not} admit a $\cat{P}$-epi -- $\cat{P}$-mono factorization.

Indeed, assume that $2 = iq \colon S \surj W \inj S$ is such a factorization. We will show that this implies $\pi_2 (S/2) = \Z/2 \op \Z/2$, contradicting the known fact $\pi_2 (S/2) = \Z/4$. Here $S/2$ denotes the mod $2$ Moore spectrum, sitting in the cofiber sequence $S \ral{2} S \to S/2$.

By the octahedral axiom applied to the factorization $2 = iq$, there is a diagram
\[
\xymatrix{
S \ar@{=}[d] \ar@{->>}[r]^-{q} & W \ar@{ >->}[d]^{i} \ar[r] & C_q \ar[d]^{\al} \ar@{ >->}[r]^{\de'} & S^1 \ar@{=}[d] \\
S \ar[r]^2 & S \ar@{->>}[d]^{j} \ar[r] & S/2 \ar[d]^{\be} \ar[r]^{\de} & S^1 \\
& C_i \ar@{=}[r] & C_i & \\
}
\]
with distinguished rows and columns. The long exact sequence in homotopy yields $\pi_n C_q = \mbox{}_2 \pi_{n-1} S$, 
where the induced map $\pi_n(\de') \colon \pi_n C_q \to \pi_n S^1$ corresponds to the inclusion $\mbox{}_2 \pi_{n-1} S \inj \pi_{n-1} S$. Likewise, we have $\pi_n C_i = \left( \pi_{n} S \right) / 2$, 
where the induced map $\pi_n(j) \colon \pi_n S \to \pi_n C_i$ corresponds to the quotient map $\pi_{n} S \surj \left( \pi_{n} S \right) / 2$. The defining cofiber sequence $S \ral{2} S \to S/2$ yields the exact sequence
\[
\xymatrix{
\pi_n S \ar[r]^2 & \pi_n S \ar[r] & \pi_n (S/2) \ar[r]^{\pi_n \de} & \pi_{n-1} S \ar[r]^2 & \pi_{n-1} S \\
}
\] 
which in turn yields the short exact sequence
\begin{equation*}%
\xymatrix{
0 \ar[r] & \left( \pi_n S \right) / 2 \ar[r] & \pi_n (S/2) \ar[r]^{\pi_n \de} & \mbox{}_2 \pi_{n-1} S \ar[r] & 0. \\
}
\end{equation*}
The map $\pi_n (\al) \colon \mbox{}_2 \pi_{n-1} S \to \pi_n(S/2)$ is a splitting of this sequence, because of the equality $\pi_n(\de) \pi_n (\al) = \pi_n(\de \al) = \pi_n(\de')$. However, the short exact sequence does not split in the case $n=2$, by the isomorphism $\pi_2(S/2) = \Z/4$. 
For references, see~\cite{Schwede12}*{Proposition II.6.48}, \cite{Schwede10}*{Proposition 4},
and~\cite{MO100272}.
\end{ex}

\section{\texorpdfstring{$3$}{3}-fold Toda brackets}\label{se:3-fold-Toda-brackets}

In this section, we review different constructions of $3$-fold Toda brackets and some of their properties.

\enlargethispage{3pt}
\begin{defn} \label{def:TodaBracket}
Let $X_0 \ral{f_1} X_1 \ral{f_2} X_2 \ral{f_3} X_3$ be a diagram in a triangulated category $\cat{T}$. We define subsets of $\cat{T}(\Si X_0, X_3)$ as follows.
\begin{itemize}
\item The \Def{iterated cofiber Toda bracket} $\lan f_3, f_2, f_1 \ran_{\cc} \subseteq \cat{T}(\Si X_0, X_3)$ consists of all maps $\psi \colon \Si X_0 \to X_3$ that appear in a commutative diagram
\begin{equation} \label{eq:CofCof}
\cxymatrix{
X_0 \ar@{=}[d] \ar[r]^-{f_1} & X_1 \ar@{=}[d] \ar[r] & C_{f_1} \ar[d]^{\phy} \ar[r] & \Si X_0 \ar[d]^{\psi} \\
X_0 \ar[r]^-{f_1} & X_1 \ar[r]^-{f_2} & X_2 \ar[r]^-{f_3} & X_3 \\
}
\end{equation}
where the top row is distinguished.
\item The \Def{fiber-cofiber Toda bracket} $\lan f_3, f_2, f_1 \ran_{\fc} \subseteq \cat{T}(\Si X_0, X_3)$ consists of all composites $\be \circ \Si \al \colon \Si X_0 \to X_3$, where $\al$ and $\be$ appear in a commutative diagram
\begin{equation} \label{eq:FibCof}
\vcenter{
\xymatrix @C=\bigcol {
X_0 \ar[d]_-{\al} \ar[r]^-{f_1} & X_1 \ar@{=}[d] & & \\
\Si^{-1} C_{f_2} \ar[r] & X_1 \ar[r]^-{f_2} & X_2 \ar@{=}[d] \ar[r] & C_{f_2} \ar[d]^-{\be}  \\
& & X_2  \ar[r]^-{f_3} & X_3 \\
}
}
\end{equation}
where the middle row is distinguished.
\item The \Def{iterated fiber Toda bracket} $\lan f_3, f_2, f_1 \ran_{\ff} \subseteq \cat{T}(\Si X_0, X_3)$ consists of all maps $\Si \de \colon \Si X_0 \to X_3$ where $\de$ appears in a commutative diagram
\begin{equation} \label{eq:FibFib}
\vcenter{
\xymatrix @C=\bigcol {
X_0 \ar[d]_{\de} \ar[r]^-{f_1} & X_1 \ar[d]_{\ga} \ar[r]^-{f_2} & X_2 \ar@{=}[d] \ar[r]^-{f_3} & X_3 \ar@{=}[d] \\
\Si^{-1} X_3 \ar[r] & \Si^{-1} C_{f_3} \ar[r] & X_2 \ar[r]^-{f_3} & X_3 \\
}
}
\end{equation}
where the bottom row is distinguished.
\end{itemize}
\end{defn}

\begin{rem}\label{re:3-fold-negation}
In the literature, there are variations of these definitions, which sometimes
differ by a sign.
With the notion of cofiber sequence implicitly used in~\cite{Toda62},
our definitions agree with Toda's.
The Toda bracket also depends on the choice of triangulation.
Given a triangulation, there is an associated negative triangulation whose
distinguished triangles are those triangles whose negatives are distinguished
in the original triangulation (see~\cite{Balmer02}).
Negating a triangulation negates the $3$-fold Toda brackets.
Dan Isaksen has pointed out to us that in the stable homotopy category
there are $3$-fold Toda brackets which are not equal to their own 
negatives.
For example, Toda showed in~\cite{Toda62}*{Section~VI.v, and Theorems~7.4
and~14.1} that the Toda bracket $\lan 2 \sigma, 8, \nu \ran$
has no indeterminacy and contains an element $\zeta$ of order $8$.
We give another example in Example~\ref{ex:negative}.
\end{rem}

The following proposition can be found in \cite{Sagave08}*{Remark 4.5 and Figure 2} and was kindly pointed out by Fernando Muro. It is also proved in \cite{Meier12}*{\S 4.6}. We provide a different proof more in the spirit of this article. In the case of spaces, it was originally proved by Toda \cite{Toda62}*{Proposition 1.7}. 

\begin{prop} \label{TodaBracketsAgree}
The iterated cofiber, fiber-cofiber, and iterated fiber definitions of Toda brackets coincide. More precisely, for any diagram $X_0 \ral{f_1} X_1 \ral{f_2} X_2 \ral{f_3} X_3$ in $\cat{T}$, the following subsets of $\cat{T}(\Si X_0, X_3)$ are equal:
\[
\lan f_3, f_2, f_1 \ran_{\cc} = \lan f_3, f_2, f_1 \ran_{\fc} = \lan f_3, f_2, f_1 \ran_{\ff}.
\]
\end{prop}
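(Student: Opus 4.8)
The plan is to put all three brackets into a common framework, connect them via the octahedral axiom, and dispatch the third by duality. First I would record that each of the three subsets is empty unless $f_2 f_1 = 0$ and $f_3 f_2 = 0$, since each requires extension or lifting data that forces these composites to vanish; so I assume both hold. Next I would check the standard fact that each subset, being traced out as one varies the choices in its defining diagram, is a coset of the indeterminacy subgroup $\mathrm{Ind} = f_3 \circ \cat{T}(\Si X_0, X_2) + \cat{T}(\Si X_1, X_3)\circ \Si f_1 \subseteq \cat{T}(\Si X_0, X_3)$. For the $\cc$-diagram, two extensions $\phy$ of $f_2$ over $C_{f_1}$ differ by a map factoring through the connecting map $c_1 \colon C_{f_1}\to \Si X_0$, producing the $f_3 \circ (-)$ summand, while for fixed $\phy$ the map $\psi$ solving $f_3\phy = \psi c_1$ is unique up to precomposition with $\Si f_1$, producing the other summand; analogous bookkeeping handles $\fc$ and $\ff$. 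Consequently it suffices, for each pair of definitions, to exhibit one choice of data on each side whose output maps coincide, since two cosets of $\mathrm{Ind}$ sharing a representative are equal.

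To prove $\lan f_3,f_2,f_1 \ran_{\fc} = \lan f_3,f_2,f_1 \ran_{\cc}$ I would convert fiber-cofiber data into iterated-cofiber data with the octahedral axiom. Starting from a lift $\al \colon X_0 \to \Si^{-1}C_{f_2}$ of $f_1$ along the fiber map $d_2 \colon \Si^{-1}C_{f_2}\to X_1$, I apply the octahedron to the factorization $f_1 = d_2 \circ \al$. Since $d_2$ sits in the distinguished triangle $\Si^{-1}C_{f_2}\ral{d_2}X_1\ral{f_2}X_2$, its cofiber is $X_2$ with quotient map $f_2$, and the octahedral axiom produces a map $\phy \colon C_{f_1}\to X_2$ with $\phy \circ e_1 = f_2$, i.e. exactly the extension datum for the $\cc$-bracket. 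Tracking the remaining faces of the octahedron, together with an extension $\be \colon C_{f_2}\to X_3$ of $f_3$, shows that the composite $\be \circ \Si \al$ and the map $\psi$ determined by $f_3 \phy = \psi c_1$ coincide (the sign working out to $+1$ for the chosen triangulation), so the two cosets are equal.

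Finally I would obtain $\lan f_3,f_2,f_1 \ran_{\ff} = \lan f_3,f_2,f_1 \ran_{\fc}$ by duality rather than repeating the argument. Passing to $\cTo$ interchanges fibers and cofibers and reverses the order of the maps, so the iterated-fiber bracket $\lan f_3,f_2,f_1 \ran_{\ff}$ in $\cat{T}$ is literally the iterated-cofiber bracket $\lan f_1,f_2,f_3 \ran_{\cc}$ computed in $\cTo$, while the fiber-cofiber bracket is carried to the fiber-cofiber bracket. Applying the already-established identity $\cc = \fc$ in $\cTo$ and translating back along $\cTo(\Si^{-1}X_3, X_0) = \cat{T}(X_0, \Si^{-1}X_3) \cong \cat{T}(\Si X_0, X_3)$ then gives the claim; alternatively one runs the mirror-image octahedral construction of the previous step. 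This uses only the unwinding of definitions under $(-)^{\opp}$, not the later self-duality results.

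The main obstacle I expect is not the existence of the connecting maps — the octahedral axiom supplies those — but the careful tracking of signs and of the connecting morphisms through the rotated triangles, so that the outputs match on the nose and not merely up to sign. This is exactly the delicacy flagged in Remark~\ref{re:3-fold-negation}: the definitions were arranged to agree with Toda's conventions, and confirming that the octahedral compatibilities reproduce those signs, rather than their negatives, is the crux. The subsidiary indeterminacy computations in the first step, while routine, must also be carried out consistently for all three brackets.
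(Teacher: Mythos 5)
Your proposal is correct, but it takes a genuinely different route from the paper's for the main equality $\lan f_3,f_2,f_1\ran_{\cc}=\lan f_3,f_2,f_1\ran_{\fc}$. The paper proves both inclusions elementwise: $\fc\subseteq\cc$ is a direct fill-in of a morphism of triangles, while $\cc\subseteq\fc$ is the delicate step, using the octahedron on the factorization $f_2=\phy q_1$, an exactness argument to produce $\be$ with $\psi=\be(\Si\al)$, and then a correction of $\be$ by a term factoring through $C_\phy$ so that it genuinely extends $f_3$. You sidestep that correction argument entirely by first checking that all three subsets are nonempty under the same conditions and are full cosets of the common indeterminacy subgroup (a per-definition strengthening of Lemma~\ref{le:indeterminacy}, which the paper leaves as an exercise and does not use in its proof), and then exhibiting one shared element via the octahedron applied to the other factorization $f_1=(\text{fiber map})\circ\al$. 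That step does work, and the compatibility you should make explicit is the commuting square $q_2\,\phy=(\Si\al)\,\io_1$ supplied by the octahedron (with $q_2\colon X_2\to C_{f_2}$ and $\io_1\colon C_{f_1}\to\Si X_0$), which gives $f_3\phy=\be q_2\phy=\be(\Si\al)\io_1$ on the nose and hence $\be(\Si\al)\in\lan f_3,f_2,f_1\ran_{\cc}$ with no residual sign ambiguity; this is where your flagged sign worry is actually discharged. The trade-off is that your route front-loads three routine but genuine indeterminacy computations, whereas the paper's is self-contained at the cost of the correction-term manipulation; your disposal of the iterated-fiber bracket by passing to $\cTo$ matches the paper, which likewise declares that equality dual.
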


\begin{proof}
We will prove the first equality; the second equality is dual.

($\supseteq$) Let $\be (\Si \al) \in \lan f_3, f_2, f_1 \ran_{\fc}$ be obtained from maps $\al$ and $\be$ as in Diagram \eqref{eq:FibCof}. Now consider the diagram with distinguished rows
\[
\xymatrix{
X_0 \ar[d]^{\al} \ar[r]^-{f_1} & X_1 \ar@{=}[d] \ar[r] & C_{f_1} \ar@{-->}[d]^{\phy} \ar[r] & \Si X_0  \ar[d]^{\Si \al} \\
\Si^{-1} C_{f_2} \ar[r] & X_1 \ar[r]^-{f_2} & X_2 \ar@{=}[d] \ar[r] & C_{f_2} \ar[d]^{\be} \\
& & X_2 \ar[r]^-{f_3} & X_3 \\
}
\]
where there exists a filler $\phy \colon C_{f_1} \to X_2$. The commutativity of the tall rectangle on the right exhibits the membership $\be (\Si \al) \in \lan f_3, f_2, f_1 \ran_{\cc}$.

($\subseteq$) Let $\psi \in \lan f_3, f_2, f_1 \ran_{\cc}$ be as in Diagram \eqref{eq:CofCof}. The octahedral axiom comparing the cofibers of $q_1$, $\phy$, and $\phy \circ q_1 = f_2$ yields a commutative diagram
\[
\xymatrix @C=1.1pc @R=0.88pc {
&& && \Si^{-1} C_{\phy} \ar[dd]_{- \Si^{-1} \io} \ar@{=}[rr] & & \Si^{-1} C_{\phy} \ar[dd]_{- \Si^{-1} \eta} & \\ \\
X_0 \ar[dd]_{\al} \ar[rr]^-{f_1} && X_1 \ar@{=}[dd] \ar[rr]^-{q_1} && C_{f_1} \ar[dd]_{\phy} \ar[rr]^-{\io_1} & & \Si X_0  \ar[dd]_{\Si \al} \ar@/_1pc/[dddl]_(0.35){\psi} \ar[rr]^-{- \Si f_1} && \Si X_1 \ar@{=}[dd] \\ \\
\Si^{-1} C_{f_2} \ar[rr]^(0.52){- \Si^{-1} \io_2} && X_1 \ar[rr]^-{f_2} && X_2 \ar[dd]_{q} \ar[dr]_{f_3} \ar[rr]^(0.4){q_2} & & C_{f_2} \ar[dd]^{\xi} \ar@{-->}[dl]^{\!\be} \ar[rr]^{\io_2} && \Si X_1 \\
&& && & X_3 & & \\
&& && C_{\phy} \ar@{-->}[ur]^-{\te} \ar@{=}[rr] & & C_{\phy}, & \\
}
\]
where the rows and columns are distinguished. By exactness of the sequence
\[
\xymatrix @C=\bigcol {
\cat{T}(C_{f_2}, X_3) \ar[r]^-{(\Si \al)^*} & \cat{T}(\Si X_0, X_3) \ar[r]^-{(- \Si^{-1} \eta)^*} & \cat{T}(\Si^{-1} C_{\phy}, X_3)
}
\]
there exists a map $\be \colon C_{f_2} \to X_3$ satisfying $\psi = \be (\Si \al)$ if and only if the restriction of $\psi$ to the fiber $\Si^{-1} C_{\phy}$ of $\Si \al$ is zero. That condition does hold: one readily checks the equality $\psi (- \Si^{-1} \eta) = 0$.
The chosen map $\be \colon C_{f_2} \to X_3$ might \emph{not} satisfy the equation $\be q_2 = f_3$, but we will correct it to another map $\be'$ which does. The error term $f_3 - \be q_2$ is killed by restriction along $\phy$, %
and therefore factors through the cofiber of $\phy$, i.e., there exists a factorization
\[
f_3 - \be q_2 = \te \io
\]
for some $\te \colon C_{\phy} \to X_3$. The corrected map $\be' := \be + \te \xi \colon C_{f_2} \to X_3$ satisfies $\be' q_2 = f_3$. 
Moreover, this corrected map $\be'$ still satisfies $\be' (\Si \al) = \psi = \be (\Si \al)$, since the correction term satisfies $\te \xi (\Si \al) = 0$. %
\end{proof}

Thanks to the proposition, we can write $\lan f_3, f_2, f_1 \ran$ if we 
do not need to specify a particular definition of the Toda bracket.

We also recall this well-known fact, and leave the proof as an exercise:

\begin{lem}\label{le:indeterminacy}
For any diagram $X_0 \ral{f_1} X_1 \ral{f_2} X_2 \ral{f_3} X_3$ in $\cat{T}$,
the subset $\lan f_3, f_2, f_1 \ran$ of $\cat{T}(\Si X_0, X_3)$ is a coset of
the subgroup
\[
  (f_3)_* \, \cat{T}(\Si X_0, X_2) + (\Si f_1)^* \, \cat{T}(\Si X_1, X_3) .
\vspace{-18pt}
\]
\qed
\end{lem}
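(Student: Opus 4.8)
The plan is to establish the two containments that the coset description packages together: first that $\lan f_3, f_2, f_1 \ran$ is nonempty and closed under the indicated translations, and second that any two of its elements differ by an element of the subgroup $G := (f_3)_* \, \cat{T}(\Si X_0, X_2) + (\Si f_1)^* \, \cat{T}(\Si X_1, X_3)$. For concreteness I would work with the iterated cofiber definition $\lan f_3, f_2, f_1 \ran_{\cc}$, since Proposition~\ref{TodaBracketsAgree} lets me pick whichever model is most convenient. Recall that there $\psi$ arises from a filler $\phy \colon C_{f_1} \to X_2$ with $\phy \circ q_1 = f_2$ (where $q_1 \colon X_1 \to C_{f_1}$ is the cofiber map), and $\psi \colon \Si X_0 \to X_3$ is the induced map on cofibers, i.e.\ $\psi \circ \io_1 = f_3 \circ \phy$ where $\io_1 \colon C_{f_1} \to \Si X_0$. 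Nonemptiness comes from the fact that $f_2 \circ f_1 = 0$ (implicit in the bracket being defined), so $f_2$ factors through $C_{f_1}$, producing at least one $\phy$ and hence one $\psi$.

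The main work is the coset computation, and I would organize it around the defining triangle $X_1 \ral{q_1} C_{f_1} \ral{\io_1} \Si X_0 \ral{-\Si f_1} \Si X_1$. For the two directions I would proceed as follows. First, \emph{varying $\phy$ for fixed $\psi$-data}: two fillers $\phy, \phy'$ both satisfy $\phy q_1 = \phy' q_1 = f_2$, so their difference kills $q_1$ and therefore factors through $\io_1$, giving $\phy' - \phy = \chi \circ \io_1$ for some $\chi \colon \Si X_0 \to X_2$. Chasing this through the induced maps on $\Si X_0$ shows the corresponding bracket elements differ by $f_3 \circ \chi \in (f_3)_* \, \cat{T}(\Si X_0, X_2)$, which is exactly the first summand of $G$. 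Second, the \emph{$(\Si f_1)^*$ summand}: precomposing any $\psi$ with $\Si f_1 \colon \Si X_0 \to \Si X_1$ — i.e.\ adding $g \circ \Si f_1$ for $g \colon \Si X_1 \to X_3$ — still yields a valid bracket element, because $\io_1 $ composed with $-\Si f_1$ is zero in the triangle, so the correction term vanishes on $\io_1$ and does not disturb the relation $\psi \io_1 = f_3 \phy$. Conversely, any two bracket elements sharing the same $\phy$ differ by a map $\Si X_0 \to X_3$ that vanishes on $\io_1$, hence (by exactness of $\cat{T}(-, X_3)$ applied to the triangle) factors through $-\Si f_1$, landing in $(\Si f_1)^* \, \cat{T}(\Si X_1, X_3)$.

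Putting these together: any element can be moved to any other by first adjusting $\phy$ (accounting for the $(f_3)_*$ part) and then adjusting the lift on $\Si X_0$ (accounting for the $(\Si f_1)^*$ part), which shows the difference lies in $G$; and conversely both generating types of translation preserve membership, so $\lan f_3, f_2, f_1 \ran$ is stable under adding any element of $G$. A nonempty subset stable under translation by a subgroup $G$, any two of whose elements differ by an element of $G$, is precisely a coset of $G$.

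I expect the main obstacle to be bookkeeping the sign and the exactness arguments cleanly — specifically, verifying that the corrections made when varying $\phy$ land in the $(f_3)_*$ summand rather than leaking into the other summand, and keeping straight the $-\Si f_1$ sign coming from the rotated triangle. The conceptual content is entirely the long exact sequences $\cat{T}(-, X_2)$ and $\cat{T}(-, X_3)$ applied to the triangle on $C_{f_1}$; the risk is purely in matching up which factorization produces which summand and confirming that the two translation types are independent enough to generate all of $G$ while remaining within the bracket.
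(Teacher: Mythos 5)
Your argument is correct and is the standard one; note that the paper deliberately gives no proof of this lemma (it is stated with a \verb|\qed| and explicitly ``left as an exercise''), so there is nothing to compare against except the expected folklore argument, which is exactly what you reconstruct: fix the triangle $X_1 \ral{q_1} C_{f_1} \ral{\io_1} \Si X_0 \ral{-\Si f_1} \Si X_1$ and apply $\cat{T}(-,X_2)$ and $\cat{T}(-,X_3)$ to see that varying the filler $\phy$ changes $\psi$ by $(f_3)_*\,\cat{T}(\Si X_0, X_2)$ modulo the other summand, and varying $\psi$ for fixed $\phy$ changes it by $(\Si f_1)^*\,\cat{T}(\Si X_1, X_3)$; your final assembly of the two translation types into the full coset statement is sound.

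One small point deserves tightening. Your nonemptiness claim only invokes $f_2 f_1 = 0$ to produce the filler $\phy$, and then asserts ``and hence one $\psi$.'' That last step is not automatic: given $\phy$ with $\phy q_1 = f_2$, the map $\psi$ extending $f_3\phy$ over $\io_1$ exists if and only if $f_3 \phy q_1 = f_3 f_2 = 0$, by exactness of $\cat{T}(-,X_3)$ applied to the same triangle. So nonemptiness requires \emph{both} composites $f_2 f_1$ and $f_3 f_2$ to vanish, and indeed the bracket is empty (hence not literally a coset) whenever either composite is nonzero --- an implicit hypothesis in the lemma as stated that your write-up should make explicit rather than fold into ``implicit in the bracket being defined.'' This does not affect the coset computation itself, which is the substance of the lemma.
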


The displayed subgroup is called the \Def{indeterminacy}, and when
it is trivial, we say that the Toda bracket \Def{has no indeterminacy}.

\begin{lem}\label{le:MoveAround}
Consider maps $X_0 \ral{f_1} X_1 \ral{f_2} X_2 \ral{f_3} X_3 \ral{f_4} X_4$. Then the following inclusions of subsets of $\cat{T}(\Si X_0, X_4)$ hold.
\begin{enumerate}
\item
\[
f_4 \lan f_3, f_2, f_1 \ran \subseteq \lan f_4 f_3, f_2, f_1 \ran
\]
\item
\[
\lan f_4, f_3, f_2 \ran f_1 \subseteq \lan f_4, f_3, f_2 f_1 \ran
\]
\item
\[
\lan f_4 f_3, f_2, f_1 \ran \subseteq \lan f_4, f_3 f_2, f_1 \ran
\]
\item
\[
\lan f_4, f_3, f_2 f_1 \ran \subseteq \lan f_4, f_3 f_2, f_1 \ran.
\]
\end{enumerate}
\end{lem}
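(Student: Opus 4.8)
The plan is to prove each of the four inclusions by passing, via Proposition~\ref{TodaBracketsAgree}, to whichever of the three models of the $3$-fold bracket (iterated cofiber, fiber-cofiber, iterated fiber) makes the manipulation transparent, and then modifying a single piece of the defining data. Before starting, one checks that the vanishing hypotheses needed for the \emph{target} bracket to be well-posed are inherited from the source: in (1) the relation $f_3 f_2 = 0$ forces $(f_4 f_3) f_2 = 0$; in (3) the relations $f_2 f_1 = 0$ and $(f_4 f_3) f_2 = 0$ give $(f_3 f_2) f_1 = 0$ and $f_4 (f_3 f_2) = 0$; and (2), (4) are analogous. Thus in each case the target diagram is meaningful.

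For (1) and (3) I would use the iterated cofiber model of Diagram~\eqref{eq:CofCof}, writing the distinguished top row as $X_1 \ral{q_1} C_{f_1} \ral{\io_1} \Si X_0$. Given $\psi \in \lan f_3, f_2, f_1 \ran_{\cc}$ realized by a filler $\phy \colon C_{f_1} \to X_2$ with $\phy q_1 = f_2$ and $\psi \io_1 = f_3 \phy$, inclusion (1) is immediate: post-composing the bottom row with $f_4$ leaves $\phy$ unchanged and replaces $\psi$ by $f_4 \psi$, and the relation $f_4 \psi \, \io_1 = (f_4 f_3) \phy$ exhibits $f_4 \psi \in \lan f_4 f_3, f_2, f_1 \ran_{\cc}$. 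For inclusion (3), start instead from $\psi \in \lan f_4 f_3, f_2, f_1 \ran_{\cc}$ with filler $\phy \colon C_{f_1} \to X_2$, and replace the filler by $f_3 \phy \colon C_{f_1} \to X_3$ while keeping $\psi$. The middle square still commutes since $(f_3 \phy) q_1 = f_3 f_2$, and the right square still commutes since $\psi \io_1 = (f_4 f_3) \phy = f_4 (f_3 \phy)$; hence $\psi \in \lan f_4, f_3 f_2, f_1 \ran_{\cc}$.

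Inclusions (2) and (4) are the respective duals of (1) and (3): passing to $\cat{T}^{\opp}$ turns post-composition into pre-composition and, by Definition~\ref{def:TodaBracket}, turns the iterated cofiber model into the iterated fiber model of Diagram~\eqref{eq:FibFib}. Concretely, for (2) one takes $\Si \de \in \lan f_4, f_3, f_2 \ran_{\ff}$ with data $\de \colon X_1 \to \Si^{-1} X_4$ and $\ga \colon X_2 \to \Si^{-1} C_{f_4}$, and precomposes with $f_1$, keeping $\ga$ and replacing $\de$ by $\de f_1$; for (4) one starts from $\Si \de \in \lan f_4, f_3, f_2 f_1 \ran_{\ff}$ and replaces the datum $\ga \colon X_2 \to \Si^{-1} C_{f_4}$ by $\ga f_2 \colon X_1 \to \Si^{-1} C_{f_4}$ while keeping $\de$. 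In each case the two commuting squares of Diagram~\eqref{eq:FibFib} are verified exactly as above.

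There is no serious obstacle here; the content is bookkeeping. The only points requiring care are confirming that the replaced datum genuinely satisfies \emph{both} commutativity conditions of the relevant model (and not merely the one it was designed for), and tracking the signs and suspension identifications built into Definition~\ref{def:TodaBracket} so that the duality pairing (1) with (2) and (3) with (4) is applied correctly.
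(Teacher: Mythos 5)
Your proof is correct and follows essentially the same route as the paper: the paper likewise treats (1)--(2) as straightforward, proves (3) in the iterated cofiber model by replacing the filler $\phy$ with $f_3\phy$, and handles (4) dually in the iterated fiber model. (The opening remarks about the target brackets being ``well-posed'' are unnecessary, since the brackets are defined as possibly empty subsets for arbitrary diagrams, but this does not affect the argument.)
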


\begin{proof}
(1)-(2) These inclusions %
are straightforward.

(3)-(4) Using the iterated cofiber definition, the subset $\lan f_4 f_3, f_2, f_1 \ran_{\cc}$ consists of the maps $\psi \colon \Si X_0 \to X_4$ appearing in a commutative diagram
\[
\xymatrix{
X_0 \ar@{=}[d] \ar[r]^-{f_1} & X_1 \ar@{=}[d] \ar[r] & C_{f_1} \ar[d]^{\phy} \ar[rr] & & \Si X_0 \ar[d]^{\psi} \\
X_0 \ar[r]^-{f_1} & X_1 \ar[r]^-{f_2} & X_2 \ar[r]^-{f_3} & X_3 \ar[r]^-{f_4} & X_4 \\
}
\]
where the top row is distinguished. Given such a diagram, the diagram
\[
\xymatrix{
X_0 \ar@{=}[d] \ar[r]^-{f_1} & X_1 \ar@{=}[d] \ar[r] & C_{f_1} \ar[dr]^{f_3 \phy} \ar[rr] & & \Si X_0 \ar[d]^{\psi} \\
X_0 \ar[r]^-{f_1} & X_1 \ar[r]^-{f_2} & X_2 \ar[r]^-{f_3} & X_3 \ar[r]^-{f_4} & X_4 \\
}
\]
exhibits the membership $\psi \in \lan f_4, f_3 f_2, f_1 \ran_{\cc}$. A similar argument can be used to prove the inclusion $\lan f_4, f_3, f_2 f_1 \ran_{\ff} \subseteq \lan f_4, f_3 f_2, f_1 \ran_{\ff}$.
\end{proof}

\begin{ex}
The inclusion $\lan f_4 f_3, f_2, f_1 \ran \subseteq \lan f_4, f_3 f_2, f_1 \ran$ need not be an equality. For example, consider the maps $X \ral{0} Y \ral{1} Y \ral{0} Z \ral{1} Z$. The Toda brackets being compared are
\begin{align*}
\lan 1_Z 0, 1_Y, 0 \ran &= \lan 0, 1_Y, 0 \ran
= \left\{ 0 \right\} \\
\lan 1_Z, 0 1_Y, 0 \ran &= \lan 1_Z, 0, 0 \ran
= \cat{T}(\Si X, Z).
\end{align*}
\end{ex}

\begin{defn}
In the setup of Definition \ref{def:TodaBracket}, the \Def{restricted Toda brackets} are the subsets of the Toda bracket
\begin{align*}
&\lan f_3, \stackrel{\al}{f_2, f_1} \ran_{\fc} \subseteq \lan f_3, f_2, f_1 \ran_{\fc} \\
&\lan \stackrel{\be}{f_3, f_2}, f_1 \ran_{\fc} \subseteq \lan f_3, f_2, f_1 \ran_{\fc}
\end{align*}
consisting of all composites $\be (\Si \al) \colon \Si X_0 \to X_3$, where $\al$ and $\be$ appear in a commutative diagram \eqref{eq:FibCof} where the middle row is distinguished, with the prescribed map $\al \colon X_0 \to \Si^{-1} C_{f_2} $ (resp. $\be \colon C_{f_2} \to X_3$).
\end{defn}

The lift to the fiber $\al \colon X_0 \to \Si^{-1} C_{f_2}$ is a witness of the equality $f_2 f_1 = 0$. Dually, the extension to the cofiber $\be \colon C_{f_2} \to X_3$ is a witness of the equality $f_3 f_2 = 0$.

\begin{rem} \label{ComposeWitness}
Let $X_1 \ral{f_2} X_2 \ral{q_2} C_{f_2} \ral{\io_2} \Si X_1$ be a distinguished triangle. By definition, we have equalities of subsets
\begin{align*}
&\lan f_3, \stackrel{\al}{f_2, f_1} \ran_{\fc} = \lan f_3, \stackrel{1}{f_2, {-}}\!\! \Si^{-1} \io_2 \ran_{\fc} (\Si \al) \\
&\lan \stackrel{\be}{f_3, f_2}, f_1 \ran_{\fc}  = \be \lan \stackrel{1}{q_2, f_2}, f_1 \ran_{\fc}.
\end{align*}
\end{rem}

\section{Adams \texorpdfstring{$d_2$}{d2} in terms of \texorpdfstring{$3$}{3}-fold Toda brackets}\label{se:AdamsD2}

In this section, we show that the Adams differential $d_r$ can be expressed in several ways 
using $3$-fold Toda brackets.  One of these expressions is as a secondary cohomology operation.

Given an injective class $\cat{I}$,
an Adams resolution of an object $Y$ as in Diagram~\eqref{eq:AdamsResolInj}, and an object $X$, consider a class $[x] \in E_2^{s,t}$ represented by a cycle $x \in E_1^{s,t} = \cat{T}(\Si^{t-s} X, I_s)$. Recall that $d_2 [x] \in E_2^{s+2,t+1} $ is obtained as illustrated in the diagram 
\begin{equation*}
\xymatrix{
\cdots & Y_s \ar[l] \ar@{ >->}[dr]_{p_s} & & Y_{s+1} \ar@{ >->}[dr]_{p_{s+1}} \ar[ll]_{i_s} & & Y_{s+2} \ar@{ >->}[dr]_{p_{s+2}} \ar[ll]_{i_{s+1}} & & Y_{s+3} \ar[ll]_{i_{s+2}} & \cdots \ar[l] \\
& & I_s \circar[ur]_{\de_s} & & I_{s+1} \circar[ur]_{\de_{s+1}} & & I_{s+2} \circar[ur]_{\de_{s+2}} & & \\
& & \Si^{t-s} X \ar[u]_x \ar@/^0.5pc/@{-->}[uurrr]_(0.4){\tild{x}} \ar@/_0.5pc/[urrrr]_{d_2(x)} & & & & & & \\
}
\end{equation*}%
Explicitly, since $x$ satisfies $d_1(x) = (\Si p_{s+1}) \de_s x = 0$, we can choose a lift $\tild{x} \colon \Si^{t-s} X \circto \Si Y_{s+2}$ of $\de_s x$ to the fiber of $\Si p_{s+1}$. Then the differential $d_2$ is given by 
\[
d_2 [x] = \left[ (\Si p_{s+2}) \tild{x} \right].
\]

From now on, we will unroll the distinguished triangles and keep track of the suspensions. %
Following Convention \ref{co:SuspensionIso}, we will use the identifications
\[
E_1^{s+2,t+1} = \cat{T}(\Si^{t-s-1} X, I_{s+2}) \cong \cat{T}(\Si^{t-s} X, \Si I_{s+2}) \cong \cat{T}(\Si^{t-s+1} X, \Si^2 I_{s+2}).
\]

\begin{prop} \label{pr:DifferentD2}
Denote by $d_2 [x] \subseteq E_1^{s+2,t+1}$ the subset of all representatives of the class $d_2 [x] \in E_2^{s+2,t+1}$. Then the following equalities hold:
\begin{enumerate}
\item\label{it:d1pdex}
\begin{align*}
d_2 [x] &= \lan \stackrel{\Si^2 p_{s+2}}{\Si d_1,\strut \Si p_{s+1}}, \de_s x \ran_{\fc} \\
&= \lan \Si d_1, \Si p_{s+1}, \de_s x \ran
\end{align*}
\item
\begin{align*}
d_2 [x] &= (\Si^2 p_{s+2}) \lan \stackrel{\!\!1}{\Si \de_{s+1}, \Si p_{s+1}}, \de_s x \ran_{\fc} \\
&= (\Si^2 p_{s+2}) \lan \Si \de_{s+1}, \Si p_{s+1}, \de_s x \ran
\end{align*}
\item\label{it:d1d1x}
\[
d_2 [x] = \lan \stackrel{\ \be}{\Si d_1, d_1}, x \ran_{\fc} ,
\]
where $\be$ is the composite $C \ral{\tild{\be}} \Si^{2} Y_{s+2} \ral{\Si^2 p_{s+2}} \Si^{2} I_{s+2}$ and $\tild{\be}$ is obtained from the octahedral axiom applied to the factorization $d_1 = (\Si p_{s+1}) \de_{s} \colon I_s \to \Si Y_{s+1} \to \Si I_{s+1}$.
\end{enumerate}
\end{prop}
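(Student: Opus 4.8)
The plan is to prove the three items in order, treating \eqref{it:d1pdex} as the base case, deriving (2) from it formally, and handling \eqref{it:d1d1x} by a separate octahedral comparison. Throughout I use the suspension identifications fixed before the proposition, and the fact, recorded in Lemma~\ref{le:indeterminacy}, that every (restricted) Toda bracket is a coset of an explicit subgroup. For item~\eqref{it:d1pdex}, I would first unwind the fiber-cofiber definition of $\lan\stackrel{\Si^2 p_{s+2}}{\Si d_1,\Si p_{s+1}},\de_s x\ran_{\fc}$ and match its data to the construction of $d_2$. Since $C_{\Si p_{s+1}}\cong\Si^2 Y_{s+2}$ with cofiber map $\Si\de_{s+1}$, the fiber $\Si^{-1}C_{\Si p_{s+1}}\cong\Si Y_{s+2}$, and a lift $\al\colon\Si^{t-s}X\to\Si Y_{s+2}$ of $\de_s x$ along $\Si i_{s+1}$ is precisely a choice of $\tild x$ as in the definition of $d_2[x]$. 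With $\be=\Si^2 p_{s+2}$ prescribed, the bracket element $\be(\Si\al)$ is $\Si$ applied to the representative $(\Si p_{s+2})\tild x$, so the restricted bracket and $d_2[x]$ share a common element. Varying $\al$ over all lifts (a torsor under $(\de_{s+1})_*$) shows the restricted bracket is a coset of $\mathrm{im}\bigl(d_1\colon E_1^{s+1,t+1}\to E_1^{s+2,t+1}\bigr)$, which is exactly the indeterminacy of $d_2[x]$ as a subset of $E_1^{s+2,t+1}$; sharing an element, they coincide.

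For the second equality in~\eqref{it:d1pdex}, I would show the restricted bracket already equals the full bracket $\lan\Si d_1,\Si p_{s+1},\de_s x\ran$ by a comparison of indeterminacies via Lemma~\ref{le:indeterminacy}. The first term of the full indeterminacy is again $\mathrm{im}(d_1)$, so it suffices to kill the second term $(\Si\de_s x)^*\,\cat{T}(\Si^2 Y_{s+1},\Si^2 I_{s+2})$. The key point is that $\de_s x$ is $\cat{I}$-null: writing $\de_s x=\Si i_{s+1}\circ\tild x$ gives $\Si(\de_s x)=\Si^2 i_{s+1}\circ\Si\tild x$, and since $i_{s+1}\in\cat{N}$ and $\Si^2 I_{s+2}$ is injective, every $\psi\colon\Si^2 Y_{s+1}\to\Si^2 I_{s+2}$ satisfies $\psi\circ\Si^2 i_{s+1}=0$. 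Hence the second term vanishes, the full bracket has indeterminacy $\mathrm{im}(d_1)$, and as it contains the restricted bracket and both are cosets of the same group, they are equal. Item~(2) is then a formal consequence: by Remark~\ref{ComposeWitness}, since $\Si\de_{s+1}$ is the cofiber map of $f_2=\Si p_{s+1}$, we get $\lan\stackrel{\Si^2 p_{s+2}}{\Si d_1,\Si p_{s+1}},\de_s x\ran_{\fc}=(\Si^2 p_{s+2})\lan\stackrel{1}{\Si\de_{s+1},\Si p_{s+1}},\de_s x\ran_{\fc}$, while for the full-bracket form Lemma~\ref{le:MoveAround}(1) yields $(\Si^2 p_{s+2})\lan\Si\de_{s+1},\Si p_{s+1},\de_s x\ran\subseteq d_2[x]$, the reverse inclusion holding because the left side already contains the restricted bracket.

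For item~\eqref{it:d1d1x} the middle map is $d_1=(\Si p_{s+1})\de_s$ itself, so the cofibers no longer match on the nose and the octahedral axiom must intervene; this is the main obstacle. Applying the octahedron to $d_1=(\Si p_{s+1})\de_s$ produces the comparison map $\tild\be\colon C_{d_1}\to C_{\Si p_{s+1}}=\Si^2 Y_{s+2}$ together with two identities I would extract from the resulting diagram (a reprise of the octahedral diagram in the proof of Proposition~\ref{TodaBracketsAgree}): $\tild\be\circ q=\Si\de_{s+1}$, where $q$ is the cofiber map of $d_1$, and $\Si i_{s+1}\circ\Si^{-1}\tild\be=\de_s\circ j$, where $j\colon\Si^{-1}C_{d_1}\to I_s$ is the fiber inclusion. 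The first identity shows $\be:=(\Si^2 p_{s+2})\tild\be$ is a genuine extension witnessing $\Si d_1\circ d_1=0$, since $\be q=(\Si^2 p_{s+2})(\Si\de_{s+1})=\Si d_1$. The second shows that for any lift $\al\colon\Si^{t-s}X\to\Si^{-1}C_{d_1}$ of $x$ (note $d_1 x=0$ as $x$ is a cycle), the composite $\Si^{-1}\tild\be\circ\al$ is a lift $\tild x$ of $\de_s x$, whence $\be(\Si\al)=(\Si^2 p_{s+2})(\Si\tild x)$ recovers a representative of $d_2[x]$. Running $\al$ over all lifts again produces a coset of $\mathrm{im}(d_1)$ (using $\tild\be\circ q=\Si\de_{s+1}$ to compute the indeterminacy), so the restricted bracket $\lan\stackrel{\be}{\Si d_1,d_1},x\ran_{\fc}$ coincides with $d_2[x]$. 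I expect the careful tracking of signs and of the octahedral comparison maps needed to pin down these two identities to be the one genuinely delicate part of the argument.
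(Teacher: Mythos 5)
Your proposal is correct and shares the paper's skeleton---item~(1) by matching the fiber--cofiber data to the definition of $d_2$, item~(2) via Remark~\ref{ComposeWitness}, item~(3) via the octahedron applied to $d_1=(\Si p_{s+1})\de_s$---but it substitutes a coset argument based on Lemma~\ref{le:indeterminacy} for the two most technical steps of the paper's proof, and this is worth recording. For the second equality in~(1), the paper observes that the extension $\be$ is \emph{unique} (since $\Si \de_{s+1}$ is $\cat{I}$-epic and $\Si^2 I_{s+2}$ is injective, $(\Si\de_{s+1})^*$ is injective on maps into $\Si^2 I_{s+2}$), so the restricted bracket literally \emph{is} the full bracket; your route---killing the $(\Si(\de_s x))^*$ part of the indeterminacy because $\de_s x$ factors through the $\cat{I}$-null map $\Si i_{s+1}$---is the same observation viewed from the other side, and equally valid. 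The substantive difference is in~(3): the paper proves $d_2[x]\subseteq\lan \stackrel{\ \be}{\Si d_1, d_1}, x \ran_{\fc}$ by explicitly lifting $\Si\tild{x}$ through $\tild{\be}$ and then correcting the lift by a term factoring through $\tild{\al}$ so that it satisfies $\io(\Si\al)=-\Si x$; you instead prove only the reverse containment directly and then note that both sides are nonempty cosets of $\mathrm{im}(d_1)$ (using $\be q=\Si d_1$, i.e.\ $\tild{\be}q=\Si\de_{s+1}$, to identify the indeterminacy of the restricted bracket), so that agreeing in one element forces equality. This genuinely bypasses the correction-term argument, at the cost of having to verify the coset structure on both sides. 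The one point you rightly flag as delicate---the sign in the octahedral identity $(\Si i_{s+1})(\Si^{-1}\tild{\be})=\de_s j$---does matter: with the wrong sign your composite would lift $-\de_s x$ and the coset argument would identify the restricted bracket with $-d_2[x]$ instead of $d_2[x]$. The paper's octahedron, which carries the relation $(-\Si^2 i_{s+1})\tild{\be}=(\Si\de_s)\io$ together with the convention $\io(\Si\al)=-\Si x$, confirms your identity with the stated sign, so the argument closes.
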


In~\eqref{it:d1d1x}, $\beta$ is a witness to the fact that the composite $(\Si d_1) d_1$ 
of primary operations is zero, and so the restricted Toda bracket is a secondary operation.

\begin{proof}
Note that $t$ plays no role in the statement, so we will assume without loss of generality that $t=s$ holds.
\smallskip

(1) The first equality holds by definition of $d_2 [x]$, namely choosing a lift of $\de_s x$ to the fiber of $\Si p_{s+1}$. 
The second equality follows from the fact that $\Si^2 p_{s+2}$ is the \emph{unique} extension of $\Si d_1 = (\Si^2 p_{s+2}) (\Si \de_{s+1})$ to the cofiber of $\Si p_{s+1}$. Indeed, $\Si \de_{s+1}$ is $\cat{I}$-epic and $\Si I_{s+2}$ is injective, so that the restriction map
\[
(\Si \de_{s+1})^* \colon \cat{T}(\Si^2 Y_{s+2}, \Si^2 I_{s+2}) \to \cat{T}(\Si I_{s+1}, \Si^2 I_{s+2})
\]
is injective.
\smallskip

(2) The first equality holds by Remark \ref{ComposeWitness}. The second equality holds because $\Si \de_{s+1}$ is $\cat{I}$-epic and $\Si I_{s+2}$ is injective, as in part (1).
\smallskip

(3) The map $d_1 \colon I_s \to \Si I_{s+1}$ is the composite $I_s \ral{\de_s} \Si Y_{s+1} \ral{\Si p_{s+1}} \Si I_{s+1}$. The octahedral axiom applied to this factorization yields the dotted arrows in a commutative diagram
\[
\cxymatrix{
I_s \ar@{=}[d] \ar[r]^-{\de_s} & \Si Y_{s+1} \ar[d]_{\Si p_{s+1}} \ar[r]^-{\Si i_s} & \Si Y_s \ar@{-->}[d]^{\tild{\al}} \ar[r]^-{-\Si p_s} & \Si I_s \ar@{=}[d] \\
I_s \ar[r]^-{d_1} & \Si I_{s+1} \ar[d]_{\Si \de_{s+1}} \ar@{-->}[r]^-{q} & C_{d_1} \ar@{-->}[d]^{\tild{\be}} \ar@{-->}[r]^-{\io} & \Si I_s \\
& \Si^2 Y_{s+2} \ar[d]_{-\Si^{2} i_{s+1}} \ar@{=}[r] & \Si^2 Y_{s+2} \ar[d] & \\
& \Si^2 Y_{s+1} \ar[r]^{\Si^{2} i_s} & \Si^2 Y_{s} & \\
}
\]
where the rows and columns are distinguished and the equation $(-\Si^2 i_{s+1}) \tild{\be} = (\Si \de_s) \io$ holds. The restricted bracket $\lan \stackrel{\ \be}{\Si d_1, d_1}, x \ran_{\fc}$ consists of the maps $\Si X \to \Si^2 I_{s+2}$ appearing as downward composites in the commutative diagram
\[
\xymatrix@C-7pt@R-3pt{
& & & & \Si X \ar@{-->}[d]_-{\Si \al} \ar[rr]^{- \Si x} & & \Si I_s \ar@{=}[d] \\
I_s \ar[rr]^-{d_1} & & \Si I_{s+1} \ar@{=}[dd] \ar[rr]^-{q} & & C_{d_1} \ar[dl]_(0.55){\tild{\be}\!\!} \ar[dd]^{\be} \ar[rr]^-{\io} && \Si I_s \\
& & & \Si^2 Y_{s+2} \ar[dr]^-{\!\Si^2 p_{s+2}} & & \\
& & \Si I_{s+1} \ar[rr]_-{\Si d_1} \ar[ur]^-{\Si \de_{s+1}\!} & & \Si^2 I_{s+2} & \\
}
\]

($\supseteq$) Let $\be (\Si \al) \in \lan \stackrel{\be}{d_1, d_1}, x \ran_{\fc}$. By definition of $\be$, we have $\be (\Si \al) = (\Si^2 p_{s+2}) \tild{\be} (\Si \al)$. Then $\tild{\be} (\Si \al) \colon \Si X \to \Si^{2} Y_{s+2}$ is a valid choice of the lift $\tild{x}$ in the definition of $d_2[x]$:
\begin{align*}
(\Si^2 i_{s+1}) \tild{\be} (\Si \al) &= -(\Si \de_s) \io (\Si \al) \\
&= - (\Si \de_s) (-\Si x) \\
&= \Si (\de_s x).
\end{align*}
($\subseteq$) Given a representative $(\Si p_{s+2}) \tild{x} \in d_2 [x]$, let us show that $\Si \tild{x} \colon \Si X \to \Si^2 Y_{s+2}$ factors as $\Si X \ral{\Si \al} C_{d_1} \ral{\tild{\be}} \Si^{2} Y_{s+2}$ for some $\Si \al$, yielding a factorization of the desired form:
\begin{align*}
(\Si^2 p_{s+2}) (\Si \tild{x}) &= (\Si^2 p_{s+2}) \tild{\be} (\Si \al) \\
&= \be (\Si \al).
\end{align*}
By construction, the map $(\Si^2 i_s) (-\Si^2 i_{s+1}) \colon \Si^2 Y_{s+2} \to \Si^2 Y_{s}$ is a cofiber of $\tild{\be}$. The condition
\[
(\Si^2 i_s) (\Si^2 i_{s+1}) (\Si \tild{x}) = (\Si^2 i_s) \Si (\de_s x) = 0
\]
guarantees the existence of some lift $\Si \al \colon \Si X \to C_{d_1}$ of $\Si \tild{x}$. The chosen lift $\Si \al$ might \emph{not} satisfy $\io (\Si \al) = - \Si x$, but we will correct it to a lift $\Si \al'$ which does. The two sides of the equation become equal after applying $-\Si \de_s$, i.e., $(-\Si \de_s) (-\Si x) = (-\Si \de_s) \io (\Si \al)$ holds.
Hence, the error term factors as
\[
-\Si x - \io \Si \al = (-\Si p_s)(\Si \te)
\]
for some $\Si \te \colon \Si X \to \Si Y_s$, since $-\Si p_s$ is a fiber of $-\Si \de_s$. The corrected map $\Si \al' := \Si \al + \tild{\al} (\Si \te) \colon \Si X \to C_{d_1}$ satisfies $\io (\Si \al') = - \Si x$ 
and still satisfies $\tild{\be} (\Si \al') = \tild{\be} (\Si \al) = \Si \tild{x}$, since the correction term $\tild{\al} (\Si \te)$ satisfies $\tild{\be} \tild{\al} (\Si \te) = 0$. 
\end{proof}

\begin{prop}\label{pr:inclusions}
The following inclusions of subsets hold in $E_1^{s+2,t+1}$: %
\[
d_2 [x] \subseteq (\Si^2 p_{s+2}) \lan \Si \de_{s+1}, d_1, x \ran \subseteq \lan \Si d_1, d_1, x \ran .
\]
\end{prop}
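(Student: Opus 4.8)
The plan is to derive both inclusions purely formally, by combining the alternate description of $d_2[x]$ from Proposition~\ref{pr:DifferentD2}(2) with the reparenthesization inclusions of Lemma~\ref{le:MoveAround}; no fresh diagram chase is needed. The two facts I will lean on are the factorizations coming from the Adams resolution of $Y$, namely $d_1 = (\Si p_{s+1})\de_s \colon I_s \to \Si I_{s+1}$ and, after suspending the analogous identity one level up, $\Si d_1 = (\Si^2 p_{s+2})(\Si \de_{s+1}) \colon \Si I_{s+1} \to \Si^2 I_{s+2}$. I would also record once that every bracket in sight is defined: the cycle condition gives $d_1 x = 0$, while $\de_{s+1} p_{s+1} = 0$ (consecutive maps in an Adams-resolution triangle) gives $(\Si \de_{s+1})(\Si p_{s+1}) = 0$, whence $(\Si \de_{s+1}) d_1 = 0$ and $(\Si d_1) d_1 = 0$.

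For the first inclusion I would start from the equality
\[
d_2[x] = (\Si^2 p_{s+2}) \lan \Si \de_{s+1}, \Si p_{s+1}, \de_s x \ran
\]
of Proposition~\ref{pr:DifferentD2}(2) and read the inner bracket as $\lan f_4, f_3, f_2 f_1 \ran$ for the four-term sequence $\Si^{t-s}X \ral{x} I_s \ral{\de_s} \Si Y_{s+1} \ral{\Si p_{s+1}} \Si I_{s+1} \ral{\Si \de_{s+1}} \Si^2 Y_{s+2}$, that is, $f_1 = x$, $f_2 = \de_s$, $f_3 = \Si p_{s+1}$, $f_4 = \Si \de_{s+1}$, so that $f_2 f_1 = \de_s x$ and $f_3 f_2 = d_1$. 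Lemma~\ref{le:MoveAround}(4) then yields $\lan f_4, f_3, f_2 f_1 \ran \subseteq \lan f_4, f_3 f_2, f_1 \ran = \lan \Si \de_{s+1}, d_1, x \ran$, and post-composing with $\Si^2 p_{s+2}$ gives $d_2[x] \subseteq (\Si^2 p_{s+2}) \lan \Si \de_{s+1}, d_1, x \ran$.

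For the second inclusion I would relabel the shorter sequence $\Si^{t-s}X \ral{x} I_s \ral{d_1} \Si I_{s+1} \ral{\Si \de_{s+1}} \Si^2 Y_{s+2} \ral{\Si^2 p_{s+2}} \Si^2 I_{s+2}$, taking $f_1 = x$, $f_2 = d_1$, $f_3 = \Si \de_{s+1}$, $f_4 = \Si^2 p_{s+2}$, so that $f_4 f_3 = \Si d_1$. Lemma~\ref{le:MoveAround}(1) then gives
\[
(\Si^2 p_{s+2}) \lan \Si \de_{s+1}, d_1, x \ran = f_4 \lan f_3, f_2, f_1 \ran \subseteq \lan f_4 f_3, f_2, f_1 \ran = \lan \Si d_1, d_1, x \ran,
\]
which is exactly the desired second inclusion.

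The argument is short precisely because the genuine work—identifying $d_2[x]$ with a post-composed fiber-cofiber bracket—was already done in Proposition~\ref{pr:DifferentD2}. The only thing requiring care here is bookkeeping: for each inclusion I must choose the correct triple of consecutive maps and confirm that the composite being absorbed really is $d_1$ (for the first inclusion) or $\Si d_1$ (for the second), together with checking definability of each bracket as above. I expect no obstacle beyond matching these labels to the hypotheses of Lemma~\ref{le:MoveAround}; in particular, no octahedral diagram is needed at this stage.
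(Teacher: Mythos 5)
Your proposal is correct and follows essentially the same route as the paper: both inclusions are obtained by starting from the identity $d_2[x] = (\Si^2 p_{s+2}) \lan \Si \de_{s+1}, \Si p_{s+1}, \de_s x \ran$ of Proposition~\ref{pr:DifferentD2}(2) and applying Lemma~\ref{le:MoveAround} parts (4) and (1) with exactly the labellings you chose. The extra remarks on definability of the brackets are harmless but not needed.
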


\begin{proof}
The first inclusion is 
\[
d_2 [x] = (\Si^2 p_{s+2}) \lan \Si \de_{s+1}, \Si p_{s+1}, \de_s x \ran \subseteq (\Si^2 p_{s+2}) \lan \Si \de_{s+1}, (\Si p_{s+1}) \de_s, x \ran,
\]
whereas the second inclusion is
\[
(\Si^2 p_{s+2}) \lan \Si \de_{s+1}, d_1, x \ran \subseteq \lan (\Si^2 p_{s+2}) (\Si \de_{s+1}), d_1, x \ran,
\] 
both using Lemma~\ref{le:MoveAround}.
\end{proof}

\begin{prop}\label{pr:proper-inclusion}
The inclusion $(\Si^2 p_{s+2}) \lan \Si \de_{s+1}, d_1, x \ran \subseteq \lan \Si d_1, d_1, x \ran$ need \emph{not} be an equality in general.
\end{prop}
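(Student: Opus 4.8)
The plan is to reduce the claim to a comparison of indeterminacy subgroups and then to supply an explicit witness. Both inclusions of Proposition~\ref{pr:inclusions} arise from Lemma~\ref{le:MoveAround}(1), and by Lemma~\ref{le:indeterminacy} each of the brackets $\lan \Si \de_{s+1}, d_1, x \ran$ and $\lan \Si d_1, d_1, x \ran$ is a coset of its indeterminacy. Consequently $(\Si^2 p_{s+2}) \lan \Si \de_{s+1}, d_1, x \ran$ is again a coset, being the image of a coset under the group homomorphism $(\Si^2 p_{s+2})_*$. Since the two brackets in question are nonempty nested cosets (the composites $\Si \de_{s+1} \circ d_1 = \Si(\de_{s+1} p_{s+1}) \de_s = 0$ and $d_1 \circ x = 0$ both vanish, so both brackets are occupied), they share a common element, and the inclusion is an equality if and only if the corresponding inclusion of indeterminacy subgroups is an equality. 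It therefore suffices to produce one example in which the image indeterminacy is strictly smaller than the target indeterminacy.

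To carry this out I would first compute the two indeterminacies, working as in the proof of Proposition~\ref{pr:DifferentD2} with $t = s$, so that $x \colon X \to I_s$. The indeterminacy of $\lan \Si d_1, d_1, x \ran$ is
\[
(\Si d_1)_* \, \cat{T}(\Si X, \Si I_{s+1}) + (\Si x)^* \, \cat{T}(\Si I_s, \Si^2 I_{s+2}),
\]
while the indeterminacy of $(\Si^2 p_{s+2}) \lan \Si \de_{s+1}, d_1, x \ran$ is the image under $(\Si^2 p_{s+2})_*$ of the indeterminacy of $\lan \Si \de_{s+1}, d_1, x \ran$, namely
\[
(\Si d_1)_* \, \cat{T}(\Si X, \Si I_{s+1}) + (\Si^2 p_{s+2})_* (\Si x)^* \, \cat{T}(\Si I_s, \Si^2 Y_{s+2}),
\]
where I have used the identity $(\Si^2 p_{s+2})(\Si \de_{s+1}) = \Si d_1$. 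The first summands coincide, so properness of the inclusion is equivalent to the existence of a primary operation $h \colon \Si I_s \to \Si^2 I_{s+2}$ for which $h \circ \Si x$ fails to lie in the image indeterminacy; concretely, $h \circ \Si x$ should not be expressible as $(\Si^2 p_{s+2}) \circ g \circ \Si x + (\Si d_1) \circ k$ for any $g \colon \Si I_s \to \Si^2 Y_{s+2}$ and $k \colon \Si X \to \Si I_{s+1}$. This is a clean and checkable criterion: the offending $h$ witnesses an operation into $\Si^2 I_{s+2}$ that, after restriction along $\Si x$, does not factor through the $\cat{I}$-monomorphism $\Si^2 p_{s+2}$.

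The genuinely hard part, and the only remaining step, is to exhibit a concrete triangulated category, injective class, and Adams resolution realizing this non-factorization. The natural setting is a stable module category $\StMod(kG)$, where the relevant hom-groups are finite-dimensional and can be computed by hand, allowing one to choose the group, the injective objects $I_s$, and the class $x$ so that some $h$ as above genuinely exists. This is precisely the computation performed in Proposition~\ref{pr:proper-inclusion-C4}, which I would invoke to complete the argument: it produces a stable module category in which the required primary operation does not factor through $\Si^2 p_{s+2}$ after restriction along $\Si x$, so that the indeterminacy inclusion, and hence the inclusion of brackets, is proper.
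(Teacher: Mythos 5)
Your proposal is correct and ultimately rests on the same evidence as the paper, which likewise proves the proposition by deferring to the explicit $\StMod(\F_2 C_4)$ computation of Proposition~\ref{pr:proper-inclusion-C4}. Your preliminary reduction via Lemma~\ref{le:indeterminacy} --- that for nested nonempty cosets properness is detected on the indeterminacy subgroups, and that the first summands of the two indeterminacies agree --- is a correct and clean framing of what that appendix computation does implicitly (one bracket there has vanishing indeterminacy while the other does not), though it becomes logically redundant once you cite the appendix proposition, which asserts the properness outright.
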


It was pointed out to us by Robert Bruner that this can happen in principle.
We give an explicit example in Proposition~\ref{pr:proper-inclusion-C4}.

\section{Higher Toda brackets}\label{se:HigherBrackets}

We saw in Section~\ref{se:3-fold-Toda-brackets} that there are several equivalent ways
to define $3$-fold Toda brackets.
Following the approach given in~\cite{McKeown12}, we show that
the fiber-cofiber definition generalizes nicely to $n$-fold Toda brackets.
There are $(n-2)!$ ways to make this generalization, and we prove
that they are all the same up to a specified sign.
We also show that this Toda bracket is self-dual.

Other sources that discuss higher Toda brackets in triangulated categories
are~\cite{Shipley02}*{Appendix A}, \cite{Gelfand03}*{IV \S 2} and~\cite{Sagave08}*{\S 4},
which all give definitions that follow Cohen's approach for spectra or spaces~\cite{Cohen68}.
We show that our definition agrees with those of~\cite{Shipley02} and~\cite{Sagave08}.
(We believe that it sometimes differs in sign from~\cite{Cohen68}.  We have not compared carefully with~\cite{Gelfand03}.)

\begin{defn}\label{def:TodaFamily}
Let $X_0 \ral{f_1} X_1 \ral{f_2} X_2 \ral{f_3} X_3$ be a diagram
in a triangulated category $\cat{T}$.
We define the \Def{Toda family}
of this sequence to be the collection $\TF(f_3, f_2, f_1)$
consisting of all pairs $(\beta, \Sigma \alpha)$, where $\alpha$ and $\beta$ appear in a commutative diagram
\[
\xymatrix @C=\bigcol {
X_0 \ar[d]_-{\al} \ar[r]^-{f_1} & X_1 \ar@{=}[d] & & \\
\Si^{-1} C_{f_2} \ar[r] & X_1 \ar[r]^-{f_2} & X_2 \ar@{=}[d] \ar[r] & C_{f_2} \ar[d]^-{\be}  \\
& & X_2  \ar[r]^-{f_3} & X_3 \\
}
\]
with distinguished middle row.
Equivalently,
\[
\xymatrix @C=\bigcol {
& & & \Sigma X_0 \ar[d]_-{\Sigma \al} \ar[r]^-{-\Sigma f_1} & \Sigma X_1 \ar@{=}[d] \\
& X_1 \ar[r]^-{f_2} & X_2 \ar@{=}[d] \ar[r] & C_{f_2} \ar[d]^-{\be} \ar[r] & \Sigma X_1 \\
& & X_2  \ar[r]^-{f_3} & X_3 , \\
}
\]
where the middle row is again distinguished.  (The negative of $\Sigma f_1$
appears, since when a triangle is rotated, a sign is introduced.)
Note that the maps in each pair form a composable sequence
$\Sigma X_0 \ral{\Sigma \alpha} C_{f_2} \ral{\beta} X_3$,
with varying intermediate object,
and that the collection of composites $\beta \circ \Sigma \alpha$ is exactly the
Toda bracket $\langle f_3, f_2, f_1 \rangle$, using the fiber-cofiber definition
(see Diagram~\eqref{eq:FibCof}).
(Also note that the Toda family is generally a proper class,
but this is only because the intermediate object can be varied up to isomorphism,
and so we will ignore this.)

More generally, if $S$ is a set of composable triples of maps,
starting at $X_0$ and ending at $X_3$, we define $\TF(S)$ to
be the union of $\TF(f_3, f_2, f_1)$ for each triple
$(f_3, f_2, f_1)$ in $S$.
\end{defn}

\begin{defn}\label{def:HigherToda}
Let
$X_0 \ral{f_1} X_1 \ral{f_2} X_2 \ral{f_3} \cdots \ral{f_n} X_n$
be a diagram in a triangulated category $\cat{T}$.
We define the \Def{Toda bracket} $\langle f_n, \ldots, f_1 \rangle$
inductively as follows.
If $n = 2$, it is the set consisting of just the composite $f_2 f_1$.
If $n > 2$, it is the union of the sets
$\langle \beta, \Sigma \alpha, \Sigma f_{n-3}, \ldots, \Sigma f_1 \rangle$,
where $(\beta, \Sigma \alpha)$ is in $\TF(f_n, f_{n-1}, f_{n-2})$.
\end{defn}

In fact, there are $(n-2)!$ such definitions, depending on a
sequence of choices of which triple of consecutive maps to apply
the Toda family construction to.
In Theorem~\ref{th:n-fold} we will enumerate these choices
and show that they all agree up to sign.

\begin{ex}\label{ex:4FoldBracket}
Let us describe $4$-fold Toda brackets in more detail. We have
\[
\lan f_4, f_3, f_2, f_1 \ran
 = \bigcup_{\be, \al} \lan \be, \Si \al, \Si f_1 \ran
 = \bigcup_{\be, \al} \bigcup_{\be', \al'} \{ \be' \circ \Si \al' \}
\]
with $(\be, \Si \al) \in \TF(f_4, f_3, f_2)$ and $(\be', \Si \al') \in \TF(\be, \Si \al, \Si f_1)$. These maps fit into a commutative diagram
\[
  \xymatrix{
    \Si^2 X_0 \ar[r]^-{\Si \al'} &	C_{\Si \al} \ar[r] \ar[ddr]^(0.3){\be'} &	\Si^2 X_1 &  \text{row = $\mathrlap{-\Si^2 f_1}$} \\
    \Si X_1 \ar[r]^{\Si \al} &	C_{f_3} \ar[r] \ar[dr]_(0.45){\be} \ar[u] &	\Si X_2 &	\text{row = $\mathrlap{-\Si f_2}$} \\
    X_2 \ar[r]^{f_3} &	X_3 \ar[u] \ar[r]_{f_4} &	X_4 \\
  & 0 \ar[u] \\
  }
\]
where the horizontal composites are specified as above, and each ``snake''
\[
\xymatrix{
& \cdot \ar[r] & \cdot \\
\cdot \ar[r] & \cdot \ar[u] & \\
}
\]
is a distinguished triangle.
The middle column is an example of a \emph{$3$-filtered object} as defined below.
\end{ex}

Next, we will show that Definition \ref{def:HigherToda} coincides with the definitions of higher Toda brackets in~\cite{Shipley02}*{Appendix A} and~\cite{Sagave08}*{\S 4}, which we recall here.

\begin{defn}\label{def:NFiltered}
Let $n \geq 1$ and consider a diagram in $\cat{T}$
\[
\xymatrix{
Y_0 \ar[r]^-{\la_1} &	Y_1 \ar[r]^-{\la_2} &	Y_2 \ar[r] &	\cdots \ar[r]^-{\la_{n-1}} &	Y_{n-1} \\
}  
\]
consisting of $n-1$ composable maps. An \Def{$n$-filtered object} $Y$ based on $(\la_{n-1}, \ldots, \la_1)$ consists of a sequence of maps
\[
\xymatrix{
0 = F_0 Y \ar[r]^-{i_{0}} &	F_{1} Y \ar[r]^-{i_{1}} &	\cdots \ar[r]^-{i_{n-1}} &	F_n Y = Y \\ 
}
\]
together with distinguished triangles
\[
\xymatrix{
F_{j} Y \ar[r]^-{i_j} &	F_{j+1} Y \ar[r]^-{q_{j+1}} & \Si^{j} Y_{n-1-j} \ar[r]^-{e_j} &	\Si F_j Y \\	
}
\]
for $0 \leq j \leq n-1$, such that for $1 \leq j \leq n-1$, the composite
\[
\xymatrix{
\Si^j Y_{n-1-j} \ar[r]^-{e_j} &	\Si F_j Y \ar[r]^-{\Si q_j} &	\Si^{j} Y_{n-j} \\
}
\]
is equal to $\Si^j \la_{n-j}$. In particular, the $n$-filtered object $Y$ comes equipped with maps
\[
\si'_Y \colon Y_{n-1} \cong F_1 Y \to Y 
\]
\[
\si_Y \colon Y = F_n Y \to \Si^{n-1} Y_0.
\]
\end{defn}

\begin{defn}\label{def:HigherTodaSS}
Let $X_0 \ral{f_1} X_1 \ral{f_2} X_2 \ral{f_3} \cdots \ral{f_n} X_n$
be a diagram in a triangulated category $\cat{T}$.
The \Def{Toda bracket} in the sense of Shipley--Sagave $\langle f_n, \ldots, f_1 \rangle_{\Ship} \subseteq \cat{T}(\Si^{n-2} X_0, X_n)$ is the set of all composites appearing in the middle row of a commutative diagram
\[
\xymatrix{
& X_{n-1} \ar[d]_{\si'_X} \ar[dr]^-{f_n} & \\
\Si^{n-2} X_0 \ar[dr]_{\Si^{n-2} f_1} \ar@{-->}[r] & X \ar[d]^{\si_X} \ar@{-->}[r] & X_n \\
& \Si^{n-2} X_1 , & \\
}
\]
where $X$ is an $(n-1)$-filtered object based on $(f_{n-1}, \ldots, f_3, f_2)$.
\end{defn}

\begin{ex}
For a $3$-fold Toda bracket $\lan f_3, f_2, f_1 \ran_{\Ship}$, a $2$-filtered object $X$ based on $f_2$ amounts to a cofiber of $-f_2$, more precisely, a distinguished triangle
\[
\xymatrix{
X_2 \ar[r]^-{\si'_X} & X \ar[r]^-{\si_X} & \Si X_1 \ar[r]^-{\Si f_2} & \Si X_2.
}
\]
Using this, one readily checks the equality $\lan f_3, f_2, f_1 \ran_{\Ship} = \lan f_3, f_2, f_1 \ran_{\fc}$, as noted in \cite{Sagave08}*{Definition 4.5}.
\end{ex}

\begin{ex}\label{ex:4FoldBracketSS}
For a $4$-fold Toda bracket $\lan f_4, f_3, f_2, f_1 \ran_{\Ship}$, a $3$-filtered object $X$ based on $(f_3, f_2)$ consists of the data displayed in the diagram
\[
  \xymatrix{
    & F_3 X = X \ar[r]^-{q_3 = \si_X} & \Si^2 X_1 & \\
    \Si X_1 \ar[r]^-{- \Si^{-1} e_2} & F_2 X \ar[r]^-{q_2} \ar[u]_{i_2} & \Si X_2 & \text{row = $\mathrlap{-\Si f_2}$} \\
    X_2 \ar[r]^-{- \Si^{-1} e_1} & F_1 X \ar[u]_{i_1} \ar[r]^-{q_1}_-{\cong} & X_3 & \text{row = $\mathrlap{-f_3}$} \\
  & F_0 X = 0 , \ar[u]_{i_0} \\
  }
\]
where the two snakes are distinguished. The bracket consists of the maps $\Si^2 X_0 \to X_4$ appearing as composites of the dotted arrows in a commutative diagram
\[
  \xymatrix@R+5pt{
    \Si^2 X_0 \ar@{-->}[r] & X \ar[r]^-{\si_X} \ar@/^1pc/@{-->}[ddr] & \Si^2 X_1 & \text{row = $\mathrlap{\Si^2 f_1}$} \\
    \Si X_1 \ar[r]^-{- \Si^{-1} e_2} & F_2 X \ar[r]^-{q_2} \ar[u] & \Si X_2 & \text{row = $\mathrlap{-\Si f_2}$} \\
    X_2 \ar[r]^-{- f_3} & X_3 \ar[u] \ar[r]^-{f_4} & X_4 & \\
  & 0 , \ar[u] \\
  }
\]
where the two snakes are distinguished. By negating the first and third map in each snake,
this recovers the description in Example \ref{ex:4FoldBracket}, thus proving the equality of subsets
\[
\lan f_4, f_3, f_2, f_1 \ran_{\Ship} = \lan f_4, f_3, f_2, f_1 \ran.
\]
\end{ex}

\begin{prop}
Definitions \ref{def:HigherToda} and \ref{def:HigherTodaSS} agree. In other words, we have the equality
\[
\lan f_n, \ldots, f_1 \ran_{\Ship} = \lan f_n, \ldots, f_1 \ran
\]
of subsets of $\cat{T}(\Si^{n-2} X_0, X_n)$.%
\end{prop}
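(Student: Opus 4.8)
The plan is to prove the equality $\lan f_n, \ldots, f_1 \ran_{\Ship} = \lan f_n, \ldots, f_1 \ran$ by induction on $n$, peeling off one application of the Toda family construction and matching it against the top layer of a filtered object. The base case $n=2$ is immediate, since both sides reduce to the single composite $f_2 f_1$. The case $n=3$ (and $n=4$) has already been verified in the examples preceding the proposition, which provides both a sanity check and the template for the general argument.

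First I would set up the inductive step carefully. Given the diagram $X_0 \ral{f_1} \cdots \ral{f_n} X_n$ with $n > 2$, Definition~\ref{def:HigherToda} says
\[
\lan f_n, \ldots, f_1 \ran = \bigcup_{(\be, \Si\al) \in \TF(f_n, f_{n-1}, f_{n-2})} \lan \be, \Si \al, \Si f_{n-3}, \ldots, \Si f_1 \ran,
\]
and by the inductive hypothesis each inner bracket equals the Shipley--Sagave bracket $\lan \be, \Si\al, \Si f_{n-3}, \ldots, \Si f_1 \ran_{\Ship}$, which is computed using an $(n-2)$-filtered object based on $(\Si\al, \Si f_{n-3}, \ldots, \Si f_2)$. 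The heart of the matter is thus a bijective correspondence between the data defining an $(n-1)$-filtered object $X$ based on $(f_{n-1}, \ldots, f_2)$, and the data consisting of a pair $(\be, \Si\al) \in \TF(f_n, f_{n-1}, f_{n-2})$ together with an $(n-2)$-filtered object based on $(\Si\al, \Si f_{n-3}, \ldots, \Si f_2)$.

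The key geometric observation is that the top distinguished triangle of an $(n-1)$-filtered object, namely $F_{n-2} X \ral{i_{n-2}} F_{n-1} X \ral{q_{n-1}} \Si^{n-2} X_1 \ral{e_{n-2}} \Si F_{n-2} X$, isolates the object $X_1$ in the top filtration layer; the remaining filtration $0 = F_0 X \to \cdots \to F_{n-1} X = X$ together with the defining triangles for $0 \le j \le n-3$ assembles into an $(n-2)$-filtered object whose top layer involves $\Si^{n-3} X_2$. I would exhibit the map $\al \colon X_2 \to \Si^{-1} C_{f_n}$ (equivalently the extension $\be \colon C_{f_{n-1}} \to X_n$) as precisely the data recording how the final map $f_n$ interacts with the cofiber $C_{f_{n-1}}$, and show that suspending everything once shifts an $(n-1)$-filtered object based on $(f_{n-1}, \ldots, f_2)$ to an $(n-2)$-filtered object based on $(\Si \al, \Si f_{n-3}, \ldots, \Si f_2)$, matching the signs and suspension shifts in Definition~\ref{def:NFiltered}. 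The explicit $n=4$ computation in Example~\ref{ex:4FoldBracketSS}, where the $3$-filtered object decomposes into the pair $(\be, \Si\al)$ and a $2$-filtered object (a cofiber), is exactly this correspondence in the first nontrivial case, and I would generalize its bookkeeping.

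The main obstacle I anticipate is the sign and suspension bookkeeping: the filtered-object definition carries signs on the connecting maps (the $-\Si^{-1} e_j$ and $-\Si f_2$ appearing in Example~\ref{ex:4FoldBracketSS}) that must be reconciled with the sign $-\Si f_1$ introduced by rotating the triangle in the Toda family construction of Definition~\ref{def:TodaFamily}. Ensuring that the two sets of composites agree \emph{on the nose} as subsets of $\cat{T}(\Si^{n-2} X_0, X_n)$, rather than merely up to an overall sign, requires checking that the negations introduced at each inductive stage cancel against those built into the filtered-object triangles. I would handle this by fixing the conventions established in the $n=3$ and $n=4$ examples and propagating them, verifying that the composite $\si_X \colon X \to \Si^{n-2} X_1$ and $\si'_X \colon X_{n-1} \to X$ used in Definition~\ref{def:HigherTodaSS} correspond under the decomposition to the honest (unsigned) composites $\be \circ \Si\al$ and the precomposition with $\Si^{n-2} f_1$. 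Once the correspondence of data is established with matching signs, the equality of the resulting composites — and hence of the two subsets — follows formally.
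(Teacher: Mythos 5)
Your overall strategy --- induct on $n$, match one application of the Toda family construction against one layer of the filtered object, and track signs as in Examples~\ref{ex:4FoldBracket} and~\ref{ex:4FoldBracketSS} --- is reasonable, and it is essentially how one would flesh out the paper's one-line proof (``a straightforward generalization of Example~\ref{ex:4FoldBracketSS}''). However, the inductive step as you describe it peels the filtration at the wrong end, so the decomposition you propose does not match the recursion you are inducting on.

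The recursion in Definition~\ref{def:HigherToda} consumes the three maps $f_n, f_{n-1}, f_{n-2}$ at the \emph{target} end of the sequence and replaces them by a pair $(\be,\Si\al)$ with $\be\colon C_{f_{n-1}}\to X_n$ and $\Si\al\colon \Si X_{n-3}\to C_{f_{n-1}}$. In an $(n-1)$-filtered object $X$ based on $(f_{n-1},\dots,f_2)$ these data live at the \emph{bottom} of the filtration: $F_1X\cong X_{n-1}$, the $j=1$ triangle exhibits $F_2X$ as a cofiber of $f_{n-1}$ up to sign, $\Si\al$ corresponds to $\pm\Si^{-1}e_2$, and $\be$ is the first stage of the extension of $f_n$ along $\si'_X$. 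The correct inductive step therefore strips off the bottom triangle and re-indexes, setting $F_kW:=F_{k+1}X$, which produces an $(n-2)$-filtered object $W$ based on $(\Si\al,\Si f_{n-3},\dots,\Si f_2)$ with the \emph{same} top quotient $\Si^{n-2}X_1$; no suspension of the pieces $F_jX$ is applied, because the suspensions in the new based sequence are absorbed by the degree shifts already built into Definition~\ref{def:NFiltered}. You instead remove the \emph{top} triangle $F_{n-2}X\to F_{n-1}X\to\Si^{n-2}X_1$, leaving an $(n-2)$-filtered object based on $(f_{n-1},\dots,f_3)$ with top quotient $\Si^{n-3}X_2$ --- a filtered object for the wrong sequence, which still contains $f_{n-1}$ and $f_{n-2}$, omits $f_2$, and is not something to which your inductive hypothesis (which requires a filtered object based on $(\Si\al,\Si f_{n-3},\dots,\Si f_2)$) can be applied. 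Relatedly, the map you call ``$\al\colon X_2\to\Si^{-1}C_{f_n}$'' does not occur in the construction ($\al$ has source $X_{n-3}$ and target $\Si^{-1}C_{f_{n-1}}$), and $\al$ and $\be$ are independent pieces of data rather than equivalent ones. Once the decomposition is set up at the correct end, your plan for the sign bookkeeping (negating the first and third maps of each snake, as in Example~\ref{ex:4FoldBracketSS}) does go through, and the remainder of the argument is as you describe.
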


\begin{proof}
This is a straightforward generalization of Example \ref{ex:4FoldBracketSS}.
\end{proof}

We define the \Def{negative} of a Toda family $\TF(f_3, f_2, f_1)$
to consist of pairs $(\be, -\Si \al)$ for $(\be, \Si \al) \in \TF(f_3, f_2, f_1)$.
(Since changing the sign of two maps in a triangle doesn't affect
whether it is distinguished, it would be equivalent to put the
minus sign with the $\beta$.)

\begin{lem}\label{le:four-fold}
Let
$X_0 \ral{f_1} X_1 \ral{f_2} X_2 \ral{f_3} X_3 \ral{f_4} X_4$
be a diagram in a triangulated category $\cat{T}$.
Then the two sets of pairs
$\TF(\TF(f_4, f_3, f_2), \Sigma f_1)$ and
$\TF(f_4, \TF(f_3, f_2, f_1))$ are negatives of each other.
\end{lem}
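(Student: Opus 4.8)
The plan is to unwind both iterated Toda families explicitly, relate their ``intermediate objects'' by a single octahedron, and then match the remaining data up to the sign coming from rotating a triangle. Fix distinguished triangles $X_1 \ral{f_2} X_2 \ral{q_2} C_{f_2} \ral{\partial} \Si X_1$ and $\Si^{-1} C_{f_3} \ral{j_3} X_2 \ral{f_3} X_3 \ral{q_3} C_{f_3}$. A pair in $\TF(\TF(f_4,f_3,f_2),\Si f_1)$ is built from a lift $\al \colon X_1 \to \Si^{-1}C_{f_3}$ of $f_2$ (so $j_3\al = f_2$) and an extension $\be \colon C_{f_3} \to X_4$ of $f_4$ (so $\be q_3 = f_4$), forming the composable triple $\Si X_0 \ral{\Si f_1} \Si X_1 \ral{\Si\al} C_{f_3} \ral{\be} X_4$, followed by a lift $\al'$ of $\Si f_1$ to the fiber of $\Si\al$ and an extension $\be'$ of $\be$ to $C_{\Si\al}$; the resulting pair is $(\be', \Si\al')$ with intermediate object $C_{\Si\al}$. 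Dually, a pair in $\TF(f_4,\TF(f_3,f_2,f_1))$ is built from a lift $c \colon X_0 \to \Si^{-1}C_{f_2}$ of $f_1$ and an extension $d \colon C_{f_2} \to X_3$ of $f_3$, forming $\Si X_0 \ral{\Si c} C_{f_2} \ral{d} X_3 \ral{f_4} X_4$, followed by a lift $c'$ of $\Si c$ to the fiber of $d$ and an extension $d'$ of $f_4$ to $C_d$; the resulting pair is $(d', \Si c')$ with intermediate object $C_d$. Since families are only considered up to isomorphism of the intermediate object, I must exhibit an isomorphism $C_d \cong C_{\Si\al}$ carrying $(d', \Si c')$ to $(\be', -\Si\al')$, and conversely.

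The geometric heart is the octahedral axiom applied to the factorization $f_2 = j_3 \circ \al$. Its cofiber triangle reads $C_\al \to C_{f_2} \ral{d} X_3 \to \Si C_\al$, in which the octahedral map $C_{f_2} \to X_3$ restricts along $q_2$ to $f_3$, i.e.\ is an extension $d$ of $f_3$; this simultaneously produces the extension $d$ of Approach~2 and identifies its cofiber $C_d \cong \Si C_\al$. Because suspension commutes with cofibers, $\Si C_\al \cong C_{\Si\al}$, so the same octahedron identifies the two intermediate objects $C_d \cong C_{\Si\al}$. (Running the dual octahedron on $f_3 = d \circ q_2$ instead, the connecting map $\Si X_1 \to C_{f_3}$ is forced, up to sign, to be $\Si\al$ by completing $(\al,\id)$ to a morphism of triangles; the two octahedra express that lifts $\al$ of $f_2$ and extensions $d$ of $f_3$ correspond bijectively, both being torsors under $\cat{T}(\Si X_1, X_3)$.) This is what converts ``resolve the $(f_3,f_2)$ end first'' into its mirror image.

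With the intermediate objects identified, I would transport the second-level data across the octahedron. On the extension side, $\be'$ extends $\be$ and $\be$ extends $f_4$ across $q_3$, so that $\be'$ and $d'$ are two extensions of $f_4$ to the same object $C_d \cong C_{\Si\al}$; hence they agree after correcting by a map factoring through the relevant cofiber, exactly as in the correction step of Proposition~\ref{TodaBracketsAgree}. On the lift side, the fibers of $\Si\al$ and of $d$ are both identified with $C_\al$, and I would compare $\al'$ (lifting $\Si f_1$) with $c'$ (lifting $\Si c$) using $\Si j_2 \cdot \Si c = \Si f_1$ together with the octahedral compatibility of the two fiber inclusions $C_\al \to \Si X_1$ and $C_\al \to C_{f_2}$. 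Here the rotation sign visible as $-\Si^2 f_1$ in the diagram of Example~\ref{ex:4FoldBracket} (equivalently the $-\Si f_1$ in the rotated form of Definition~\ref{def:TodaFamily}) forces $\Si\al' = -\Si c'$ after a second correction by a map killed by the extension, which is precisely the asserted passage to the negative. Since lifts and extensions are chosen freely, each correction must be checked to preserve both the composite and the other component of the pair.

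Because the entire construction is symmetric under reversing the roles of the two ends of the diagram, running the same octahedral comparison backwards shows that every pair of $\TF(f_4,\TF(f_3,f_2,f_1))$ arises, negated, from $\TF(\TF(f_4,f_3,f_2),\Si f_1)$, upgrading an inclusion to an equality of sets of pairs. I expect the main obstacle to be neither the existence of the octahedron nor the matching of objects, but the sign and correction bookkeeping: isolating exactly where the single minus sign is produced (the triangle rotations inside the $\TF$ construction, as opposed to the sign choices inside the octahedral diagram, which must be arranged so as not to interfere), and verifying that the chosen octahedral isomorphism realizes the ``up to isomorphism of intermediate object'' clause uniformly, so that the correspondence is a genuine bijection in both directions rather than merely a comparison of composites.
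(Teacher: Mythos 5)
Your geometric core is the same as the paper's: the octahedron applied to the factorization $f_2 = j_3\al$ produces the extension $d$ of $f_3$ together with an identification $C_d \cong \Si C_\al \cong C_{\Si\al}$ compatible with the surrounding triangles, and the single minus sign is indeed the rotation sign built into the Toda family. The gap is in your plan for the second half. The lemma is a statement about memberships in unions: given $(\be',\Si\al')$ in $\TF(\be,\Si\al,\Si f_1)$, you must exhibit a \emph{specific} pair $(d,\Si c)$ in $\TF(f_3,f_2,f_1)$ for which $(\be',-\Si\al')$ literally satisfies the lift and extension conditions defining $\TF(f_4,d,\Si c)$. Your plan instead takes $c$ as an essentially independent choice and then matches the second-level data ``after correcting by a map factoring through the relevant cofiber.'' That logic does not close: correcting $\Si\al'$ or $\be'$ changes the element you are trying to locate, so it cannot show that the \emph{original} pair lies in the target family; and for an independently chosen $c$ the required lift condition $z\circ(-\Si\al') = -\Si^2 c$ (writing $X_3 \ral{w} C_d \ral{z} \Si C_{f_2}$ for the cofiber triangle of $d$) will in general simply fail.

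The repair --- and this is exactly what the paper does --- is to \emph{manufacture} the first-level lift from the given data rather than choose it and correct. Writing $C_{f_3} \ral{u} C_{\Si\al} \ral{v} \Si^2 X_1$ for the cofiber triangle of $\Si\al$, define $\Si^2 c := z \circ \Si\al'$, using the octahedral identification $C_{\Si\al} \cong C_d$. The octahedral compatibility $(\Si\partial)\circ z = v$ together with the defining property $v\circ\Si\al' = -\Si^2 f_1$ of the second-level lift gives $\partial\circ\Si c = -\Si f_1$, so $(d,\Si c)\in\TF(f_3,f_2,f_1)$; the extension condition is $\be'\circ w = \be'\circ u\circ q_3 = \be\circ q_3 = f_4$, on the nose; and the lift condition $z\circ(-\Si\al') = -\Si^2 c$ holds by construction. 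No correction step occurs anywhere (unlike in Proposition~\ref{TodaBracketsAgree}, there is only one constraint on each map, and it is satisfied by definition), and the unique sign is the one you identified. With this adjustment your argument coincides with the paper's proof of one inclusion, the reverse inclusion being dual; as written, the correction-based matching is the step that would fail.
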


This is stronger than saying that the two ways of computing the Toda bracket
$\langle f_4, f_3, f_2, f_1 \rangle$ are negatives, and the stronger statement
will be used inductively to prove Theorem~\ref{th:n-fold}.

\begin{proof}
We will show that the negative of
$\TF(\TF(f_4, f_3, f_2), \Sigma f_1)$
is contained in the family
$\TF(f_4, \TF(f_3, f_2, f_1))$.
The reverse inclusion is proved dually.

Suppose $(\beta, \Sigma \alpha)$ is in $\TF(\TF(f_4, f_3, f_2), \Sigma f_1)$,
that is, $(\beta, \Sigma \alpha)$ is in $\TF(\beta', \Sigma \alpha', \Sigma f_1)$
for some $(\beta', \Sigma \alpha')$ in $\TF(f_4, f_3, f_2)$.
This means that we have the following commutative diagram
\[
  \xymatrix@!@C=-0.2ex@R=-0.2ex{
                 &&                           && \Sigma X_1 \ar[rr]^{-\Sigma f_2} \ar@{-->}[dd]^{\Sigma \alpha'} && \Sigma X_2 \ar@{=}[dd] \\
                                                                                                                           \\
X_2 \ar[rr]^{f_3} && X_3 \ar[dr]_-{f_4} \ar[rr] && C_{f_3} \ar@{-->}[dl]^{\!\beta'} \ar[rr] \ar[dd]        && \Sigma X_2 \\
                 &&             & X_4                                                                                      \\
                 &&                         && C_{\Sigma \alpha'} \ar@{-->}[ul]_{\!\beta} \ar[dd] \\
                 &&             & \Sigma^2 X_0 \ar@{-->}[ur]^{\Sigma \alpha} \ar[dr]_-{- \Sigma^2 f_1}                                  \\
                 &&                         && \Sigma^2 X_1 && \\
  }
\]
in which the long row and column are distinguished triangles.

Using the octahedral axiom, there exists a map $\delta : C_{f_2} \to X_3$
in the following diagram making the two squares commute
and such that the diagram can be extended as shown,
with all rows and columns distinguished:
\[
  \xymatrix@!@C=-0.7ex@R=-0.7ex{
                        &&        & \Sigma X_0 \ar@{-->}[dl]_{\gamma\!} \ar[dr]^{\!-\Sigma f_1}\\
X_2 \ar[rr] \ar@{=}[dd] && C_{f_2} \ar[rr] \ar@{-->}[dd]^{\delta} && \Sigma X_1 \ar[rr]^{-\Sigma f_2} \ar@{-->}[dd]^{\Sigma \alpha'} && \Sigma X_2 \ar@{=}[dd] \\
                                                                                                                           \\
X_2 \ar[rr]^{f_3}  && X_3 \ar[dr]^-{f_4} \ar[rr] \ar[dd] && C_{f_3} \ar@{-->}[dl]^{\!\!\beta'} \ar[rr] \ar[dd]        && \Sigma X_2          \\
                 &&             & X_4                                                                                       \\
                 && C_{\delta} \ar@{=}[rr] \ar[dd] && C_{\Sigma \alpha'} \ar@{-->}[ul]_{\!\!\beta} \ar[dd]     &&       \\
                 &&             & \Sigma^2 X_0 \ar@{-->}[ur]^(0.4){\Sigma \alpha\!} \ar[dr]_(0.4){-\Sigma^2 \!f_1\!\!\!\!} \ar@{-->}[dl]_{\Sigma \gamma\!\!\!}     \\
                 && \Sigma C_{f_2} \ar[rr]   && \Sigma^2 X_1   .               &&        \\
  }
\]
Define $\Sigma \gamma$ to be the composite 
$\Sigma^2 X_0 \to C_{\Sigma \alpha'} = C_{\delta} \to \Sigma C_{f_2}$, where the first map is $\Sigma \alpha$.
Then the small triangles at the top and bottom of the last diagram commute as well.
Therefore, $(\delta, \gamma)$ is in $\TF(f_3, f_2, f_1)$.
Moreover, this diagram shows that
$(\beta, - \Sigma \alpha)$ is in $\TF(f_4, \delta, \gamma)$,
completing the argument.
\end{proof}

To concisely describe different ways of computing higher Toda
brackets, we introduce the following notation.
For $0 \leq j \leq n-3$, write $\TF_j(f_n, f_{n-1}, \ldots, f_1)$ for the set of tuples
\[
  \{ (f_n, f_{n-1}, \ldots, f_{n-j+1}, \beta, \Si \alpha, \Si f_{n-j-3}, \ldots, \Si f_1) \},
\]
where $(\beta, \Si \alpha)$ is in $\TF(f_{n-j}, f_{n-j-1}, f_{n-j-2})$.
(There are $j$ maps to the left of the three\break used for the Toda family.)
If $S$ is a set of $n$-tuples of composable maps, we define\break
$\TF_j(S)$ to be the union of the sets $\TF_j(f_n, f_{n-1}, \ldots, f_1)$
for $(f_n, f_{n-1}, \ldots, f_1)$ in $S$.
With this\break notation, the standard Toda bracket $\lan f_n, \ldots, f_1 \ran$ consists of the composites of all the pairs\break occurring in the iterated Toda family
\[
\TF(f_n, \ldots, f_1) := \TF_0(\TF_0(\TF_0(\cdots \TF_0(f_n, \ldots, f_1) \cdots ))).
\]
A general Toda bracket can be written in the form 
$\TF_{j_1}(\TF_{j_2}(\TF_{j_3}(\cdots \TF_{j_{n-2}}(f_n, \ldots, f_1) \cdots )))$,
where $j_1, j_2, \ldots, j_{n-2}$ is a sequence of natural numbers
with $0 \leq j_i < i$ for each $i$.
There are $(n-2)!$ such sequences.

\begin{rem}
There are six ways to compute the five-fold Toda bracket 
$\langle f_5, f_4, f_3, f_2, f_1 \rangle$, as the set of composites
of the pairs of maps in one of the following sets:
\begin{align*}
&\TF_0(\TF_0(\TF_0(f_5, f_4, f_3, f_2, f_1))) =  \TF(\TF(\TF(f_5, f_4, f_3), \Si f_2), \Si^2 f_1),\\
&\TF_0(\TF_0(\TF_1(f_5, f_4, f_3, f_2, f_1))) =  \TF(\TF(f_5, \TF(f_4, f_3, f_2)), \Si^2 f_1),\\
&\TF_0(\TF_1(\TF_1(f_5, f_4, f_3, f_2, f_1))) =  \TF(f_5, \TF(\TF(f_4, f_3, f_2), \Si f_1)),\\
&\TF_0(\TF_1(\TF_2(f_5, f_4, f_3, f_2, f_1))) =  \TF(f_5, \TF(f_4, \TF(f_3, f_2, f_1))),\\
&\TF_0(\TF_0(\TF_2(f_5, f_4, f_3, f_2, f_1))), \quad\text{and}\\
&\TF_0(\TF_1(\TF_0(f_5, f_4, f_3, f_2, f_1))).
\end{align*}
The last two cannot be expressed directly just using $\TF$.
\end{rem}

Now we can prove the main result of this section.

\begin{thm}\label{th:n-fold}
The Toda bracket computed using the sequence $j_1, j_2, \ldots, j_{n-2}$
equals the standard Toda bracket up to the sign $(-1)^{\sum j_i}$.
\end{thm}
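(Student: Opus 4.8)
The plan is to prove Theorem~\ref{th:n-fold} by induction on $n$, using Lemma~\ref{le:four-fold} as the engine that lets me move a single $T$ operation past an adjacent one while paying exactly one sign. The key conceptual point is that any two of the $(n-2)!$ parenthesization/choice sequences $(j_1,\ldots,j_{n-2})$ are connected by a sequence of elementary moves, and each elementary move corresponds precisely to an application of Lemma~\ref{le:four-fold} on some three consecutive maps, negating the family and hence contributing a factor of $(-1)$ to the sign $(-1)^{\sum j_i}$. So the strategy is: (i) fix a canonical reference sequence; (ii) show every other sequence reaches it by elementary moves; (iii) check that each move changes $\sum j_i$ by exactly $\pm 1$ and negates the Toda family by exactly the same amount, so the two track together.

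First I would set up the inductive framework carefully. The base cases $n=2,3$ are trivial (there is a unique way, empty sign). For the inductive step, I would think of a computation of $\langle f_n,\ldots,f_1\rangle$ as an iterated application of the Toda-family operators $T_{j_i}$, producing nested families of pairs, and I would formulate the statement at the level of \emph{families} (tuples of maps), not just at the level of the final bracket subset — exactly as Lemma~\ref{le:four-fold} does, since that lemma's ``stronger than the bracket'' formulation is flagged as the thing needed for the induction. Concretely, I expect the cleanest inductive invariant to say: the family produced by any sequence equals the family produced by the reference sequence, up to the global sign $(-1)^{\sum j_i}$ applied to the $\Sigma\alpha$ component. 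I would take as reference the fully left-nested sequence $(0,0,\ldots,0)$, i.e.\ $\TF(f_n,\ldots,f_1)=T_0(T_0(\cdots T_0(f_n,\ldots,f_1)\cdots))$, whose sign is $(-1)^0=1$ and which computes the standard bracket by definition.

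The heart of the argument is the elementary move. Given any sequence, the outermost operator is some $T_{j_1}$; by the recursive structure, $T_{j_1}$ applied last operates on a triple of consecutive maps sitting at positions determined by $j_1$, and the remaining operators act ``inside.'' Lemma~\ref{le:four-fold} is exactly the statement that relates two adjacent groupings of four consecutive maps — $T(T(f_4,f_3,f_2),\Sigma f_1)$ versus $T(f_4,T(f_3,f_2,f_1))$ — declaring them negatives. Reindexed, this says $T_j T_{j'}=-\,T_{j'} T_{j''}$ for the appropriate adjacent indices, i.e.\ swapping the order in which two neighboring triples are collapsed changes $\sum j_i$ by $1$ and negates the family. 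I would make this reindexing precise: an application of Lemma~\ref{le:four-fold} transposes two adjacent ``bracketings'' and the bookkeeping shows it alters exactly one $j_i$ by $\pm1$. Then, since the symmetric group on these choices is generated by adjacent transpositions, any sequence is connected to $(0,\ldots,0)$ by such moves, and the accumulated sign is $(-1)^{(\text{number of moves})}=(-1)^{\sum j_i}$ because each move changes $\sum j_i$ by one and the sign by one in lockstep.

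The main obstacle I anticipate is the combinatorial bookkeeping of the sign rather than any new geometric input: I must verify that the parity of the number of adjacent transpositions needed to sort a given choice sequence to the reference equals $\sum j_i \pmod 2$, \emph{and} that this matches the number of negations incurred from Lemma~\ref{le:four-fold}. The subtlety is that Lemma~\ref{le:four-fold} introduces suspensions $\Sigma f_i$ when it rebrackets, so when I transport a $T$ past another I must track how the indices $j_i$ shift and confirm the sign contributions telescope correctly (in particular that no spurious sign comes from the $-\Sigma f_1$ rotations beyond what Lemma~\ref{le:four-fold} already accounts for). I would handle this by proving a clean lemma of the form ``$T_j\,T_{j+1}$ and $T_{j}\,T_{0}$ (on the same underlying tuple) differ by a single negation and a unit change in $\sum j_i$,'' and then invoking that every sequence $(j_1,\ldots,j_{n-2})$ with $0\le j_i<i$ indexes a distinct element of a set of size $(n-2)!$ on which these moves act transitively, so the induction closes with the advertised sign $(-1)^{\sum j_i}$.
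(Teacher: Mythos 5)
Your proposal follows essentially the same route as the paper: Lemma~\ref{le:four-fold}, stated at the level of Toda \emph{families} rather than brackets, serves as the elementary move; every sequence is connected to the all-zeros reference by such moves; and each move simultaneously negates the family and changes the parity of $\sum j_i$, so the signs track in lockstep. Two small points to fix in the write-up: the elementary move replaces consecutive entries $k, k+1$ by $k,k$ (a unit change in $\sum j_i$), so your proposed ``clean lemma'' comparing $T_j T_{j+1}$ with $T_j T_0$ is misstated for $j \ge 1$, and the transitivity of these moves is more safely established by induction on the length of the initial segment (as the set of sequences is only in bijection with the symmetric group via the factorial number system, and the moves are not literally adjacent transpositions).
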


\begin{proof}
Let $j_1, j_2, \ldots, j_{n-2}$ be a sequence with $0 \leq j_i < i$ for each $i$.
Lemma~\ref{le:four-fold} tells us that if we replace consecutive entries 
$k, k+1$ with $k, k$ in any such sequence, the two Toda brackets agree up to a sign.
To begin with, we ignore the signs.
We will prove by induction on $\ell$ that the initial portion
$j_1, \ldots, j_\ell$ of such a sequence can be converted into
any other sequence, using just the move allowed by Lemma~\ref{le:four-fold} and its inverse,
and without changing $j_i$ for $i > \ell$.
For $\ell = 1$, there is only one sequence $0$.
For $\ell = 2$, there are two sequences, $0, 0$ and $0, 1$, and Lemma~\ref{le:four-fold} applies.
For $\ell > 2$, suppose our goal is to produce the sequence $j'_1, \ldots, j'_\ell$.
We break the argument into three cases:
\medskip

\noindent
Case 1: $j'_\ell = j_\ell$.  We can directly use the induction hypothesis to
adjust the entries in the first $\ell - 1$ positions.
\medskip

\noindent
Case 2: $j'_\ell > j_\ell$.  By induction, we can change the first $\ell - 1$
entries in the sequence $j$ so that the entry in position $\ell - 1$ is $j_\ell$,
since $j_{\ell} < j'_{\ell} \leq \ell - 1$.
Then, using Lemma~\ref{le:four-fold}, we can change the entry in position $\ell$ to $j_\ell + 1$.
Continuing in this way, we get $j'_\ell$ in position $\ell$, and then we
are in Case 1.
\medskip

\noindent
Case 3: $j'_\ell < j_\ell$.  Since the moves are reversible, this is equivalent to Case 2.

To handle the sign, first note that signs propagate through the Toda family construction.
More precisely, suppose $S$ is a set of $n$-tuples of maps, and let $S'$ be a set obtained
by negating the $k^{\text{th}}$ map in each $n$-tuple for some fixed $k$.
Then $\TF_j(S)$ has the same relationship to $\TF_j(S')$, possibly for a different value of $k$.

As a result, applying the move of Lemma~\ref{le:four-fold} changes the resulting
Toda bracket by a sign.
That move also changes the parity of $\sum_i j_i$.
Since we get a plus sign when each $j_i$ is zero, it follows that the
difference in sign in general is $(-1)^{\sum_i j_i}$.
\end{proof}

An animation of this argument is available at~\cite{Canim}.
It was pointed out by Dylan Wilson that the combinatorial part of the above proof 
is equivalent to the well-known fact that if a binary operation is associative on triples,
then it is associative on $n$-tuples.

\medskip

In order to compare our Toda brackets to the Toda brackets in the opposite
category, we need one lemma.

\begin{lem}\label{le:suspension-of-Toda-family}
Let
$X_0 \ral{f_1} X_1 \ral{f_2} X_2 \ral{f_3} X_3$
be a diagram in a triangulated category $\cat{T}$.
Then the Toda family $\TF(\Si f_3, \Si f_2, \Si f_1)$ is the negative
of the suspension of $\TF(f_3, f_2, f_1)$.
That is, it consists of $(\Si \be, -\Si^{2} \al)$ for $(\be, \Si \al)$ 
in $\TF(f_3, f_2, f_1)$.
\end{lem}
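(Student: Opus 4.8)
The plan is to unwind both Toda families against a single chosen cofiber of $f_2$ and to track exactly where the suspension functor introduces a sign. Recall that an element of $T(f_3, f_2, f_1)$ is produced from three pieces of data: a distinguished triangle $X_1 \ral{f_2} X_2 \ral{q_2} C_{f_2} \ral{\io_2} \Si X_1$ realizing $f_2$, a lift $\al \colon X_0 \to \Si^{-1} C_{f_2}$ of $f_1$ (equivalently, by the rotated diagram in Definition~\ref{def:TodaFamily}, a map $\Si \al \colon \Si X_0 \to C_{f_2}$ with $\io_2 (\Si \al) = -\Si f_1$), and an extension $\be \colon C_{f_2} \to X_3$ of $f_3$ (so $\be q_2 = f_3$); the resulting pair is $(\be, \Si \al)$. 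The single structural fact I would isolate at the outset is that $\Si$ is exact \emph{with a sign on the connecting map}: if $(f_2, q_2, \io_2)$ is distinguished, then applying the rotation axiom three times and then negating the first two maps shows that $\Si X_1 \ral{\Si f_2} \Si X_2 \ral{\Si q_2} \Si C_{f_2} \ral{-\Si \io_2} \Si^2 X_1$ is distinguished. This is the sole source of the sign in the statement.

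First I would use this to compute $T(\Si f_3, \Si f_2, \Si f_1)$ with the cofiber of $\Si f_2$ taken to be $\Si C_{f_2}$, equipped with the connecting map $-\Si \io_2$. Since any two cofibers of $\Si f_2$ are isomorphic and the Toda family ignores the choice of intermediate object up to isomorphism, this entails no loss of generality. An element of the suspended family is then a pair $(\be^\sharp, \Si \al^\sharp)$ with $\be^\sharp \colon \Si C_{f_2} \to \Si X_3$ and $\al^\sharp \colon \Si X_0 \to C_{f_2}$ (using $\Si^{-1}(\Si C_{f_2}) = C_{f_2}$). Matching the defining conditions against those above is now purely formal, and this is where the asymmetry between $\al$ and $\be$ appears. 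The extension condition $\be^\sharp (\Si q_2) = \Si f_3$ says exactly that $\be^\sharp = \Si \be$ for an extension $\be$ with $\be q_2 = f_3$, so $\be^\sharp$ acquires \emph{no} sign because $q_2$ suspends without one. The lift condition $(-\Si \io_2)(\Si \al^\sharp) = -\Si^2 f_1$ rewrites, using that $\Si$ is faithful, as $\io_2 \al^\sharp = \Si f_1$; substituting $\al^\sharp = -\Si \al$ turns this into the original lift condition $\io_2 (\Si \al) = -\Si f_1$. Thus the valid lifts for the two families correspond bijectively via $\al^\sharp = -\Si \al$, whence $\Si \al^\sharp = -\Si^2 \al$. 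The sign is carried by $\al$ precisely because its defining condition is phrased through the connecting map $\io_2$, the one map that suspends with a sign.

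Assembling these identifications gives the pair $(\be^\sharp, \Si \al^\sharp) = (\Si \be, -\Si^2 \al)$, which is exactly the claim. To finish I would note that $(\be, \Si \al) \mapsto (\Si \be, -\Si^2 \al)$ is a genuine bijection: the data $(\al, \be)$ for $T(f_3, f_2, f_1)$ and the data $(\al^\sharp, \be^\sharp)$ for $T(\Si f_3, \Si f_2, \Si f_1)$ realized with the cofiber $\Si C_{f_2}$ correspond via $\al^\sharp = -\Si \al$ and $\be^\sharp = \Si \be$, invertible since $\Si$ is an equivalence, and letting $C_{f_2}$ range over all cofibers of $f_2$ sweeps out both families completely.

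The routine part is the diagram matching of the second paragraph; the only real obstacle is bookkeeping the signs, in particular being certain that the sign attaches to the connecting map $\io_2$ (and hence to $\al$) rather than to $q_2$ (and hence to $\be$). I would guard against an error here by cross-checking on composites: the construction predicts that the composite of the suspended pair is $\Si \be \circ (-\Si^2 \al) = -\Si(\be \circ \Si \al)$, so that $\lan \Si f_3, \Si f_2, \Si f_1 \ran = -\Si \lan f_3, f_2, f_1 \ran$, recovering the familiar sign picked up when a $3$-fold Toda bracket is suspended.
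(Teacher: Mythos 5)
Your proof is correct and follows essentially the same route as the paper: rotate the distinguished triangle on $f_2$ so that the suspended triangle on $\Si f_2$ carries a single minus sign on one outer map, and observe that this sign lands on the map governing the lift $\al$ (you phrase it via the connecting map $\io_2$, the paper via the fiber inclusion $\Si^{-1}C_{f_2}\to X_1$), while the extension $\be$ suspends untouched. The cross-check on composites at the end is a nice sanity check but the argument is the same.
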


\begin{proof}
Given a distinguished triangle $\Si^{-1} C_{f_2} \ral{k} X_1 \ral{f_2} X_2 \ral{\iota} C_{f_2}$,
a distinguished triangle involving $\Si f_2$ is
\[
C_{f_2} \ral{-\Si k} \Si X_1 \ral{\Si f_2} \Si X_2 \ral{\Si \iota} \Si C_{f_2} .
\]
Because of the minus sign at the left, the maps that arise in the Toda
family based on this triangle are $-\Si^2 \al$ and $\Si \be$,
where $\Si \al$ and $\be$ arise in the Toda family based on the starting triangle.
\end{proof}

Given a triangulated category $\cat{T}$, the opposite category $\cat{T}^{\opp}$
is triangulated in a natural way.  The suspension in $\cat{T}^{\opp}$ is $\Sigma^{-1}$
and a triangle
\[
\xymatrix{Y_0 \ar[r]^{g_1} & Y_1 \ar[r]^{g_2} & Y_2 \ar[r]^-{g_3} & \Sigma^{-1} Y_0}
\]
in $\cat{T}^{\opp}$ is distinguished if and only if the triangle
\[
\xymatrix{\Sigma \Sigma^{-1} Y_0 & Y_1 \ar[l]_-{g_1'} & Y_2 \ar[l]_-{g_2} & \Sigma^{-1} Y_0 \ar[l]_-{g_3} }
\]
in $\cat{T}$ is distinguished, where $g_1'$ is the composite of $g_1$ with
the natural isomorphism $Y_0 \cong \Sigma \Sigma^{-1} Y_0$.

\begin{cor}\label{co:SelfDual}
The Toda bracket is self-dual up to suspension.
More precisely, let
$X_0 \ral{f_1} X_1 \ral{f_2} X_2 \ral{f_3} \cdots \ral{f_n} X_n$
be a diagram in a triangulated category $\cT$.
Then the subset
\[
  \lan f_1, \ldots, f_n \ran^{\cTo} \subseteq \cTo(\Si^{-(n-2)} X_n, X_0)
= \cT(X_0, \Si^{-(n-2)} X_n)
\]
defined by taking the Toda bracket in $\cTo$ is sent to the subset
\[
  \lan f_n, \ldots, f_1 \ran^{\cT} \subseteq \cT(\Si^{n-2} X_0, X_n)
\]
defined by taking the Toda bracket in $\cT$ under the bijection
$\Si^{n-2} : \cT(X_0, \Si^{-(n-2)} X_n) \to \cT(\Si^{n-2} X_0, X_n)$.
\end{cor}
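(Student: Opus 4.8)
The plan is to build the duality from two facts established in this section: that a single Toda family is self-dual, and that Theorem~\ref{th:n-fold} together with Lemma~\ref{le:suspension-of-Toda-family} lets one transport this self-duality through the entire inductive computation while tracking signs. I would begin with the base case $n = 3$ (the case $n=2$ being immediate, since both brackets reduce to the single composite and $\Si^0 = \id$). The fiber-cofiber Toda family of Definition~\ref{def:TodaFamily}, Diagram~\eqref{eq:FibCof}, treats the fiber and cofiber of $f_2$ symmetrically: $\al$ is a lift witnessing $f_2 f_1 = 0$ and $\be$ is an extension witnessing $f_3 f_2 = 0$. Passing to $\cTo$ interchanges fibers and cofibers and replaces $\Si$ by $\Si^{-1}$; under the natural identification of the $\cTo$-cofiber of $f_2$ with the $\cT$-fiber $\Si^{-1} C_{f_2}$, and of the $\cTo$-fiber of $f_2$ with the $\cT$-cofiber $C_{f_2}$, the $\cTo$-lift becomes the $\cT$-extension and the $\cTo$-extension becomes the $\cT$-lift. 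A direct chase then shows that the $\cTo$-composite $\be \circ \Si^{-1}\al$, carried through these identifications and then through the bijection $\Si$, returns the $\cT$-composite $\be \circ \Si\al$ exactly. Thus $T(f_1,f_2,f_3)$ computed in $\cTo$ corresponds to $T(f_3,f_2,f_1)$ computed in $\cT$ under $\Si$, with no sign.

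Next I would use the iterated-Toda-family notation introduced before Theorem~\ref{th:n-fold}. Computing $\lan f_1, \ldots, f_n \ran$ in $\cTo$ by the standard sequence $T_0(T_0(\cdots))$ applies the first $\cTo$-Toda family to the triple $(f_1, f_2, f_3)$, that is, at the $f_1$-end of the diagram. Because reversing the order of a composable sequence turns ``leftmost'' into ``rightmost'', and because $\Si^{\opp} = \Si^{-1}$, translating each step into $\cT$ by the base case identifies the standard $\cTo$-computation with the $\cT$-computation carried out by the sequence $j_i = i-1$ for all $i$, which always selects the rightmost consecutive triple. I would verify this step by step: at the $k$-th stage the active $\cTo$-triple is the dual of the rightmost active $\cT$-triple, so each application of the $\cTo$-Toda family matches exactly one application of the $\cT$-Toda family via the base case.

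Finally I would collect the signs. On one side, Theorem~\ref{th:n-fold} equates the $j_i = i-1$ computation with the standard one up to $(-1)^{\sum_i (i-1)} = (-1)^{\binom{n-2}{2}}$. On the other side, the $\cTo$-computation applies $\Si^{\opp} = \Si^{-1}$ to every not-yet-consumed map at each stage, so the translated computation uses suspended copies of $f_4, \ldots, f_n$ where the genuine rightmost $\cT$-computation uses unsuspended ones; by Lemma~\ref{le:suspension-of-Toda-family} each such suspension of a Toda family contributes a sign, and a count (the map $f_{k+3}$ is suspended $k$ times, for $1 \le k \le n-3$) again produces the exponent $\binom{n-2}{2}$. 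The two contributions cancel, leaving overall sign $+1$, which is exactly what the stated form of the corollary demands: by Remark~\ref{re:3-fold-negation} a stray sign could not simply be absorbed, since a Toda bracket need not equal its own negative. The main obstacle is precisely this sign reconciliation. The three maps feeding a given Toda-family step generally sit at different suspension levels, so Lemma~\ref{le:suspension-of-Toda-family} does not apply verbatim and must be supplemented by the finer sign-propagation principle from the proof of Theorem~\ref{th:n-fold}, namely that negating a single map in a tuple negates the resulting family; matching this bookkeeping to the combinatorial exponent $\binom{n-2}{2}$ is the delicate heart of the argument.
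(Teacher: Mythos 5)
Your proposal is correct and follows essentially the same route as the paper's proof: establish the duality at the level of a single Toda family (transpose and desuspend), observe that the standard $\cTo$-computation dualizes to the rightmost-first $\cT$-computation with sequence $j_i = i-1$, and cancel the sign $(-1)^{0+1+\cdots+(n-3)}$ from Theorem~\ref{th:n-fold} against the sign $(-1)^{1+2+\cdots+(n-3)}$ produced by passing the accumulated desuspensions through the Toda families via Lemma~\ref{le:suspension-of-Toda-family}. Your only deviation is the closing caveat about non-uniform suspension levels: in fact at each stage all three inputs are uniformly desuspended by the same amount (up to propagated signs), so the lemma applies directly, but invoking the sign-propagation observation from the proof of Theorem~\ref{th:n-fold} is a harmless and correct precaution.
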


\begin{proof}
First we compare Toda families in $\cT$ and $\cTo$.
It is easy to see that the Toda family
$T^{\cTo}(f_1, f_2, f_3)$
computed in $\cTo$ consists of the pairs
$(\al, \Si^{-1} \be)$ for $(\Si \al, \be)$ in the Toda family
$T^{\cT}(f_3, f_2, f_1)$ computed in $\cT$.
In short, one has to desuspend and transpose the pairs.

Using this, one can see that the iterated Toda family
\[
T^{\cTo}(T^{\cTo} \cdots T^{\cTo}(f_1, f_2, f_3), \ldots, \Si^{-(n-3)} f_n)
\]
is equal to the transpose of
\[
\Si^{-1} T^{\cT}(\Si^{-(n-3)} f_n, \Si^{-1} T^{\cT}(\Si^{-(n-4)} f_{n-1}, \Si^{-1} T^{\cT} \cdots \Si^{-1} T^{\cT}(f_3, f_2, f_1) \cdots ))
\]
By Lemma~\ref{le:suspension-of-Toda-family}, the desuspensions pass
through all of the Toda family constructions, introducing an overall
sign of $(-1)^{1+2+3+\cdots+(n-3)}$, and producing
\[
\Si^{-(n-2)} T^{\cT}(f_n, T^{\cT}(f_{n-1}, T^{\cT} \cdots T^{\cT}(f_3, f_2, f_1) \cdots ))
\]
By Theorem~\ref{th:n-fold}, composing the pairs gives the usual
Toda bracket up to the sign\break $(-1)^{0+1+2+\cdots+(n-3)}$.
The two signs cancel, yielding the result.
\end{proof}

We do not know a direct proof of this corollary.
To summarize, our insight is that
by generalizing the corollary to all $(n-2)!$ methods of computing the
Toda bracket, we were able to reduce the argument to the $4$-fold case (Lemma~\ref{le:four-fold}) and some combinatorics.

\begin{rem}
As with the $3$-fold Toda brackets (see Remark~\ref{re:3-fold-negation}),
the higher Toda brackets depend on the triangulation.
If the triangulation is negated, the $n$-fold Toda brackets change
by the sign $(-1)^n$.
\end{rem}

\section{Higher order operations determine \texorpdfstring{$d_r$}{dr}}\label{se:AdamsDr}

In this section, we show that the higher Adams differentials can be
expressed in terms of higher Toda brackets, in two ways.
One of these expressions is as an $r^{\text{th}}$ order cohomology operation.

Given an injective class $\cat{I}$,
an Adams resolution of an object $Y$ as in Diagram~\eqref{eq:AdamsResolInj}, and an object $X$, consider a class $[x] \in E_r^{s,t}$ represented by an element $x \in E_1^{s,t} = \cat{T}(\Si^{t-s} X, I_s)$.
The class $d_r[x]$ is the set of all $(\Si p_{s+r}) \tild{x}$, where $\tild x$
runs over lifts of $\de_s x$ through the $(r-1)$-fold composite $\Si(i_{s+1} \cdots i_{s+r-1})$
which appears across the top edge of the Adams resolution.

Our first result will be a generalization of
Proposition~\ref{pr:DifferentD2}\eqref{it:d1pdex},
expressing $d_r$ in terms of an $(r+1)$-fold Toda bracket.

\begin{thm}\label{th:d1pdex}
As subsets of $E_1^{s+r,t+r-1}$, we have
\[
d_r [x] = \lan \Si^{r-1} d_1 , \ldots , \Si^2 d_1, \Si d_1 , \Si p_{s+1} ,  \de_s x \ran .
\]
\end{thm}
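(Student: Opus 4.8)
The plan is to set up an inductive computation of the $(r+1)$-fold Toda bracket $\lan \Si^{r-1} d_1, \ldots, \Si d_1, \Si p_{s+1}, \de_s x \ran$ that mirrors the $r$-fold lifting process defining $d_r[x]$. By Theorem~\ref{th:n-fold}, I am free to choose whichever bracketing is most convenient, so I will always apply the Toda family construction to the \emph{rightmost} three maps. Concretely, I will use the sequence $j_i = 0$ for all $i$, which computes the bracket as $T_0(T_0(\cdots T_0(\Si^{r-1}d_1, \ldots, \Si p_{s+1}, \de_s x)\cdots))$, stripping off the two innermost maps $\Si p_{s+1}$ and $\de_s x$ first and then successively absorbing one $\Si^k d_1$ at a time. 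As in Convention~\ref{co:SuspensionIso}, I will silently suppress suspension isomorphisms and assume $t = s$ without loss of generality, since $t$ plays no role.

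The base step is exactly Proposition~\ref{pr:DifferentD2}\eqref{it:d1pdex}, which identifies the innermost fiber-cofiber bracket $\lan \Si d_1, \Si p_{s+1}, \de_s x\ran$ with the $d_2$ computation: choosing a lift $\tild x$ of $\de_s x$ to the fiber of $\Si p_{s+1}$ is precisely the first lifting step, and extending along the $\cat{I}$-epic map $\Si\de_{s+1}$ to the injective $\Si^2 I_{s+2}$ is unique. I would then carry out the inductive step: having reduced to a bracket whose innermost pair is $(\Si^k p_{s+k}, \tild x^{(k)})$ for a partial lift $\tild x^{(k)} \colon \Si^{t-s} X \to \Si^k Y_{s+k}$, I form the Toda family with the next map $\Si^k d_1 = (\Si^{k+1} p_{s+k+1})(\Si^k \de_{s+k})$. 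The key point, repeated from part~(1) of Proposition~\ref{pr:DifferentD2}, is that because $\Si^k\de_{s+k}$ is $\cat{I}$-epic and $\Si^{k+1}I_{s+k+1}$ is injective, the restriction map along $(\Si^k\de_{s+k})^*$ is injective, so $\Si^{k+1}p_{s+k+1}$ is the unique cofiber-extension of $\Si^k d_1$; this lets me replace extending along $\Si^k d_1$ by the cleaner datum of extending along $\Si^k\de_{s+k}$ and postcomposing with $\Si^{k+1}p_{s+k+1}$. The fiber-cofiber bracket then produces a new lift $\tild x^{(k+1)}$ of $\tild x^{(k)}$ through $\Si^{k+1}i_{s+k}$, which is the next stage of the Adams lifting, and the process terminates with $(\Si^r p_{s+r})\tild x$ as desired.

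The main obstacle I expect is bookkeeping rather than conceptual: I must verify that the successive fiber lifts produced by the Toda family construction range over \emph{exactly} the same set as the lifts $\tild x$ through the composite $\Si(i_{s+1}\cdots i_{s+r-1})$ that define $d_r[x]$, including that the indeterminacy matches. The inclusion that the Adams lifts yield valid bracket elements should follow directly by unwinding the distinguished triangles, as in the $(\supseteq)$ direction of Proposition~\ref{pr:DifferentD2}\eqref{it:d1d1x}. The reverse inclusion is the delicate part: a lift arising in the Toda family need not a priori be a lift through the correct composite, so I anticipate needing a correction argument, again patterned on the $(\subseteq)$ direction of Proposition~\ref{pr:DifferentD2}, where an error term is shown to factor through a fiber and is absorbed without changing the relevant composite. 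Making these corrections compatible across all $r-1$ stages, while keeping track of the signs guaranteed by Theorem~\ref{th:n-fold} (here all $j_i = 0$, so the sign is $+1$), is where the real care is required.
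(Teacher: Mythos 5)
Your core mechanism is the right one and is essentially the paper's: peel off the three maps $\Si d_1$, $\Si p_{s+1}$, $\de_s x$ first, observe that the extension of $\Si d_1$ over the cofiber of $\Si p_{s+1}$ is forced to be $\Si^2 p_{s+2}$ because $\Si \de_{s+1}$ is $\cat{I}$-epic and $\Si^2 I_{s+2}$ is injective, and identify the fiber lifts with the successive Adams lifts. In fact the reverse inclusion you flag as delicate is not delicate here: since the extension is unique at every stage, each Toda family is a single pair up to isomorphism, the lifts range over exactly the iterated lifts through the maps $\Si i_{s+j}$, and an iterated lift is the same thing as a lift through the composite $\Si(i_{s+1}\cdots i_{s+r-1})$ --- no correction argument in the style of Proposition~\ref{pr:DifferentD2} is needed.

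The genuine gap is in the sign bookkeeping, which you dismiss with ``all $j_i=0$, so the sign is $+1$''. First, the sequence $j_i=0$ is the \emph{standard} order, which applies the Toda family to $f_n, f_{n-1}, f_{n-2}$ --- that is, to $\Si^{r-1}d_1, \Si^{r-2}d_1, \Si^{r-3}d_1$, the three maps written leftmost in the bracket --- not to $\Si d_1, \Si p_{s+1}, \de_s x$. The order you actually describe (innermost three maps first) is the sequence $j_i = i-1$, and Theorem~\ref{th:n-fold} then charges a sign of $(-1)^{0+1+\cdots+(r-2)}$, which is not $+1$ in general (it is $-1$ already for $r=3$). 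Second, you have omitted the other source of signs: after the first step the new innermost triple is $(\Si^2 d_1, \Si^2 p_{s+2}, \Si x_1)$, a suspension of the configuration just handled, and by Lemma~\ref{le:suspension-of-Toda-family} each suspension passing through a Toda family negates one entry of the resulting pair; over the whole induction this contributes another factor of $(-1)^{1+2+\cdots+(r-2)}$. The content of the paper's sign analysis is precisely that these two contributions cancel. Since the theorem asserts an equality of subsets, not an equality up to sign, and since Toda brackets are not in general equal to their own negatives (Remark~\ref{re:3-fold-negation}, Example~\ref{ex:negative}), this cannot be waved away: as written, your argument either computes the wrong bracketing or establishes the identity only up to an undetermined sign.
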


\begin{proof}
We compute the Toda bracket, applying the Toda family construction starting from
the right, which introduces a sign of $(-1)^{1+2+\cdots+(r-2)}$, by Theorem~\ref{th:n-fold}.
We begin with the Toda family $\TF(\Si d_1, \Si p_{s+1}, \de_s x)$.
There is a distinguished triangle
\[
  \cxymatrix{\Si Y_{s+2} \ar[r]^-{\Si i_{s+1}} & \Si Y_{s+1} \ar[r]^-{\Si p_{s+1}} & \Si I_{s+1} \ar[r]^-{\Si \de_{s+1}} & \Si^2 Y_{s+2},}
\]
with no needed signs.
The map $\Si d_1$ factors through $\Si \de_{s+1}$ as $\Si^2 p_{s+2}$, and this
factorization is unique because $\Si \de_{s+1}$ is $\cat{I}$-epic and $\Si^2 I_{s+2}$ is injective.
The other maps in the Toda family are $\Si x_1$, where $x_1$ is a lift 
of $\de_s x$ through $\Si i_{s+1}$.
So 
\[
  \TF(\Si d_1, \Si p_{s+1}, \de_s x) = \{ (\Si^2 p_{s+2} , \, \Si x_1) \mid x_1 \text{ a lift of $\de_s x$ through $\Si i_{s+1}$}  \}.
\]
(The Toda family also includes $(\Si^2 p_{s+2} \, \phi, \, \phi^{-1} (\Si x_1))$, where $\phi$
is any isomorphism, but these contribute nothing additional to the later computations.)
The composites of such pairs give $d_2[x]$, up to suspension, recovering
Proposition~\ref{pr:DifferentD2}\eqref{it:d1pdex}.

Continuing, for each such pair we compute
\[
\begin{aligned}
  \TF(\Si^2 d_1, \Si^2 p_{s+2}, \Si x_1)
&= -\Si \TF(\Si d_1, \Si p_{s+2}, x_1) \\
&= -\Si \{ (\Si^2 p_{s+3} , \, \Si x_2) \mid x_2 \text{ a lift of $x_1$ through $\Si i_{s+2}$}  \}.
\end{aligned}
\]
The first equality is Lemma~\ref{le:suspension-of-Toda-family}, and the second reuses
the work done in the previous paragraph, with $s$ increased by $1$.
Composing these pairs gives $-d_3[x]$.
The sign which is needed to produce the standard Toda bracket is $(-1)^1$,
and so the signs cancel.

At the next step, we compute
\[
\begin{aligned}
  \TF(\Si^3 d_1, \Si^3 p_{s+3}, -\Si^2 x_2)
&= -\Si^2 \TF(\Si d_1, \Si p_{s+3}, x_2) \\
&= -\Si^2 \{ (\Si^2 p_{s+4} , \, \Si x_3) \mid x_3 \text{ a lift of $x_2$ through $\Si i_{s+3}$}  \}.
\end{aligned}
\]
Again, the composites give $-d_4[x]$.
Since it was a double suspension that passed through the Toda family, no additional
sign was introduced.
Similarly, the sign to convert to the standard Toda bracket is $(-1)^{1+2}$,
and since $2$ is even, no additional sign was introduced.
Therefore, the signs still cancel.

The pattern continues.  
In total, there are $1+2+\cdots+(r-2)$ suspensions that pass through the Toda
family, and the sign to convert to the standard Toda bracket is also based on
that number, so the signs cancel.
\end{proof}

\begin{rem}
Theorem~\ref{th:d1pdex} can also be proved using the definition Toda
brackets based on $r$-filtered objects, 
as in Definitions~\ref{def:NFiltered} and~\ref{def:HigherTodaSS}.
However, one must work in the opposite category $\cTo$.
In that category, there is a unique $r$-filtered object, up to isomorphism,
based on the maps in the Toda bracket.
One of the dashed arrows in the diagram from Definition~\ref{def:HigherTodaSS}
is also unique, and the other corresponds naturally to the choice of lift
in the Adams differential.
\end{rem}

\medskip

In the remainder of this section, we describe the analog of
Proposition~\ref{pr:DifferentD2}\eqref{it:d1d1x}.
We begin by defining restricted higher Toda brackets, in terms of
restricted Toda families.

Consider a Toda family $\TF(g h_1, g_1 h_0, g_0 h)$, where the maps
factor as shown, there are distinguished triangles
\begin{equation}\label{eq:tri}
  \cxymatrix{Z_i \ar[r]^{g_i} & J_i \ar[r]^{h_i} & Z_{i+1} \ar[r]^{k_i} & \Si Z_i}
\end{equation}
for $i = 0, 1$,
and $g$ and $h$ are arbitrary maps $Z_2 \to A$ and $B \to Z_0$, respectively.
This information determines an essentially unique element of the Toda family in the following way.
The octahedral axiom applied to the factorization $g_1 h_0$
yields the dotted arrows in a commutative diagram
\[
\cxymatrix{
J_0 \ar@{=}[d] \ar[r]^-{h_0} & Z_1 \ar[d]_(0.45){g_1} \ar[r]^-{k_0} & \Si Z_0 \ar@{-->}[d]^{\al_2} \ar[r]^-{-\Si g_0} & \Si J_0 \ar@{=}[d] \\
J_0 \ar[r]^-{g_1 h_0} & J_1 \ar[d]_{h_1} \ar@{-->}[r]^-{q} & W_2 \ar@{-->}[d]^{\be_2} \ar@{-->}[r]^-{\io} & \Si J_0 \\
& Z_2 \ar[d]_{k_1} \ar@{=}[r] & Z_2 \ar[d]^{\ga_2} & \\
& \Si Z_1 \ar[r]^{\Si k_0} & \Si^2 Z_0 , & \\
}
\]
where the rows and columns are distinguished and $\ga_2 := (\Si k_0) k_1$.
It is easy to see that $-\Si(g_0 h)$ lifts through $\io$ as $\al_2 (\Si h)$,
and that $g h_1$ extends over $q$ as $g \be_2$.
We define the \Def{restricted Toda family} to be the set 
$\TF(g h_1 \fix g_1 h_0 \fix g_0 h)$ consisting of the pairs $(g \be_2, \, \al_2 (\Si h))$
that arise in this way.
Since $\al_2$ and $\be_2$ come from a distinguished triangle involving a fixed map $\ga_2$,
such pairs are unique up to the usual ambiguity of replacing
the pair with $(g \be_2 \phi, \, \phi^{-1} \al_2 (\Si h))$, where $\phi$ is an isomorphism.
Similarly, given any map $x : B \to J_0$,
we define $\TF(g h_1 \fix g_1 h_0 , x)$ to be the set 
consisting of the pairs  $(g \be_2, \, \Si \al)$,
where $\be_2$ arises as above and $\Si \al$ is any lift of $-\Si x$ through $\io$.

\begin{defn}
Given distinguished triangles as in Equation~\eqref{eq:tri}, for $i = 1, \ldots, n-1$,
and maps $g : Z_n \to A$ and $x : B \to J_1$, we define the \Def{restricted Toda bracket}
\[
\lan g h_{n-1} \fix g_{n-1} h_{n-2} \fix \ldots \fix g_3 h_2 \fix g_2 h_1 , x \ran
\]
inductively as follows:
If $n = 2$, it is the set consisting of just the composite $g h_1 x$.
If $n = 3$, it is the set of composites of the pairs in $\TF(g h_2 \fix g_2 h_1 , x)$.
If $n > 3$, it is the union of the sets
\[
\lan g \be_2 \fix\, \al_2 (\Si h_{n-3}) \fix\, \Si (g_{n-3} h_{n-4}) \fix \ldots , \Si x \ran ,
\]
where $(g \be_2, \al_2 (\Si h_{n-3}))$ is in $\TF(g h_{n-1} \fix g_{n-1} h_{n-2} \fix g_{n-2} h_{n-3})$.
\end{defn}

\begin{rem}
Despite the notation, we want to make it clear that these restricted
Toda families and restricted Toda brackets depend on the choice of
factorizations and on the distinguished triangles in Equation~\eqref{eq:tri}.
Moreover, the elements of the restricted Toda families are not simply pairs,
but also include the factorizations of the maps in those pairs, and
the distinguished triangle involving $\al_2$ and $\be_2$.
This information is used in the $(n-1)$-fold restricted Toda bracket
that is used to define the $n$-fold restricted Toda bracket.
\end{rem}

Recall that the maps $d_1$ are defined to be $(\Si p_{s+1}) \de_s$, and that we
have distinguished triangles
\[
  \cxymatrix{Y_s \ar[r]^{p_s} & I_s \ar[r]^-{\de_s} & \Si Y_{s+1} \ar[r]^{\Si i_s} & \Si Y_s}
\]
for each $s$.
The same holds for suspensions of $d_1$, with the last map
changing sign each time it is suspended.  
Thus for $x : \Si^{t-s} X \to I_s$ in the $E_1$ term, the $(r+1)$-fold restricted Toda bracket
$\lan \Si^{r-1} d_1 \fix \ldots \fix \Si d_1 \fix d_1 , x \ran$
makes sense for each $r$, where we are implicitly using the defining factorizations
and the triangles from the Adams resolution.

\begin{thm}\label{th:AdamsDrCohomOp}
As subsets of $E_1^{s+r,t+r-1}$, we have
\[
d_r [x] = \lan \Si^{r-1} d_1 \fix \ldots \fix \Si d_1 \fix d_1 , x \ran .
\]
\end{thm}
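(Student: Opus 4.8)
The plan is to deduce this from Theorem~\ref{th:d1pdex} by an induction on $r$ that generalizes the secondary case, Proposition~\ref{pr:DifferentD2}\eqref{it:d1d1x}. Since Theorem~\ref{th:d1pdex} already gives $d_r[x] = \lan \Si^{r-1} d_1, \ldots, \Si d_1, \Si p_{s+1}, \de_s x \ran$, it suffices to show that the restricted bracket of the statement equals this expression. First I would match the abstract data of the restricted Toda bracket with the Adams resolution of $Y$: taking the distinguished triangles of Equation~\eqref{eq:tri} to be the $\Si^{i-1}$-suspensions of $Y_{s+i-1} \ral{p_{s+i-1}} I_{s+i-1} \ral{\de_{s+i-1}} \Si Y_{s+i}$, one checks that the factored maps $g_i h_{i-1}$ become $\Si^{i-2} d_1$, the leftmost map $g h_{n-1}$ becomes $\Si^{r-1} d_1 = (\Si^r p_{s+r})(\Si^{r-1}\de_{s+r-1})$ with free part $g = \Si^r p_{s+r}$, and the starting map is $x \colon \Si^{t-s} X \to I_s = J_1$. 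Thus $\lan \Si^{r-1} d_1 \fix \ldots \fix \Si d_1 \fix d_1, x \ran$ is genuinely the restricted bracket attached to this tower, with the innermost lift $\al$ left free and each extension pinned down by the octahedral witness.

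The base cases are immediate: for $r = 1$ both sides reduce to the composite $d_1 x = (\Si p_{s+1})(\de_s x)$, and for $r = 2$ the claim is exactly Proposition~\ref{pr:DifferentD2}\eqref{it:d1d1x}, in which the octahedral witness $\be$ built from the factorization $d_1 = (\Si p_{s+1})\de_s$ is precisely the map $g\be_2 = (\Si^2 p_{s+2})\be_2$ of the restricted Toda family $T(\Si d_1 \fix d_1, x)$, while the free lift $\al$ recovers all of $d_2[x]$. For the inductive step I would unwind the inductive definition of the restricted bracket, peeling off the top three maps $\Si^{r-1} d_1, \Si^{r-2} d_1, \Si^{r-3} d_1$ via the octahedral axiom applied to the middle one (this is the diagram of Proposition~\ref{pr:DifferentD2}\eqref{it:d1d1x}, suspended by $\Si^{r-2}$ and placed at the top of the tower). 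This produces the witness $\be_2$ and the new free map $(\Si^r p_{s+r})\be_2$, after which Lemma~\ref{le:suspension-of-Toda-family} lets me pull the suspensions through the remaining Toda-family constructions and invoke the induction hypothesis on the shorter restricted bracket, exactly as the unrestricted computation was reused at each stage in the proof of Theorem~\ref{th:d1pdex}.

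The equality itself I would establish by two inclusions, mirroring Proposition~\ref{pr:DifferentD2}\eqref{it:d1d1x}. For $(\supseteq)$ one checks that any element of the restricted bracket, assembled from the octahedral witnesses together with a free innermost lift, composes to a valid representative $(\Si^r p_{s+r})\tild{x}$ of $d_r[x]$, the witnesses guaranteeing that the intermediate lifts land in the correct fibers. For $(\subseteq)$, given a representative $(\Si^r p_{s+r})\tild{x}$ coming from a lift $\tild{x}$ of $\de_s x$ up the tower, I would factor $\tild{x}$ through the staircase of cofibers $C_{d_1}$ and correct the chosen lifts $\al$ one level at a time, as the map $\al$ was corrected to $\al'$ in the earlier proof, so that the intermediate compatibility equations $\io(\Si \al') = -\Si x$ hold without disturbing the composite. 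The suspension signs introduced by Lemma~\ref{le:suspension-of-Toda-family} cancel the reordering sign of Theorem~\ref{th:n-fold} for the same arithmetic reason ($1 + 2 + \cdots + (r-2)$ counts both), so no net sign survives.

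The hard part will be the bookkeeping of this iterated octahedral staircase, and in particular reconciling the left-to-right reduction built into the \emph{definition} of the restricted bracket with the bottom-up lifting that defines $d_r$: one must run the correction-of-lifts argument simultaneously at all $r-1$ levels while keeping the witnesses $\be_2$ mutually compatible. The cleanest way to organize this — and the step I expect to require the most care — is to first prove a restricted analogue of Lemma~\ref{le:four-fold}, namely that the two orders of applying the restricted Toda family to four composable maps yield families that are negatives of each other, with the octahedral witness transported correctly. Once such an associativity statement for the restricted construction is in hand, the sign analysis and the reduction to lower length transfer essentially verbatim from the proof of Theorem~\ref{th:d1pdex}, and the induction closes.
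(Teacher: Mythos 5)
Your proposal contains the right raw ingredients --- unwinding the recursive definition of the restricted bracket through iterated octahedra, observing that each doubly restricted Toda family is essentially a singleton, and finishing with a two-inclusion comparison with the lifting definition of $d_r[x]$ using a correction-of-lifts argument as in Proposition~\ref{pr:DifferentD2}\eqref{it:d1d1x}. But the scaffolding you wrap them in has a genuine gap. The restricted bracket is defined by peeling off the three \emph{leftmost} maps, and after one such step the resulting shorter restricted bracket is $\lan (\Si^r p_{s+r})\be_2 \fix\, \al_2(\Si^{r-3}\de_{s+r-3}) \fix\, \Si^{r-3} d_1 \fix \cdots, \Si x \ran$: its two leading entries are the octahedral witnesses, not suspensions of $d_1$, so it is not an instance of the theorem for $r-1$ and the induction hypothesis you want to invoke does not apply to it. The analogy with Theorem~\ref{th:d1pdex} is misleading here: there the self-similarity occurs at the \emph{right} end and reproduces a triple of exactly the same shape with $s$ shifted, which is why that computation could be reused verbatim; no such exact self-similarity holds at the left end. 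Your proposed fix --- a restricted analogue of Lemma~\ref{le:four-fold} --- is also not the right tool: the restricted bracket admits only one order of evaluation by definition, so there is no reordering to justify, and it is far from clear how the factorization data and the fixed octahedral witnesses would be transported under such a reassociation. Relatedly, your first paragraph routes the argument through Theorem~\ref{th:d1pdex} while your third paragraph compares directly with the lifts $\tild{x}$ defining $d_r[x]$; only the latter comparison is actually needed, and the sign bookkeeping via $1+2+\cdots+(r-2)$ that you import from Theorem~\ref{th:n-fold} plays no role in it.

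What replaces your induction on $r$ in the paper is a direct computation, which is in effect an induction on the number $k$ of octahedra applied, with a \emph{strengthened} inductive statement: the $k$-th doubly restricted family produces a distinguished triangle with third map $\ga_k = -(-1)^{rk}\,\Si^r(i_{s+r-k}\cdots i_{s+r-1})$, i.e.\ (up to sign) the $k$-fold composite of the maps $i$ from the Adams resolution. Since each doubly restricted family is a singleton, after $r-2$ steps one is left with a single singly restricted family governed by the triangle whose third map is $\ga_{r-1} = -\Si^r(i_{s+1}\cdots i_{s+r-1})$; a final octahedron then identifies the free lifts through $\io_{r-1}$ with the lifts of $\Si^{r-1}(\de_s x)$ through that $(r-1)$-fold composite, which is literally the definition of $d_r[x]$. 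That identification of the $\ga_k$ is the step your sketch omits, and without it (or an equivalent strengthened induction) the argument does not close. Once you replace "invoke the induction hypothesis for $r-1$" by this explicit tracking of $\ga_k$, your paragraph on the two inclusions goes through as written and recovers the paper's proof.
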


This is a generalization of Proposition~\ref{pr:DifferentD2}\eqref{it:d1d1x}.
The data in the Adams resolution is the witness that the composites of
the primary operations are zero in a sufficiently coherent way to permit
an $r^{\text{th}}$ order cohomology operation to be defined.

\begin{proof}
The restricted Toda bracket $\lan \Si^{r-1} d_1 \fix \ldots \fix \Si d_1 \fix d_1 , x \ran$
is defined recursively, working from the left.
Each of the $r-2$ doubly restricted Toda families has essentially one element.
The first one involves maps $\al_2$, $\be_2$ and $\ga_2$ that form a distinguished
triangle, and $\ga_2$ is equal to $[(-1)^r \Si^r i_{s+r-2}][-(-1)^r \Si^r i_{s+r-1}]$.
We will denote the corresponding maps in the following octahedra $\al_k$,  $\be_k$ and $\ga_k$,
where each $\ga_k$ equals $[(-1)^r \Si^r i_{s+r-k}] \, \ga_{k-1}$,
and so $\ga_k = -(-1)^{rk} \Si^r (i_{s+r-k} \cdots i_{s+r-1})$.
One is left to compute the singly restricted Toda family
$\lan \Si^r p_{s+r} \be_{r-1} \fix\, \al_{r-1} \Si^{r-2} \de_s ,\, \Si^{r-2} x \ran$,
where $\al_{r-1}$ and $\be_{r-1}$ fit into a distinguished triangle
\[
  \cxymatrix{\Si^{r-1 }Y_{s+1} \ar[r]^-{\al_{r-1}} & W_{r-1} \ar[r]^-{\be_{r-1}} & \Si^{r} Y_{s+r} \ar[r]^-{\ga_{r-1}} & \Si^r Y_{s+1} ,}
\]
and $\ga_{r-1} = - \Si^r (i_{s+1} \cdots i_{s+r-1})$.
Thus, to compute the last restricted Toda bracket, one uses the following diagram,
obtained as usual from the octahedral axiom:
\[
\xymatrix@C+24pt{
& & & \Si^{t-s+r-1} X \ar[d]^(0.45){-\Si^{r-1} x} \\
\Si^{r-2} I_{s} \ar@{=}[d] \ar[r]^-{\Si^{r-2} \de_{s}} & \Si^{r-1} Y_{s+1} \ar[d]_{\al_{r-1}} \ar[r]^-{\!(-1)^r \, \Si^{r-1} i_{s}} & \Si^{r-1} Y_{s} \ar@{-->}[d]^{\al_r} \ar[r]^-{- \Si^{r-1} p_{s}} & \Si^{r-1} I_{s} \ar@{=}[d] \\
\Si^{r-2} I_{s} \ar[r]^-{} & W_{r-1} \ar[d]_{\be_{r-1}} \ar@{-->}[r]^-{q_{r-1}} & W_r \ar@{-->}[d]^{\be_r} \ar@{-->}[r]^-{\io_{r-1}} & \Si^{r-1} I_{s} \\
\Si^r I_{s+r} & \Si^r Y_{s+r} \ar[l]_{\Si^r p_{s+r}} \ar[d]_{\ga_{r-1}} \ar@{=}[r] & \Si^r Y_{s+r} \ar[d]^{\ga_r} & \\
& \Si^r Y_{s+1} \ar[r]^{(-1)^r \, \Si^r i_{s}} & \Si^r Y_{s} . & \\
}
\]
Up to suspension, both $d_r[x]$ and the last restricted Toda bracket are computed by
composing certain maps $\tild{x} : \Si^{t-s+r-2} X \to \Si^r Y_{s+r}$ with $\Si^r p_{s+r}$.
For $d_r[x]$, the maps $\tild{x}$ must lift $\Si^{r-1} (\de_s x)$ through $- \ga_{r-1}$.
For the last bracket, the maps $\tild{x}$ are of the form $\be_r y$,
where $y : \Si^{t-s+r-1} X \to W_r$ is a lift of $-\Si^{r-1} x$ through $\io_{r-1}$.
As in the proof of Proposition~\ref{pr:DifferentD2}\eqref{it:d1d1x}, one can
see that the possible choices of $\tild{x}$ coincide.
\end{proof}

We next give a description of $d_r[x]$ using higher Toda brackets defined
using filtered objects, as in Definitions~\ref{def:NFiltered} and~\ref{def:HigherTodaSS}.
The computation of the restricted Toda bracket above produces a sequence 
\begin{equation}\label{eq:W}
  \cxymatrix{0 = W_0 \ar[r]^-{q_0} & W_1 \ar[r]^{q_1} & \cdots  \ar[r]^{q_{r-1}} & W_r , }
\end{equation}
where $W_k$ is the fibre of the $k$-fold composite $\Si^r (i_{s+r-k} \cdots i_{s+r-1})$.
(The map $\ga_k$ may differ in sign from this composite, but that doesn't affect the fibre.)
For each $k$, we have a distinguished triangle
\[
  \xymatrix@C+3pt{W_k \ar[r]^-{q_k} & W_{k+1} \ar[r]^-{\io_k}
 & \Si^{r-1} I_{s+r-k-1} \ar[rrr]^-{-(\Si \al_k)(\Si^{r-1} \de_{s+r-k-1})} &&& \Si W_k ,}
\]
where we extend downwards to $k=0$ by defining $W_1 = \Si^{r-1} I_{s+r-1}$
and using the non-obvious triangle
\[
  \xymatrix@C+7pt{W_0 \ar[r]^-{q_0 = 0} & W_{1} \ar[r]^-{\io_0 = -1}
 & \Si^{r-1} I_{s+r-1} \ar[r]^-{0} & \Si W_0 .}
\]
One can check that 
\[
(\Si \io_{k-1})(-\Si \al_k)(\Si^{r-1} \de_{s+r-k-1}) = (\Si^r p_{s+r-k})(\Si^{r-1} \de_{s+r-k-1})
= \Si^{r-1} d_1 = \Si^k (\Si^{r-k-1} d_1) ,
\]
where $\Si^{r-k-1} d_1$ is the map appearing in the $(k+1)$st spot of the Toda bracket.
In other words, the sequence~\eqref{eq:W} is an $r$-filtered object based on
$(\Si^{r-2} d_1, \ldots, d_1)$.

The natural map $\si_W : W_r \to \Si^{r-1} I_s$ is $\io_{r-1}$,
and the natural map $\si_W' : \Si^{r-1} I_{s+r-1} \cong W_1 \to W_r$ is the composite
$q_{r-1} \cdots q_1 \io_0 = -q_{r-1} \cdots q_1$.
The Toda bracket computed using the filtered object $W$ consists of all
composites appearing in the middle row of this commutative diagram:
\begin{equation}\label{eq:Wlift}
\cxymatrix{
& \Si^{r-1} I_{s+r-1} \ar[d]_{\si'_W} \ar[dr]^-{\Si^{r-1} d_1}  & \\
\Si^{t-s+r-1} X \ar[dr]_{\Si^{r-1} x} \ar@{-->}[r]^-a & W_r \ar[d]^{\si_W} \ar@{-->}[r]_-b & \Si^r I_{s+r} \\
& \Si^{r-1} I_s . & \\
}
\end{equation}
We claim that there is a natural choice of extension $b$.
Since $\Si^{r-1} d_1 = (\Si^r p_{s+r})(\Si^{r-1} \de_{s+r-1})$, it suffices
to extend $\Si^{r-1} \de_{s+r-1}$ over $\si_W'$.
Well, $\be_2$ by definition is an extension of $\Si^{r-1} \de_{s+r-1}$ over $q_1$,
and each subsequent $\be_k$ gives a further extension.
Because $\io_0 = -1$, $-(\Si^r p_{s+r}) \be_r$ is a valid choice for $b$.

On the other hand, as described at the end of the previous proof,
the lifts $a$ of $\Si^{r-1} x$ through $\si_W = \io_{r-1}$, when 
composed with $-(\Si^r p_{s+r}) \be_r$, give exactly
the Toda bracket computed there.

In summary, we have:

\begin{thm}
Given an Adams resolution of $Y$ and $r \geq 2$, there is an associated
$r$-filtered object $W$ and a choice of a map $b$ in Diagram~\eqref{eq:Wlift},
such that for any $X$ and class $[x] \in E_r^{s,t}$,
we have
\[
d_r [x] = \lan \Si^{r-1} d_1 , \ldots , \Si d_1 , d_1 , x \ran ,
\]
where the Toda bracket is computed only using the $r$-filtered object $W$
and the chosen extension $b$.
\end{thm}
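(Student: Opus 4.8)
The plan is to recognize this theorem as the \emph{filtered-object} repackaging of Theorem~\ref{th:AdamsDrCohomOp}: every ingredient it names has already been assembled in the discussion immediately preceding the statement. Concretely, the sequence~\eqref{eq:W} of fibres $W_k$, the distinguished triangles exhibited there, the identification of the structure maps $\si_W = \io_{r-1}$ and $\si'_W = -q_{r-1} \cdots q_1$, and the extension $b := -(\Si^r p_{s+r}) \be_r$ are all in place. The proof therefore reduces to two verifications: that~\eqref{eq:W} is a bona fide $r$-filtered object based on $(\Si^{r-2} d_1, \ldots, d_1)$, and that the Toda bracket it computes, using the chosen $b$, coincides with $d_r[x]$.

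For the first verification I would check the axioms of Definition~\ref{def:NFiltered} directly. The distinguished triangles are produced for free by the octahedral construction, so the only substantive point is the compatibility condition relating the connecting maps to the structure maps $\Si^{r-k-1} d_1$; this is precisely the displayed identity whose right-hand side is $\Si^k(\Si^{r-k-1} d_1)$. The base index $k=0$ requires the degenerate triangle with $q_0 = 0$ and $\io_0 = -1$, which is distinguished because $W_1 = \Si^{r-1} I_{s+r-1}$ is its own fibre over the zero map. Granting this, the extension $b$ is essentially forced: since $\Si^{r-1} d_1 = (\Si^r p_{s+r})(\Si^{r-1} \de_{s+r-1})$, one only needs to extend $\Si^{r-1} \de_{s+r-1}$ across $\si'_W$, and the maps $\be_k$ supply exactly such an extension, with the single sign coming from $\io_0 = -1$.

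For the second verification, the Toda bracket built from $W$ and $b$ is by definition the set of composites $b \circ a$ as $a$ ranges over lifts of $\Si^{r-1} x$ through $\si_W = \io_{r-1}$. This is precisely the collection analyzed at the close of the proof of Theorem~\ref{th:AdamsDrCohomOp}, where the lifts of $-\Si^{r-1} x$ through $\io_{r-1}$, composed with $-(\Si^r p_{s+r}) \be_r$, were shown to traverse exactly the representatives $(\Si^r p_{s+r}) \tild{x}$ of $d_r[x]$. Invoking that theorem then closes the argument. I expect the sign bookkeeping to be the main obstacle: one must confirm that the sign of $b$ inherited from $\io_0 = -1$, together with the signs built into the filtered-object triangles, conspires so that no spurious overall sign separates the filtered-object bracket from $d_r[x]$. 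This reconciliation has, however, already been carried out in the earlier proof, so it introduces no genuinely new difficulty here.
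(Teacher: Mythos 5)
Your proposal is correct and follows essentially the same route as the paper: the paper's own argument is precisely the discussion preceding the theorem, which verifies that the sequence of fibres $W_k$ forms an $r$-filtered object based on $(\Si^{r-2} d_1, \ldots, d_1)$, takes $b = -(\Si^r p_{s+r})\be_r$, and then identifies the composites $b \circ a$ with the representatives of $d_r[x]$ via the analysis at the end of the proof of Theorem~\ref{th:AdamsDrCohomOp}. Your observation that the sign from $\io_0 = -1$ exactly compensates for the discrepancy between lifting $\Si^{r-1}x$ and lifting $-\Si^{r-1}x$ is the same bookkeeping the paper performs.
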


\section{Sparse rings of operations}\label{se:sparse}

In this section, we focus on injective and projective classes which 
are generated by an object with a ``sparse'' endomorphism ring.
In this context, we can give conditions under which the restricted Toda bracket
appearing in Theorem~\ref{th:AdamsDrCohomOp} is equal to the unrestricted Toda bracket,
producing a cleaner correspondence between Adams differentials and Toda brackets.
We begin in Subsection~\ref{ss:sparse-injective} by giving the results in the case
of an injective class, and then briefly summarize the dual results in Subsection~\ref{ss:sparse-projective}.
Subsection~\ref{ss:sparse-examples} gives examples.

Let us fix some notation and terminology, also discussed in \cite{Sagave08}, \cite{Patchkoria12}, 
\cite{SchwedeS03}*{\S 2}, 
and \cite{BensonKS04}.

\begin{defn}
Let $N$ be a natural number. A graded abelian group $R_*$ is \Def{$N$-sparse} if $R_*$ is concentrated in degrees which are multiples of $N$, i.e., $R_i = 0$ whenever $i \not\equiv 0 \pmod{N}$.
\end{defn}

\subsection{Injective case}\label{ss:sparse-injective}

\begin{nota}
Let $E$ be an object of the triangulated category $\cat{T}$. Define the \Def{$E$-cohomology} of an object $X$ to be the graded abelian group $E^*X$ given by $E^n X := \cat{T}(X,\Si^n E)$. Postcomposition makes $E^*X$ into a left module over the graded endomorphism ring $E^*E$.%
\end{nota}

\begin{assum}
For the remainder of this subsection, we assume the following.
\begin{enumerate}
\item The triangulated category $\cat{T}$ has infinite %
products.
\item The graded ring $E^* E$ is $N$-sparse for some $N \geq 2$.%
\end{enumerate}
\end{assum}

Let $\cat{I}_E$ denote the injective class generated by $E$, as in Example~\ref{ex:InjClass}. Explicitly, $\cat{I}_E$ consists of retracts of (arbitrary) products $\prod_i \Si^{n_i} E$.

\begin{lem}\label{le:SparseInj}
With this setup, we have:
\begin{enumerate}
\item Let $I$ be an injective object such that $E^* I$ is $N$-sparse. Then $I$ is a retract of a product $\prod_i \Si^{m_i N} E$.
\item If, moreover, $W$ is an object such that $E^*W$ is $N$-sparse, then we have $\cat{T}(W,\Si^t I) = 0$ for $t \not\equiv 0 \pmod{N}$. 
\end{enumerate}
\end{lem}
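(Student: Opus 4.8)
The plan is to prove part (1) first and then deduce part (2) formally.

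For part (1), since $I$ is injective, the description of $\cat{I}_E$ provides a product $P = \prod_i \Si^{n_i} E$ together with maps $s \colon I \to P$ and $r \colon P \to I$ satisfying $rs = \id_I$. I would partition the index set as $A \sqcup B$, where $A = \{i : n_i \equiv 0 \pmod N\}$ and $B$ is its complement, and correspondingly write $P \cong P_A \times P_B$ with $P_A = \prod_{i \in A} \Si^{n_i} E$ and $P_B = \prod_{i \in B} \Si^{n_i} E$. The key observation is that each component $\pi_i s \colon I \to \Si^{n_i} E$ is an element of $E^{n_i} I$, which vanishes for $i \in B$ by the $N$-sparseness of $E^* I$. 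Since a map into a product is detected by its projections, this forces $\pi_B s \colon I \to P_B$ to be zero, so $s$ factors as $\iota_A s_A$ through the canonical split inclusion $\iota_A = (\id, 0) \colon P_A \to P$, where $s_A = \pi_A s$. Setting $r' = r \iota_A$ then gives $r' s_A = \id_I$, exhibiting $I$ as a retract of $P_A = \prod_{i \in A} \Si^{n_i} E$; writing $n_i = m_i N$ for $i \in A$ finishes the claim.

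For part (2), I would apply part (1) to write $I$ as a retract of $\prod_i \Si^{m_i N} E$. Since $\Si$ is an equivalence it commutes with products, so $\Si^t I$ is a retract of $\prod_i \Si^{t + m_i N} E$. Applying the functor $\cat{T}(W, -)$, which carries products to products and retracts to retracts, exhibits $\cat{T}(W, \Si^t I)$ as a retract of $\prod_i \cat{T}(W, \Si^{t + m_i N} E) = \prod_i E^{t + m_i N} W$. When $t \not\equiv 0 \pmod N$, every exponent satisfies $t + m_i N \not\equiv 0 \pmod N$, so each factor vanishes by the $N$-sparseness of $E^* W$; the product is therefore zero, and a retract of the zero group is zero.

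The main obstacle — really the only subtle point — is the product bookkeeping in part (1): one must use that maps into an infinite product are detected componentwise in order to discard the ``bad'' factors $\Si^{n_i} E$ with $n_i \not\equiv 0 \pmod N$ from the retract. Once this is set up, everything follows formally from additivity and the universal property of products, and part (2) is then purely routine.
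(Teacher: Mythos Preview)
Your proof is correct and follows essentially the same approach as the paper: for part~(1), both arguments use $N$-sparseness of $E^*I$ to see that the section $I \to \prod_i \Si^{n_i} E$ has zero components on the factors with $n_i \not\equiv 0 \pmod N$, hence factors through the subproduct indexed by the good factors; part~(2) is then the same routine retract-of-a-product argument in both.
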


\begin{proof}
(1) $I$ is a retract of a product $P = \prod_i \Si^{n_i} E$, with a map $\io \colon I \inj P$ and retraction $\pi \colon P \surj I$. Consider the subproduct $P' = \prod_{N \mid n_i} \Si^{n_i} E$, with inclusion $\io' \colon P' \inj P$ (via the zero map into the missing factors) and projection $\pi' \colon P \surj P'$. Then the equality
\[
\io' \pi' \io = \io \colon I \to P
\]
holds, using the fact that $E^*I$ is $N$-sparse. %
Therefore, we obtain $\pi \io' \pi' \io = \pi \io = 1_I$, so that $I$ is a retract of $P'$. %

(2) By the first part, $\cat{T}(W,\Si^t I)$ is a retract of
\begin{align*}
\cat{T}(W,\Si^t \prod_i \Si^{m_i N} E) &= \cat{T}(W,\prod_i \Si^{m_i N+ t} E) \\
&= \prod_i \cat{T}(W,\Si^{m_i N+ t} E) \\ 
&= \prod_i E^{m_i N+ t}W \\
&= 0 ,
\end{align*}
using the assumption that $E^*W$ is $N$-sparse.
\end{proof}

\begin{lem}\label{le:SparseBracket}
Let $I_0 \ral{f_1} I_1 \ral{f_2} I_2 \to \cdots \ral{f_r} I_r$ be a diagram
in $\cat{T}$, with $r \leq N+1$. Assume that each object $I_j$ is injective and 
that each $E^*(I_j)$ is $N$-sparse. Then the iterated Toda family $\TF(f_r, f_{r-1}, \ldots, f_1)$ is either empty or consists of a single composable pair $\Si^{r-2} I_0 \to C \to I_r$, up to automorphism of $C$.
\end{lem}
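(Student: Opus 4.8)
The plan is to unwind $\TF(f_r,\ldots,f_1)$ through its inductive definition as the $(r-2)$-fold iterate of the Toda family construction $T_0$ (Definitions~\ref{def:TodaFamily} and~\ref{def:HigherToda}), always forming the family of the three topmost maps, and to prove that at every stage the lift $\al$ and the extension $\be$ are uniquely determined once the intermediate cofiber is chosen. Since cofibers are unique up to non-canonical isomorphism, this forces the entire iterated family to collapse to one composable pair $\Si^{r-2}I_0 \to C \to I_r$, with the only remaining freedom being an automorphism of the terminal object $C$; existence may of course fail, accounting for the ``empty'' alternative.

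First I would record the shape of the intermediate objects. The $m$-th application of $T_0$ forms the cofiber $C_m$ of the map $\Si^{m-1}I_{r-1-m}\to C_{m-1}$ (with $C_0$ read as $I_{r-1}$), and an easy induction on the defining cofiber triangles shows that $C_m$ carries a filtration whose subquotients are $\Si^j I_{r-1-j}$ for $0\le j\le m$. In particular the terminal object $C=C_{r-2}$ is the $(r-1)$-filtered object with subquotients $\Si^j I_{r-1-j}$, $0\le j\le r-2$, which, because $r-2\le N-1$, sit in \emph{distinct} residues modulo $N$.

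The heart of the argument is the torsor description of lifts and extensions. In any instance $T(g,f,e)$ of the fiber--cofiber construction with $f\colon Z\to Y$ and $g\colon Y\to V$, the ambiguity in the lift $\al$ is a quotient of $\cat{T}(W,\Si^{-1}Y)$ and the ambiguity in the extension $\be$ is a quotient of $\cat{T}(\Si Z,V)$, while the \emph{existence} obstructions are the composites $fe$ and $gf$. At the $m$-th stage all of $W$, $Z$, $V$ and (after d\'evissage through the filtration of the intermediate cofiber) the relevant terms are suspensions of the $I_j$, so every ambiguity group is a sum of groups $\cat{T}(\Si^a I_i,\Si^b I_j)\cong\cat{T}(I_i,\Si^{b-a}I_j)$, which vanishes unless $b-a\equiv 0\pmod N$ by Lemma~\ref{le:SparseInj}(2). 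I would then check that for the ambiguity groups the exponent difference $b-a$ always lies strictly between $-N$ and $0$: for the lift it runs over $\{\,j-m : 0\le j\le m-1\,\}$ and for the extension it equals $-m$, and the bound $m\le r-2\le N-1$ keeps all of these away from multiples of $N$. Hence every lift and every extension is unique. By contrast the existence obstructions $fe$ and $gf$ live in degree $\equiv 0 \pmod N$ and may genuinely be nonzero, which is precisely why the conclusion is ``empty or a single pair'' rather than ``always a single pair.''

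Finally I would assemble these stagewise statements: fixing a representative of each cofiber determines $\al$ and $\be$ at that stage, so any two runs of the construction differ only by the isomorphisms relating their cofibers, and these compose to an isomorphism $C\cong C'$ carrying one pair to the other. Thus $\TF(f_r,\ldots,f_1)$ is a single orbit under $\mathrm{Aut}(C)$, as claimed. I expect the main obstacle to be the bookkeeping of suspension degrees and residues modulo $N$ at each stage: one must verify that the constraint $r\le N+1$ is exactly what prevents an ambiguity degree from wrapping around to $-N$ (which would reintroduce indeterminacy in the lift $\al$ at the final stage), while everything else reduces to the elementary torsor description of lifts and extensions together with routine d\'evissage using Lemma~\ref{le:SparseInj}(2).
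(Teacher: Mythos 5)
Your proposal is correct and follows essentially the same route as the paper's proof: both induct through the $r-2$ applications of the Toda family construction from the left, showing at each stage that the lift and the extension are unique because the relevant indeterminacy groups $\cat{T}(\Si^{j+1} I_{r-3-j}, C_j)$ and $\cat{T}(\Si^{j+1} I_{r-2-j}, I_r)$ vanish by sparseness, with the bound $r \le N+1$ entering exactly where you say. The only cosmetic difference is that you track the vanishing by explicit d\'evissage through the filtration of $C_j$ with degree bookkeeping, whereas the paper packages the same computation as an inductively maintained vanishing property of $C_j$.
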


\begin{proof}
In the case $r=2$, there is nothing to prove, so we may assume $r \geq 3$. The iterated Toda family is obtained by $r-2$ iterations of the $3$-fold Toda family construction. The first iteration computes the Toda family of the diagram
\[
\xymatrix{
I_{r-3} \ar[r]^-{f_{r-2}} & I_{r-2} \ar[r]^-{f_{r-1}}  & I_{r-1} \ar[r]^-{f_{r}} & I_{r}. \\
}
\]
Choose a cofiber of $f_{r-1}$, i.e., a distinguished triangle $I_{r-2} \ral{f_{r-1}} I_{r-1} \to C_1 \to \Si I_{r-2}$.
A lift of $f_{r-2}$ to the fiber $\Si^{-1} C_1$, if it exists, is determined up to
\[
\cat{T}(I_{r-3}, \Si^{-1} I_{r-1}) = \cat{T}(\Si I_{r-3}, I_{r-1}),
\]
which is zero by Lemma~\ref{le:SparseInj}(2).
Likewise, an extension of $f_{r}$ to the cofiber $C_1$, if it exists, is determined up to
\[
\cat{T}(\Si I_{r-2}, I_{r}) = 0.
\]
Hence, $\TF(f_r, f_{r-1}, f_{r-2})$ is either empty or consists of a single pair $(\be_1, \Si \al_1)$,
up to automorphisms of $C_1$.
It is easy to see that the object $C_1$ has the following property:
\begin{equation}\label{eq:SparseCohom1}
\text{If $E^*W = 0$ for $\ast \equiv 0,1 \;(\bmod\; N)$, then $\cat{T}(W,C_1) = 0$.}
\end{equation}
For $r \geq 4$, the next iteration computes the Toda family of the diagram
\[
\xymatrix{
\Si I_{r-4} \ar[r]^-{\Si f_{r-3}} & \Si I_{r-3} \ar[r]^-{\Si \al_1} & C_1 \ar[r]^-{\be_1} & I_{r}. \\
}
\]
The respective indeterminacies are
\[
\cat{T}(\Si^2 I_{r-4}, C_1),
\]
which is zero by Property~\eqref{eq:SparseCohom1}, and 
\[
\cat{T}(\Si^2 I_{r-3}, I_{r}),
\]
which is zero by Lemma~\ref{le:SparseInj}(2), since $N \geq 3$ in this case.
Hence, $\TF(\be_1, \Si \al_1, \Si f_{r-3})$ is either empty or consists of a single pair $(\be_2, \Si \al_2)$,
up to automorphism of the cofiber $C_2$ of $\Sigma \alpha_1$.
Repeating the argument inductively, the successive iterations compute the Toda family of a diagram
\[
\xymatrix @C=\bigcol {
\Si^j I_{r-3-j} \ar[r]^-{\Si^j f_{r-2-j}} & \Si^j I_{r-2-j} \ar[r]^-{\Si \al_j} & C_j \ar[r]^-{\be_j} & I_{r} \\
}
\]
for $0 \leq j \leq r-3$, where $C_j$ has the following property:
\begin{equation}\label{eq:SparseCohomj}
\text{If $E^*W = 0$ for $\ast \equiv 0,1,\ldots,j \;(\bmod\; N)$, then $\cat{T}(W,C_j) = 0$.}
\end{equation}
The indeterminacy groups $\cat{T}(\Si^{j+1} I_{r-3-j}, C_j)$ and $\cat{T}(\Si^{j+1} I_{r-2-j}, I_{r})$
are again zero.
Hence,\break $\TF(\be_j, \Si \al_j, \Si^j f_{r-2-j})$ is either empty or consists of a single pair 
$(\be_{j+1}, \Si \al_{j+1})$, up to automorphism of $C_{j+1}$.
Note that the argument works until the last iteration $j = r-3$, by the assumption $r-2 < N$.
\end{proof}

We will need the following condition on an object $Y$:
\begin{condition}\label{co:InjSparse}
$Y$ admits an $\cat{I}_E$-Adams resolution $Y_{\bu}$ (see \eqref{eq:AdamsResolInj}) such that for each injective $I_j$ in the resolution,
$E^* (\Si^j I_j)$ is $N$-sparse.
\end{condition}

\pagebreak[2]
\begin{rem}\leavevmode
\begin{enumerate}
\item Condition~\ref{co:InjSparse} implies that $E^* Y$ is itself $N$-sparse, because of the surjection $E^* I_0 \surj E^* Y$.
\item The condition can be generalized to: there is an integer $m$ such that for each $j$, $E^* (\Si^j I_j)$ is concentrated in degrees $\ast \equiv m \pmod{N}$. We take $m=0$ for notational convenience.
\item We will see in Propositions~\ref{pr:CohomProduct} and~\ref{pr:CoherentRing} situations in which
Condition~\ref{co:InjSparse} holds.
\end{enumerate}
\end{rem}

\begin{thm}\label{th:SparseDr}
Let $X$ and $Y$ be objects in $\cat{T}$ and consider the Adams spectral sequence abutting to $\cat{T}(X,Y)$ with respect to the injective class $\cat{I}_E$. Assume that $Y$ satisfies Condition~\ref{co:InjSparse}. Then for all $r \leq N$, the Adams differential is given, as a subset of $E_1^{s+r,t+r-1}$, by
\[
d_r [x] = \lan \Si^{r-1} d_1 , \ldots, \Si d_1, d_1 , x \ran.
\]
In other words, the restricted bracket appearing in Theorem~\ref{th:AdamsDrCohomOp} coincides with the full Toda bracket.
\end{thm}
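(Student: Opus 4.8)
The plan is to prove the two inclusions separately. By Theorem~\ref{th:AdamsDrCohomOp}, $d_r[x]$ equals the restricted Toda bracket $\lan \Si^{r-1} d_1 \fix \ldots \fix \Si d_1 \fix d_1, x \ran$, and since a restricted Toda bracket is by construction a subset of the corresponding unrestricted one, the inclusion $d_r[x] \subseteq \lan \Si^{r-1} d_1, \ldots, \Si d_1, d_1, x \ran$ is immediate. All the work lies in the reverse inclusion, for which I would show that sparseness forces every intermediate choice in the unrestricted bracket to coincide, up to an inconsequential automorphism, with the choices made in the restricted bracket.

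To this end, I would compare the two brackets stage by stage, computing the standard bracket from the left as in Definition~\ref{def:HigherToda} --- which is exactly the order in which the restricted bracket is built in the proof of Theorem~\ref{th:AdamsDrCohomOp}. The first $r-2$ stages involve only the suspended primary operations $\Si^j d_1$ together with the lifts $\al_k$ and extensions $\be_k$ they generate; these are all maps between injective objects (and their cofibers) whose $E$-cohomology is $N$-sparse, by Condition~\ref{co:InjSparse}. Applying Lemma~\ref{le:SparseBracket} to the length-$r$ sequence $\Si^{r-1} d_1, \ldots, d_1$ --- legitimate since $r \le N$ --- each internal Toda family is either empty or a single composable pair, up to automorphism of the intermediate object. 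Concretely, the indeterminacy groups $\cat{T}(\Si^{j+1} I_\bullet, C_j)$ and $\cat{T}(\Si^{j+1} I_\bullet, I_r)$ vanish exactly as in the proof of Lemma~\ref{le:SparseBracket} (via Lemma~\ref{le:SparseInj}), so at each stage the lift and the extension are uniquely determined; since an automorphism of an intermediate object leaves the resulting composite unchanged, the internal choices in the unrestricted computation agree with the octahedral maps $\al_k, \be_k$ built in the proof of Theorem~\ref{th:AdamsDrCohomOp}.

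It remains to incorporate the map $x$ at the final stage. Here $X$ is arbitrary, so the closing Toda family genuinely ranges over all lifts of $-\Si^{r-2} x$ through the map $\io_{r-1}$ appearing in Diagram~\eqref{eq:Wlift}; but this is precisely the freedom already encoded in the restricted bracket, where it corresponds to the choice of lift $\tild{x}$ defining the Adams differential. Matching the two final diagrams identifies the unrestricted bracket with the restricted one and yields the desired equality. The main obstacle I anticipate is bookkeeping: one must identify the generic intermediate objects $C_j$ of Lemma~\ref{le:SparseBracket} with the specific objects $W_k$ assembled via octahedra in the proof of Theorem~\ref{th:AdamsDrCohomOp}, track the various suspensions and signs, and check that the ``up to automorphism'' ambiguity is absorbed harmlessly at every stage. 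The hypothesis $r \le N$ is what guarantees that all of the internal indeterminacies --- including the one at the stage just before $x$ enters --- actually vanish, leaving the lift of $x$ as the sole, and identical, source of indeterminacy in both brackets.
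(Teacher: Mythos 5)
Your proposal is correct and follows essentially the same route as the paper: the forward inclusion is formal, the internal Toda families of the $d_1$'s are pinned down by Lemma~\ref{le:SparseBracket} (and must agree with the restricted ones since those are non-empty), and the only remaining point is uniqueness of the extension over the cofiber at the final stage where $x$ enters. The paper makes that last point explicit by computing $\cat{T}(\Si^{r-1} I_s, \Si^r I_{s+r}) = 0$ via Lemma~\ref{le:SparseInj} using $r-1 < N$; that is the one computation you should spell out rather than folding it into ``internal indeterminacies.''
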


\begin{proof}
We will show that 
\[
  \lan \Si^{r-1} d_1 \fix \ldots \fix \Si d_1 \fix d_1 , x \ran = \lan \Si^{r-1} d_1 , \ldots, \Si d_1, d_1 , x \ran.
\]
Consider the diagram 
\[
\xymatrix{
I_s \ar[r]^-{d_1} & \Si I_{s+1} \ar[r]^-{\Si d_1} & \Si^2 I_{s+2} \ar[r] & \cdots \ar[r] & \Si^{r-1} I_{r-1} \ar[r]^-{\Si^{r-1} d_1} & \Si^{r} I_{s+r} \\
X \ar[u]^{x} & & & & & \\
}
\]
whose Toda bracket is being computed. The corresponding Toda family is
\[
\TF(\Si^{r-1} d_1 , \ldots, \Si d_1, d_1 , x) = \TF \left( \TF(\Si^{r-1} d_1 , \ldots, \Si d_1, d_1), \Si^{r-2} x \right).
\]
We know that
\[
  \TF(\Si^{r-1} d_1 \fix \ldots \fix \Si d_1 \fix d_1) \subseteq \TF(\Si^{r-1} d_1 , \ldots, \Si d_1, d_1).
\]
By Lemma~\ref{le:SparseBracket}, %
the Toda family on the right has at most one element, up to automorphism.
But fully-restricted Toda families are always non-empty, so the inclusion must be an equality.
Write $\Si^{r-2} I_{s} \ral{f} C \ral{g} \Si^r I_{s+r}$ for an element of these families.
It remains to show that the inclusion
\[
  \lan g \fix f, \Si^{r-2} x \ran \subseteq \lan g, f, \Si^{r-2} x \ran 
\]
is an equality, i.e., that the extension of $g$ to the cofiber of $f$ is unique.
This follows from the equality $\cat{T}(\Si^{r-1} I_{s}, \Si^r I_{s+r}) = 0$, which uses the assumption on the injective objects $I_j$ and that $r-1 < N$.
\end{proof}

Next, we describe situations in which Theorem~\ref{th:SparseDr} applies.

\begin{prop}\label{pr:CohomProduct}
Assume that every product of the form $\prod_i \Si^{m_i N} E$ has cohomology\break $E^* \left( \prod_i \Si^{m_i N} E \right)$ which is $N$-sparse. Then every object $Y$ such that $E^* Y$ is $N$-sparse also satisfies Condition~\ref{co:InjSparse}.
\end{prop}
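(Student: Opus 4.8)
The plan is to build an explicit Adams resolution of $Y$ and track the degrees in which the $E$-cohomology of each term is supported. The key translation to keep in mind is that, for any object $Z$, the group $E^*(\Si^j Z)$ is $N$-sparse precisely when $E^* Z$ is concentrated in degrees $\equiv -j \pmod N$, since $E^m(\Si^j Z) = E^{m-j} Z$. I would construct the resolution using the canonical cofree injectives: for each $s$ set
\[
I_s := \prod_{n \in \Z} \prod_{g \in E^n Y_s} \Si^n E ,
\]
which exists since $\cat{T}$ has infinite products and is injective (a product of the injectives $\Si^n E$), with $p_s \colon Y_s \to I_s$ the map whose $(n,g)$-component is $g \colon Y_s \to \Si^n E$. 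Evaluating against each $\Si^m E$ shows $p_s$ is $\cat{I}_E$-monic, so taking $Y_{s+1}$ to be its fiber produces a distinguished triangle $\Si^{-1} I_s \to Y_{s+1} \ral{i_s} Y_s \ral{p_s} I_s$ as in Diagram~\eqref{eq:AdamsResolInj}; iterating gives an Adams resolution of $Y = Y_0$.

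I would then prove by induction on $s$ that $E^* Y_s$ is concentrated in degrees $\equiv -s \pmod N$. The base case $s=0$ is the hypothesis that $E^* Y$ is $N$-sparse. Assuming the claim for $Y_s$, the cofree object $I_s$ has only factors $\Si^n E$ with $n \equiv -s \pmod N$, so $\Si^s I_s = \prod_i \Si^{m_i N} E$ is a product of exactly the kind controlled by the hypothesis; hence $E^*(\Si^s I_s)$ is $N$-sparse — this is precisely Condition~\ref{co:InjSparse} for $I_s$ — and equivalently $E^* I_s$ is concentrated in degrees $\equiv -s \pmod N$. Now apply $\cat{T}(-,\Si^* E)$ to the rotated triangle $Y_{s+1} \ral{i_s} Y_s \ral{p_s} I_s \ral{\de_s} \Si Y_{s+1}$. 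Because $i_s \in \cat{N}$ we have $i_s^* = 0$, and because $p_s$ is $\cat{I}_E$-monic the map $p_s^*$ is surjective, so the long exact sequence collapses to short exact sequences
\[
0 \to E^n Y_{s+1} \to E^{n+1} I_s \ral{p_s^*} E^{n+1} Y_s \to 0 .
\]
In particular $E^n Y_{s+1}$ embeds in $E^{n+1} I_s$, which vanishes unless $n+1 \equiv -s \pmod N$; thus $E^* Y_{s+1}$ is concentrated in degrees $\equiv -(s+1) \pmod N$, completing the induction. Since the argument produces the requisite $\cat{I}_E$-Adams resolution with each $E^*(\Si^j I_j)$ being $N$-sparse, $Y$ satisfies Condition~\ref{co:InjSparse}.

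The one genuinely substantive point — and the only place the hypothesis is used — is the passage from ``$\Si^s I_s$ is a product $\prod_i \Si^{m_i N} E$'' to ``$E^*(\Si^s I_s)$ is $N$-sparse''. This is \emph{not} automatic: $E$-cohomology need not commute with infinite products, so a product of objects with $N$-sparse cohomology can perfectly well fail to have $N$-sparse cohomology, and it is exactly this potential failure that the hypothesis rules out. Everything else is bookkeeping with the long exact sequence together with the two defining features of an Adams resolution, namely that each $i_s$ lies in $\cat{N}$ and each $p_s$ is $\cat{I}_E$-monic. I expect the only point requiring care in the write-up is being explicit that the cofree $I_s$ picks up factors only in the degrees where $E^* Y_s$ is supported, so that the twist $\Si^s I_s$ genuinely lands in the class of products covered by the hypothesis.
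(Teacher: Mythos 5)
Your proof is correct and follows essentially the same route as the paper's: build the resolution one stage at a time, use the hypothesis on products $\prod_i \Si^{m_i N} E$ to conclude that $E^*(\Si^s I_s)$ is $N$-sparse, and use the long exact sequence (with $i_s^* = 0$ and $p_s^*$ surjective) to propagate the sparseness of $E^*(\Si^s Y_s)$ to the next stage. The only difference is cosmetic: the paper indexes $I_s$ by a chosen $E^*E$-module generating set of $E^*Y_s$ rather than by all elements, which changes nothing in the argument.
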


\begin{proof}
Let $(y_i)$ be a set of non-zero generators of $E^* Y$ as an $E^* E$-module. %
Then the corresponding map $Y \to \prod_i \Si^{\abs{y_i}} E$ is $\cat{I}_E$-monic into an injective object;
we take this map as the first step $p_0 \colon Y_0 \to I_0$, with cofiber $\Si Y_1$. By our assumption on $Y$, each degree $\abs{y_i}$ is a multiple of $N$, and thus $E^* I_0$ is $N$-sparse, by the assumption on $E$. The distinguished triangle $Y_1 \to Y_0 \ral{p_0} I_0 \to \Si Y_1$ induces a long exact sequence on $E$-cohomology which implies that the map $I_0 \to \Si Y_1$ is injective on $E$-cohomology.  It follows that $E^*(\Si Y_1)$ is $N$-sparse as well. Repeating this process, we obtain an $\cat{I}_E$-Adams resolution of $Y$ such that for every $j$, $E^* (\Si^j Y_j)$ and $E^* (\Si^j I_j)$ are $N$-sparse.
\end{proof}

The condition on $E$ is discussed in Example \ref{ex:Compact}.

\begin{prop}\label{pr:CoherentRing}
Assume that the ring $E^* E$ is left coherent, and that $E^* Y$ is $N$-sparse and finitely presented as a left $E^*E$-module. Then $Y$ satisfies Condition~\ref{co:InjSparse}.
\end{prop}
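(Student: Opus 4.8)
The plan is to construct an $\cat{I}_E$-Adams resolution of $Y$ by the same inductive procedure as in the proof of Proposition~\ref{pr:CohomProduct}, but to carry along the stronger inductive hypothesis that $E^*(\Si^s Y_s)$ is \emph{finitely presented} as a graded left $E^*E$-module, in addition to being $N$-sparse. The base case $s = 0$ is precisely the hypothesis on $Y$. The reason for tracking finite presentation is that it guarantees each injective $I_s$ can be taken to be a \emph{finite} product of suspensions of $E$, and for a finite product the $N$-sparseness of $E^*(\Si^s I_s)$ is automatic, so that the extra hypothesis on $E$ used in Proposition~\ref{pr:CohomProduct} is no longer needed.

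For the inductive step, suppose $E^*(\Si^s Y_s)$ is $N$-sparse and finitely presented. First I would choose finitely many homogeneous generators $y_i \in E^{|y_i|}(Y_s)$ of $E^*(Y_s)$; the $N$-sparseness forces $|y_i| \equiv -s \pmod N$. These assemble into a map $p_s \colon Y_s \to I_s := \prod_i \Si^{|y_i|} E$, which is $\cat{I}_E$-monic into an injective object and which I take as the $s$-th stage of the resolution in Diagram~\eqref{eq:AdamsResolInj}, with $Y_{s+1}$ its fiber. Because the product is finite, $E^*$ converts $\Si^s I_s$ into the finitely generated free module $\bigoplus_i \Si^{s + |y_i|}(E^*E)$; since each $s + |y_i|$ is a multiple of $N$ and $E^*E$ is $N$-sparse, $E^*(\Si^s I_s)$ is $N$-sparse. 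As the generators $y_i$ lie in the image of $p_s^*$ and generate, $p_s^*$ is surjective, so the long exact sequence in $E$-cohomology attached to the triangle $Y_{s+1} \to Y_s \ral{p_s} I_s \to \Si Y_{s+1}$ collapses, after suspending by $\Si^s$, to a short exact sequence of $E^*E$-modules
\[
0 \to E^*(\Si^{s+1} Y_{s+1}) \to E^*(\Si^s I_s) \to E^*(\Si^s Y_s) \to 0 .
\]
Being a submodule of the $N$-sparse module $E^*(\Si^s I_s)$, the module $E^*(\Si^{s+1} Y_{s+1})$ is again $N$-sparse, which settles the sparseness half of the induction.

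The crux is the finite-presentation half, and this is exactly where left coherence of $E^*E$ is used. Coherence is equivalent to the finitely presented graded left $E^*E$-modules forming an abelian subcategory of all graded modules; in particular the kernel of any map between finitely presented modules is finitely presented. Applying this to the surjection $E^*(\Si^s I_s) \twoheadrightarrow E^*(\Si^s Y_s)$ from a finitely generated free module onto a finitely presented one, the kernel $E^*(\Si^{s+1} Y_{s+1})$ is finitely presented, closing the induction and producing an $\cat{I}_E$-Adams resolution with $E^*(\Si^s I_s)$ being $N$-sparse for all $s$, i.e., one satisfying Condition~\ref{co:InjSparse}. The points that need genuine care, rather than formal bookkeeping, are this homological input on coherent rings together with its compatibility with the grading, and the verifications that $p_s$ is $\cat{I}_E$-monic and that $p_s^*$ is surjective; the remaining steps parallel Proposition~\ref{pr:CohomProduct} verbatim.
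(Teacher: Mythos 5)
Your proof is correct and takes essentially the same approach as the paper: the paper also runs the induction by choosing a finite set of generators at each stage (so each $I_s$ is a finite product of suspensions of $E$, making $E^*(\Si^s I_s)$ automatically $N$-sparse), and invokes coherence of $E^*E$ to conclude that the kernel $E^*(\Si^{s+1}Y_{s+1})$ of the surjection from the finitely generated free module onto the finitely presented module is again finitely presented. The paper merely compresses the inductive step into ``repeating this process,'' whereas you spell it out.
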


\begin{proof}
Since $E^*Y$ is finitely generated over $E^*E$, the map $p_0 \colon Y \to I_0$ can be chosen so that $I_0 = \prod_i \Si^{m_i N} E \cong \op_i \Si^{m_i N} E$ is a finite product. 
It follows that $E^* I_0$ is $N$-sparse and finitely presented.
We have that $E^{*-1}Y_1 = \ker \left( p_0^* \colon E^*I_0 \surj E^*Y \right)$.
This is $N$-sparse, since $E^* I_0$ is, and is finitely presented over $E^*E$, since both $E^* I_0$ and $E^*Y$ are, and $E^*E$ is coherent \cite{Bourbaki98}*{\S I.2, Exercises 11--12}. %
Repeating this process, we obtain an $\cat{I}_E$-Adams resolution of $Y$ such that for every $j$, $\Si^j I_j$ is a finite product of the form $\prod_i \Si^{m_i N} E$.
\end{proof}

\subsection{Projective case}\label{ss:sparse-projective}

The main applications of Theorem~\ref{th:SparseDr} are to projective classes instead of injective classes. For future reference, we state here the dual statements of the previous subsection and adopt a notation inspired from stable homotopy theory.

\begin{nota}
Let $R$ be an object of the triangulated category $\cat{T}$. Define the \Def{homotopy} (with respect to $R$) of an object $X$ as the graded abelian group $\pi_* X$ given by $\pi_n X := \cat{T}(\Si^n R,X)$. Precomposition makes $\pi_*X$ into a right module over the graded endomorphism ring $\pi_* R$.%
\end{nota}

\begin{assum}
For the remainder of this subsection, we assume the following.
\begin{enumerate}
\item The triangulated category $\cat{T}$ has infinite %
coproducts.
\item The graded ring $\pi_* R$ is $N$-sparse for some $N \geq 2$.
\end{enumerate}
\end{assum}

Let $\cat{P}_R$ denote the stable projective class spanned by $R$, as in Example~\ref{ex:GhostSphere}. Explicitly, $\cat{P}_R$ consists of retracts of (arbitrary) coproducts $\oplus_i \Si^{n_i} R$.

\begin{condition}\label{co:ProjSparse}
$X$ admits a $\cat{P}_R$-Adams resolution $X_{\bu}$ as in Diagram~\eqref{eq:AdamsResolProj} with the property that $\pi_* (\Si^{-j} P_j)$ is $N$-sparse for each projective $P_j$.
\end{condition}

\begin{thm}\label{th:SparseDrProj}
Let $X$ and $Y$ be objects in $\cat{T}$ and consider the Adams spectral sequence abutting to $\cat{T}(X,Y)$ with respect to the projective class $\cat{P}_R$. Assume that $X$ satisfies Condition~\ref{co:ProjSparse}. Let $[y] \in E_r^{s,t}$ be a class represented by $y \in E_1^{s,t} = \cat{T}(\Si^{t-s} P_s, Y)$. Then for all $r \leq N$, the Adams differential is given, as a subset of $E_1^{s+r,t+r-1}$, by
\[
d_r [y] = \lan y, d_1, \Si^{-1} d_1, \ldots, \Si^{-(r-1)} d_1 \ran.
\]
\end{thm}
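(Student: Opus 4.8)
The plan is to deduce Theorem~\ref{th:SparseDrProj} from Theorem~\ref{th:SparseDr} by passing to the opposite category, which is the sense in which Subsection~\ref{ss:sparse-projective} is dual to Subsection~\ref{ss:sparse-injective}. First I would record the dictionary. By definition the projective class $\cat{P}_R$ in $\cT$ is an injective class $\cat{I}_R$ in $\cTo$, generated by the object $R$; the suspension in $\cTo$ is $\Si^{-1}$, and $R$-cohomology computed in $\cTo$ is
\[
R^n Z = \cTo(Z, \Si_{\opp}^n R) = \cT(\Si^{-n} R, Z) = \pi_{-n} Z .
\]
Hence $R^* R$ is $N$-sparse in $\cTo$ precisely because $\pi_* R$ is $N$-sparse, and an $\cat{P}_R$-Adams resolution of $X$ in $\cT$ is the same data as an $\cat{I}_R$-Adams resolution of $X$ in $\cTo$. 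Under this identification $R^*(\Si_{\opp}^j P_j) = \pi_{-*}(\Si^{-j} P_j)$, so Condition~\ref{co:ProjSparse} on $X$ in $\cT$ is exactly Condition~\ref{co:InjSparse} on $X$ in $\cTo$.

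Next I would check that the two Adams spectral sequences coincide. Both arise by applying the relevant Hom-functor to the single Adams resolution of $X$: in $\cT$ one applies $\cT(-,Y)$, while in $\cTo$ one applies $\cTo(Y,-) = \cT(-,Y)$. These produce the same exact couple, and hence the same pages and the same differentials $d_r$. The identification of $E_1$-terms is the suspension bookkeeping of Convention~\ref{co:SuspensionIso}: the class $y \in \cT(\Si^{t-s} P_s, Y)$ is the same as a class in $\cTo(\Si_{\opp}^{t-s} Y, P_s)$. In particular the primary operations $d_1$ and the degree-shifting maps $\de_s$ of the projective resolution correspond to those of the injective resolution read in $\cTo$.

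With the dictionary in place, I would apply Theorem~\ref{th:SparseDr} in $\cTo$. Since $X$ satisfies Condition~\ref{co:InjSparse} there and $r \leq N$, that theorem yields, as subsets,
\[
d_r[y] = \lan \Si_{\opp}^{r-1} d_1, \ldots, \Si_{\opp} d_1, d_1, y \ran^{\cTo} = \lan \Si^{-(r-1)} d_1, \ldots, \Si^{-1} d_1, d_1, y \ran^{\cTo},
\]
using $\Si_{\opp} = \Si^{-1}$. I would then invoke Corollary~\ref{co:SelfDual}, applied to the composable sequence of $r+1$ maps with leftmost entry $\Si^{-(r-1)} d_1$ and rightmost entry $y$: it identifies this $\cTo$-bracket, under the suspension bijection, with the reversed bracket computed in $\cT$,
\[
\lan y, d_1, \Si^{-1} d_1, \ldots, \Si^{-(r-1)} d_1 \ran ,
\]
which is the claimed expression. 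Because the two signs in the proof of Corollary~\ref{co:SelfDual} cancel, the identification is clean and no residual sign survives; and since $d_r[y]$ computed in $\cTo$ is literally $d_r[y]$ in the projective Adams spectral sequence, the theorem follows.

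I expect the only genuine obstacle to be the bookkeeping rather than any new idea: one must confirm that the suspension conventions match on the two sides of Corollary~\ref{co:SelfDual}, that the target $E_1^{s+r,t+r-1}$ of the differential is the same group under the duality, and that the $\Si^{-j}$ twist in Condition~\ref{co:ProjSparse} lines up with the $\Si_{\opp}^j$ twist of Condition~\ref{co:InjSparse}. No part of the injective argument needs to be reproved: Lemmas~\ref{le:SparseInj} and~\ref{le:SparseBracket} and Theorem~\ref{th:SparseDr} are all stated intrinsically for a triangulated category equipped with an injective class, so they apply verbatim in $\cTo$.
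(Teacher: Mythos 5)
Your proposal is correct and is exactly the argument the paper intends: the paper states Theorem~\ref{th:SparseDrProj} without proof as the formal dual of Theorem~\ref{th:SparseDr}, remarking only that Corollary~\ref{co:SelfDual} is used to reverse the bracket and cancel the signs. You have simply written out the dualization dictionary that the paper leaves implicit.
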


Note that we used Corollary~\ref{co:SelfDual} to ensure that the equality holds as stated, not merely up to sign.

\begin{prop}\label{pr:HomotCoproduct}
Assume that every coproduct of the form $\oplus_i \Si^{m_i N} R$ has homotopy\break $\pi_* \left( \oplus_i \Si^{m_i N} R \right)$ which is $N$-sparse. Then every object $X$ such that $\pi_* X$ is $N$-sparse also satisfies Condition~\ref{co:ProjSparse}.
\end{prop}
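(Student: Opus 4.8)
The plan is to dualize the proof of Proposition~\ref{pr:CohomProduct} essentially verbatim, building a $\cat{P}_R$-Adams resolution of $X$ one stage at a time and using the homotopy long exact sequence to propagate $N$-sparseness. First I would choose a set $(x_i)$ of non-zero homogeneous generators of $\pi_* X$ as a right $\pi_* R$-module; the induced map $\op_i \Si^{\abs{x_i}} R \to X$ is $\cat{P}_R$-epic out of a projective object (it is surjective on $\pi_*$ because the $x_i$ generate, and $\cat{P}_R$ consists of retracts of such coproducts), and I would take it as the first step $p_0 \colon P_0 \to X_0 = X$, with $X_1$ the cofiber, so that the triangle $P_0 \ral{p_0} X_0 \ral{i_0} X_1 \ral{\de_0} \Si P_0$ is of the form appearing in Diagram~\eqref{eq:AdamsResolProj}. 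Since $\pi_* X$ is $N$-sparse, each $\abs{x_i}$ is a multiple of $N$, so $P_0 \cong \op_i \Si^{m_i N} R$ and hence $\pi_* P_0$ is $N$-sparse by the hypothesis on $R$.

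The propagation step is the dual of the cohomology computation in Proposition~\ref{pr:CohomProduct}. Applying $\pi_* = \cat{T}(\Si^* R, -)$ to the triangle $P_0 \to X_0 \to X_1 \to \Si P_0$ gives an exact sequence $\pi_n P_0 \ral{(p_0)_*} \pi_n X_0 \to \pi_n X_1 \ral{(\de_0)_*} \pi_{n-1} P_0$; because $p_0$ is $\cat{P}_R$-epic and each $\Si^n R$ lies in $\cat{P}_R$, the map $(p_0)_*$ is surjective, so $(\de_0)_*$ is injective and $\pi_n X_1 \hookrightarrow \pi_{n-1} P_0$. As $\pi_* P_0$ is concentrated in degrees $\equiv 0 \pmod N$, this forces $\pi_n X_1 = 0$ unless $n \equiv 1 \pmod N$, i.e.\ $\pi_*(\Si^{-1} X_1)$ is $N$-sparse. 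I would then iterate: assuming $\pi_*(\Si^{-j} X_j)$ is $N$-sparse, the homogeneous generators of $\pi_* X_j$ sit in degrees $\equiv j \pmod N$, so the next projective may be taken of the form $P_j = \op_i \Si^{n_i} R$ with $n_i \equiv j \pmod N$; then $\Si^{-j} P_j \cong \op_i \Si^{n_i - j} R$ is a coproduct of the required form $\op_i \Si^{m_i N} R$, whence $\pi_*(\Si^{-j} P_j)$ is $N$-sparse, and the same long-exact-sequence argument shows $\pi_*(\Si^{-(j+1)} X_{j+1})$ is $N$-sparse. This closes the induction and produces a resolution witnessing Condition~\ref{co:ProjSparse}.

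The only points requiring care, rather than genuine obstacles, are bookkeeping ones: getting the direction of the connecting map right (here $X_{j+1}$ is the \emph{cofiber} of $p_j$, so the homotopy long exact sequence yields an injection $\pi_n X_{j+1} \hookrightarrow \pi_{n-1} P_j$, dual to the cohomology injection in Proposition~\ref{pr:CohomProduct}), and tracking that the residue class of the nonvanishing degrees increases by one at each stage. It is precisely this shift that makes the desuspensions $\Si^{-j}$ land everything back in degrees $\equiv 0 \pmod N$, so that the hypothesis on $R$ can be reapplied at every step. No indeterminacy or triangulated subtlety beyond the long exact sequence is involved.
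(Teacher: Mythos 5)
Your argument is correct and is exactly the proof the paper intends: Proposition~\ref{pr:HomotCoproduct} is stated as the formal dual of Proposition~\ref{pr:CohomProduct}, and your construction (a $\cat{P}_R$-epic $p_j \colon \op_i \Si^{n_i} R \to X_j$ from module generators, followed by the long exact sequence in $\pi_*$ giving the injection $\pi_n X_{j+1} \hookrightarrow \pi_{n-1} P_j$ and the degree shift by one at each stage) mirrors that proof line by line. The bookkeeping of residue classes and desuspensions is handled correctly, so nothing further is needed.
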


Recall the following terminology:

\begin{defn}
An object $X$ of $\cat{T}$ is \Def{compact} if the functor $\cat{T}(X,-)$ preserves infinite coproducts.%
\end{defn}

\begin{ex}\label{ex:Compact}
If $R$ is compact in $\cat{T}$, then $R$ satisfies the assumption of Proposition~\ref{pr:HomotCoproduct}. This follows from the isomorphism
\[
\pi_* \left( \oplus_i \Si^{m_i N} R \right) \cong \bigoplus_i \pi_* (\Si^{m_i N} R) = \bigoplus_i \Si^{m_i N} \pi_* R  
\]
and the assumption that $\pi_* R$ is $N$-sparse. The same argument works if $R$ is a retract of a coproduct of compact objects.

Dually, if $E$ is cocompact in $\cat{T}$, then $E$ satisfies the assumption of Proposition~\ref{pr:CohomProduct}. 
This holds more generally if $E$ is a retract of a product of cocompact objects.
\end{ex}

\begin{rem}
Some of the related literature deals with compactly generated triangulated categories. As noted in Remark~\ref{re:NotGenerate}, we do \emph{not} assume that the object $R$ is a generator, i.e., that the condition $\pi_* X = 0$ implies $X=0$.
\end{rem}

\begin{prop}\label{pr:CoherentRingProj}
Assume that the ring $\pi_* R$ is right coherent, and that $\pi_* X$ is $N$-sparse and finitely presented as a right $\pi_* R$-module. Then $X$ satisfies Condition~\ref{co:ProjSparse}.
\end{prop}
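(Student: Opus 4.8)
The plan is to run the dual of the argument proving Proposition~\ref{pr:CoherentRing}, systematically replacing the products, the cohomology $E^*(-) = \cat{T}(-,\Si^* E)$, and the $\cat{I}_E$-monic maps used there by coproducts, the homotopy $\pi_*(-) = \cat{T}(\Si^* R, -)$, and $\cat{P}_R$-epic maps. Concretely, I would construct a $\cat{P}_R$-Adams resolution of $X$ as in Diagram~\eqref{eq:AdamsResolProj} stage by stage, maintaining the inductive hypothesis that $\pi_*(\Si^{-j} X_j)$ is $N$-sparse and finitely presented as a right $\pi_* R$-module, and that $\Si^{-j} P_j$ is a finite coproduct $\op_i \Si^{m_i N} R$. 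The latter condition makes $\pi_*(\Si^{-j} P_j)$ free of finite rank and $N$-sparse (since $\pi_* R$ is), which is exactly what Condition~\ref{co:ProjSparse} demands.

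For the base case $j = 0$, I choose a finite generating set of $\pi_* X$; because $\pi_* X$ is $N$-sparse these generators lie in degrees divisible by $N$, and so they assemble into a $\cat{P}_R$-epic map $p_0 \colon P_0 = \op_i \Si^{m_i N} R \to X$. For the inductive step, I apply $\pi_*$ to the distinguished triangle $P_j \ral{p_j} X_j \ral{i_j} X_{j+1} \ral{\de_j} \Si P_j$. Since $i_j \in \cat{N}$ is $\cat{P}_R$-null and $p_j$ is $\cat{P}_R$-epic, the long exact sequence breaks into short exact sequences, giving the identification
\[
\pi_*\bigl(\Si^{-(j+1)} X_{j+1}\bigr) \cong \ker\bigl( \pi_*(\Si^{-j} P_j) \to \pi_*(\Si^{-j} X_j) \bigr).
\]
Both modules on the right are $N$-sparse and finitely presented, so this kernel is $N$-sparse, and is finitely presented by the right-coherence of $\pi_* R$ --- the right-module version of the fact cited from \cite{Bourbaki98} in the proof of Proposition~\ref{pr:CoherentRing}, namely that over a right-coherent ring the finitely presented right modules form an abelian subcategory closed under kernels. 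Repeating the base-case construction on $\Si^{-(j+1)} X_{j+1}$, whose generators again sit in degrees $\equiv 0 \pmod N$, produces $\Si^{-(j+1)} P_{j+1} = \op_i \Si^{m_i N} R$ together with a $\cat{P}_R$-epic map to $X_{j+1}$, closing the induction.

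Since the whole setup of this subsection is self-dual, I expect no genuine obstacle: the argument is a faithful transcription of the injective case. The only points demanding care are the degree bookkeeping around the shifts $\Si^{-j}$ --- one must check that the connecting map places $\pi_*(\Si^{-(j+1)} X_{j+1})$ in degrees $\equiv 0 \pmod N$ so that the $N$-sparseness is preserved at each stage --- and the correct invocation of \emph{right} coherence (rather than left), since $\pi_* X$ is a right $\pi_* R$-module, to guarantee that each kernel remains finitely presented.
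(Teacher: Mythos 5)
Your proposal is correct and is exactly the intended argument: the paper gives no separate proof of Proposition~\ref{pr:CoherentRingProj}, leaving it as the formal dual of Proposition~\ref{pr:CoherentRing}, and your stage-by-stage construction is a faithful dualization of that proof (finite coproducts of $\Si^{m_i N}R$ replacing finite products of $\Si^{m_i N}E$, the short exact sequences $0 \to \pi_*(\Si^{-(j+1)}X_{j+1}) \to \pi_*(\Si^{-j}P_j) \to \pi_*(\Si^{-j}X_j) \to 0$ replacing the kernel computation for $E^{*-1}Y_1$, and right coherence in place of left coherence). The two points you flag for care --- the degree shifts and the right-module version of the Bourbaki citation --- are handled correctly.
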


The following is a variant of \cite{Patchkoria12}*{Lemma 2.2.2}, where we do not assume that $R$ is a generator.
It identifies the $E_2$ term of the spectral sequence associated to the projective class $\cat{P}_R$.
The proof is straightforward.

\begin{prop}\label{pr:ExtEndoRing}
Assume that the object $R$ is compact.
\begin{enumerate}
\item Let $P$ be in the projective class $\cat{P}_R$. Then the map of abelian groups
\[ %
\cat{T}(P,Y) \to \Hom_{\pi_* R} (\pi_* P, \pi_* Y)
\] %
is an isomorphism for every object $Y$.
\item There is an isomorphism
\[
\Ext^{s}_{\cat{P}_R}(X,Y) \cong \Ext^{s}_{\pi_* R}(\pi_* X, \pi_* Y)
\]
which is natural in $X$ and $Y$. 
\end{enumerate}
\end{prop}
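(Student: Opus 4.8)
The plan is to prove (1) directly and then bootstrap (2) by the standard comparison of relative derived functors with ordinary $\Ext$ over $\pi_* R$. Throughout, the displayed map sends $f \colon P \to Y$ to the induced map $\pi_* f$ on homotopy; this is $\pi_* R$-linear by associativity of composition and is natural in both variables, so the only content is that it is bijective.

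For (1) I would first verify the claim for $P = \Si^n R$ and then propagate it through coproducts and retracts. When $P = \Si^n R$ the left side is $\cat{T}(\Si^n R, Y) = \pi_n Y$, while $\pi_*(\Si^n R)$ is the free right $\pi_* R$-module on one generator in degree $n$, so a degree-preserving $\pi_* R$-linear map $\pi_*(\Si^n R) \to \pi_* Y$ is determined by the image of that generator and the right side is again $\pi_n Y$; the comparison map is the resulting identity. For a coproduct $P = \op_i \Si^{n_i} R$ the left side is $\prod_i \cat{T}(\Si^{n_i} R, Y)$ by the universal property of coproducts, and here compactness enters: since $R$, and hence each $\Si^k R$, is compact, one has $\pi_k(\op_i \Si^{n_i} R) = \op_i \pi_k(\Si^{n_i} R)$, so $\pi_* P$ is the free module $\op_i \Si^{n_i} \pi_* R$, whose degree-$0$ $\Hom$ into $\pi_* Y$ is likewise $\prod_i \pi_{n_i} Y$. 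Thus the map is an isomorphism for coproducts. Finally a general $P \in \cat{P}_R$ is a retract of such a coproduct, and by naturality the comparison map for $P$ is a retract of the one for the coproduct; a retract of an isomorphism is an isomorphism.

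For (2) the key observation is that $\pi_*$ carries a $\cat{P}_R$-projective resolution of $X$ to a projective resolution of $\pi_* X$ over $\pi_* R$. Dualizing the injective resolution \eqref{eq:InjResol}, an Adams resolution of $X$ as in Diagram~\eqref{eq:AdamsResolProj} yields a $\cat{P}_R$-exact complex $\cdots \to \Si^{-1} P_1 \to P_0 \to X \to 0$ with every term in $\cat{P}_R$. Because $\cat{P}_R$ is stable, each $\Si^k R$ is projective, so applying the functors $\cat{T}(\Si^k R,-)$ (i.e.\ forming $\pi_*$) preserves exactness; moreover each $\pi_*(\Si^{-s} P_s)$ is a retract of a free $\pi_* R$-module, hence projective. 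So $\pi_*(\Si^{-\bullet} P_\bullet) \to \pi_* X$ is a projective resolution. By definition (dual to the lemma identifying the $E_2$-term), $\Ext^s_{\cat{P}_R}(X, Y)$ is the cohomology of $\cat{T}(\Si^{-\bullet} P_\bullet, Y)$, and part (1) gives a natural isomorphism of this cochain complex with $\Hom_{\pi_* R}(\pi_*(\Si^{-\bullet} P_\bullet), \pi_* Y)$, whose cohomology is $\Ext^s_{\pi_* R}(\pi_* X, \pi_* Y)$. Taking cohomology yields the isomorphism, with naturality in $X$ and $Y$ coming from naturality of (1) together with the comparison of projective resolutions.

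The bookkeeping I would suppress is the internal grading: to recover the full bigraded $\Ext^{s,t}$ one runs the argument with all suspensions of $Y$ at once, replacing the degree-$0$ $\Hom$ by the graded $\Hom$ and threading the extra index $t$ through formally. The only genuinely nonformal step is the use of compactness in (1) to identify $\pi_*$ of an infinite coproduct with the corresponding free $\pi_* R$-module (Example~\ref{ex:Compact}); without it $\pi_* P$ would fail to be free and the comparison map need not be an isomorphism. Everything after that is the standard dictionary between relative homological algebra in $\cat{T}$ and ordinary homological algebra over the endomorphism ring $\pi_* R$.
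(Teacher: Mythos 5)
Your proposal is correct and is precisely the argument the authors intend: the paper omits the proof, declaring it straightforward (as a variant of Patchkoria's Lemma~2.2.2), and your route---establishing (1) for a single $\Si^n R$, extending to infinite coproducts via compactness and to retracts by naturality, then proving (2) by observing that $\pi_*$ carries a $\cat{P}_R$-projective resolution coming from an Adams resolution to a projective resolution over $\pi_* R$ and invoking (1) levelwise---is the standard dictionary being alluded to. The only point worth flagging is that you correctly isolate where compactness is genuinely needed (identifying $\pi_*$ of a coproduct with the free module, both in (1) and in the projectivity of $\pi_* P_s$ in (2)), which is exactly the hypothesis the proposition imposes.
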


\subsection{Examples}\label{ss:sparse-examples}

Theorem~\ref{th:SparseDrProj} applies to modules over certain ring spectra. We describe some examples, along the lines of \cite{Patchkoria12}*{Examples 2.4.6 and 2.4.7}.

\begin{ex}\label{ex:RingSpectrum}
Let $R$ be an $A_{\infty}$ ring spectrum, and let $h\Mod{R}$ denote the homotopy category of the stable model category of (right) $R$-modules %
\cite{SchwedeS03}*{Example 2.3(ii)} 
\cite{EKMM97}*{\S III}. Then $R$ itself, the free $R$-module of rank $1$, is a compact generator for $h\Mod{R}$. The $R$-homotopy of an $R$-module spectrum $X$ is the usual homotopy of $X$, as suggested by the notation:
\[
h\Mod{R}(\Si^n R, X) \cong h\Mod{S}(S^n, X) = \pi_n X.
\]
In particular, the graded endomorphism ring $\pi_* R$ is the usual coefficient ring of $R$.

The projective class $\cat{P}_R$ is the ghost projective class \cite{Christensen98}*{\S 7.3}, generalizing  Example~\ref{ex:GhostSphere}, where $R$ was the sphere spectrum $S$. The Adams spectral sequence relative to $\cat{P}_R$ is 
the universal coefficient spectral sequence
\[
\Ext_{\pi_* R}^{s}(\Si^t \pi_* X, \pi_* Y) \Ra h\Mod{R}(\Si^{t-s} X,Y)
\]
as described in \cite{EKMM97}*{\S IV.4} and~\cite{Christensen98}*{Corollary 7.12}. %
We used Proposition~\ref{pr:ExtEndoRing} to identify the $E_2$ term.

Some $A_{\infty}$ ring spectra $R$ with sparse homotopy $\pi_* R$ are discussed in \cite{Patchkoria12}*{\S 4.3, 5.3, 6.4}. In view of Proposition~\ref{pr:ExtEndoRing}, the Adams spectral sequence in $h\Mod{R}$ collapses at the $E_2$ page if $\pi_* R$ has (right) global dimension less than $2$. %

The Johnson--Wilson spectrum $E(n)$ has coefficient ring
\[
\pi_* E(n) = \Z_{(p)}[v_1, \ldots, v_n, v_n^{-1}], \quad \abs{v_i} = 2(p^i - 1),
\]
which has global dimension $n$ and is $2(p-1)$-sparse. Hence, Theorem~\ref{th:SparseDrProj} applies in this case to the differentials $d_r$ with $r \leq 2(p-1)$, while $d_r$ is zero for $r > n$.
Likewise, connective complex $K$-theory $ku$ has coefficient ring
\[
\pi_* ku = \Z[u], \quad \abs{u} = 2,
\]
which has global dimension $2$ and is $2$-sparse.
\end{ex}

\begin{ex}\label{ex:DGA}
Let $R$ be a differential graded (\emph{dg} for short) algebra over a commutative ring $k$, and consider the category of dg $R$-modules $\dgMod{R}$. The homology $H_* X$ of a dg $R$-module is a (graded) $H_* R$-module. %
The derived category $D(R)$ is defined as the localization of $\dgMod{R}$ with respect to quasi-isomorphisms. 
The free dg $R$-module $R$ is a compact generator of $D(R)$. The $R$-homotopy of an object $X$ of $D(R)$ is its homology $\pi_* X = H_* X$. In particular, the graded endomorphism ring of $R$ in $D(R)$ is the graded $k$-algebra $H_* R$.

The Adams spectral sequence relative to $\cat{P}_R$ is 
an Eilenberg--Moore spectral sequence
\[
\Ext_{H_* R}^{s} \left( \Si^t H_* X, H_* Y \right) \Ra D(R)(\Si^{t-s} X, Y)
\]
from ordinary $\Ext$ to differential $\Ext$, as described in \cite{BarthelMR14}*{\S 8, 10}. See also \cite{KrizM95}*{\S III.4}, \cite{HoveyPS97}*{Example 10.2(b)}, 
and \cite{EilenbergM66}.
\end{ex}

\begin{rem}
Example~\ref{ex:DGA} can be viewed as a special case of Example~\ref{ex:RingSpectrum}. Letting $HR$ denote the Eilenberg--MacLane spectrum associated to $R$, the categories $\Mod{HR}$ and $\dgMod{R}$ are Quillen equivalent, by \cite{SchwedeS03}*{Example 2.4(i)} \cite{Shipley07HZ}*{Corollary 2.15}, yielding a triangulated equivalence $h\Mod{HR} \cong D(R)$. The generator $HR$ corresponds to the generator $R$ via this equivalence.
\end{rem}

\begin{ex}\label{ex:Ring}
Let $R$ be a ring, viewed as a dg algebra concentrated in degree $0$. Then Example~\ref{ex:DGA} yields the ordinary derived category $D(R)$. The graded endomorphism ring of $R$ in $D(R)$ is $H_* R$, which is $R$ concentrated in degree $0$.
This is $N$-sparse for any $N \geq 2$.  

The Adams spectral sequence relative to $\cat{P}_R$ is the hyperderived functor spectral sequence
\[
\Ext_{H_* R}^{s} \left( \Si^t H_* X, H_* Y \right) = \prod_{i \in \Z} \Ext_{R}^{s} \left( H_{i-t} X, H_{i} Y \right) \Ra D(R)(\Si^{t-s}X, Y) = \mathbf{Ext}_{R}^{s-t}(X,Y)
\]
from ordinary $\Ext$ to hyper-$\Ext$, as described in \cite{Weibel94}*{\S 5.7, 10.7}.%
\end{ex}

\appendix

\section{Computations in the stable module category of a group}\label{se:StableMod}

In this appendix, we give some computations in the stable module category
of a group algebra $kG$, where $k$ is a field and $G$ is a finite group.
These computations are used in Proposition~\ref{pr:proper-inclusion}.

Write $R$ for the group algebra $kG$.
We will work in the stable module category $\cat{T} := \StMod(R)$.
This is the category whose objects are (left) $R$-modules,
and whose morphisms from $M$ to $N$ consist of the $R$-module homomorphisms
from $M$ to $N$ modulo those that factor through a projective module.
An isomorphism in $\StMod(R)$ is called a \Def{stable equivalence},
and two $R$-modules $M$ and $N$ are stably equivalent if and only if
there are projectives $P$ and $Q$ such that $M \oplus P \cong N \oplus Q$.
The category $\StMod(R)$ is triangulated.
The suspension $\Si M$ is defined by choosing an embedding of $M$
into an injective module and taking the quotient, 
the desuspension $\Om M$ is defined by choosing a surjection from
a projective to $M$ and taking the kernel,
and these are inverse to each other because the projectives and injectives coincide.
Given a short exact sequence
\[
  0 \lra M_1 \lra M_2 \lra M_3 \lra 0 
\]
and an embedding of $M_1$ into an injective module $I$, one can choose maps
\begin{equation}\label{eq:stmod-triangle}
  \cxymatrix{
    0 \ar[r] & M_1 \ar[r] \ar@{=}[d] & M_2 \ar[r] \ar@{-->}[d] & M_3 \ar[r] \ar@{-->}[d] & 0 \\
    0 \ar[r] & M_1 \ar[r]            & I \ar[r]                & \Si M_1 \ar[r]       & 0 \\
  }
\end{equation}
making the diagram commute in $\Mod{R}$. The distinguished triangles are defined to be those triangles isomorphic in $\StMod(R)$ to one of
the form
\[
  M_1 \lra M_2 \lra M_3 \lra \Si M_1
\]
constructed in this way.

Rather than working with respect to an injective class in $\cat{T}$, we will
consider the \Def{ghost projective class} $\cat{P}$, which is generated by the trivial module $k$.
More precisely, $\cat{P}$ consists of the retracts of coproducts $\op_i \, \Si^{n_i} k$,
and the associated ideal consists of the maps which induce the zero map in Tate cohomology.
See \cite{Chebolu08}*{\S 4.2} for details.

\begin{prop}\label{pr:proper-inclusion-C4}
Let $G$ be the cyclic group $C_4 = \lan g \mid g^4 = 1 \ran$,
let $k = \F_2$, and write $R = kG$.
There exists an $R$-module $M$, an Adams resolutions of $M$ with respect to
the ghost projective class, and a map $\ka : M \to M$ such that
the inclusion 
$\lan \ka, d_1, \de \ran (\Sigma p) \subseteq \lan \ka, d_1, d_1 \ran$
from Proposition~\ref{pr:inclusions} (dualized) is proper.
\end{prop}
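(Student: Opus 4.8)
The plan is to carry out an explicit computation in $\cat{T} = \StMod(R)$ for $R = kG = \F_2[C_4]$, which we identify with $\F_2[x]/(x^4)$ by putting $x = g-1$. This is a local self-injective algebra whose only nonzero indecomposable objects of $\cat{T}$ are $M_i := R/(x^i)$ for $i=1,2,3$, with $M_1 = k$ the trivial module and the free module $R$ stably zero. First I would record the relevant stable homotopy: from the defining sequences $0 \to \operatorname{soc}(R) \to R \to \Si k \to 0$ (and their analogues) one finds that $\Si$ is $2$-periodic, with $\Si k \cong M_3$, $\Si M_3 \cong k$ and $\Si M_2 \cong M_2$, and that the graded endomorphism ring of the generator is $\pi_* k = \cat{T}(\Si^* k, k) \cong \hat H^{-*}(C_4;\F_2) \cong \F_2[y^{\pm 1}] \otimes \Lambda(\eta)$, with $\abs{y}=2$, $\abs{\eta}=1$ and $\eta^2 = 0$. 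The crucial structural facts are that this ring is \emph{not} $N$-sparse for any $N \geq 2$ — so that Theorem~\ref{th:SparseDrProj} does not apply and a proper inclusion becomes possible — and that the extension $0 \to k \to M_2 \to k \to 0$ is \emph{nonsplit}. The latter is the stable-module analogue of the Moore-spectrum extension responsible for $\pi_2(S/2) = \Z/4$ in the introductory example, and it is what will ultimately separate the two Toda brackets.

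Next I would fix the data. I would resolve $X = M_2$ with respect to the ghost class $\cat{P}$: a direct computation of $\pi_* M_2$ shows that it is $\F_2$ in every degree but is \emph{not} free over $\pi_* k$, because $\eta$ annihilates the bottom class $\iota \colon k \cong \operatorname{soc}(M_2) \hookrightarrow M_2$ of $\pi_* M_2$, even though the analogous multiplication $\pi_0 k \to \pi_1 k$ by $\eta$ is nonzero. Consequently the minimal $\cat{P}$-epimorphism onto $M_2$ is $P_0 = k \oplus M_3 \to M_2$, and iterating produces an explicit Adams resolution in which every $P_s$ is a sum of copies of $k$ and $M_3$, with completely computable maps $p_s$, $\de_s$ and $d_1 = \de_s p_{s+1}$. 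For the outer map I would take $\ka \colon M_2 \to M_2$ to be the nonzero nilpotent self-map (multiplication by $x$), the unique nonidentity element of $\underline{\Hom}(M_2,M_2) = \F_2\{1,n\}$ with $n^2 = 0$; one then selects the bidegree $(s,t)$ so that the $d_2$ in question is the first potentially nontrivial one.

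With this data in place I would compute both brackets. By Lemma~\ref{le:indeterminacy} the full bracket $\lan \ka, d_1, d_1 \ran$ is a coset of an explicit indeterminacy subgroup, and by the (dualized) Proposition~\ref{pr:inclusions} the restricted bracket $\lan \ka, d_1, \de \ran(\Si p)$ is a distinguished subset of it, differing precisely in that the innermost lift is constrained to factor through $\Si p$ rather than ranging over all lifts to the fibre. The extra freedom in the full bracket lives in a stable hom-group of the form $\cat{T}(\Si^a k, \Si^b k) \cong \hat H^{b-a}(C_4;\F_2)$, which vanishes under the sparseness hypothesis of Theorem~\ref{th:SparseDrProj} but here is nonzero because $\hat H^*$ is supported in every degree. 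I would then exhibit one specific element of $\lan \ka, d_1, d_1 \ran$ and verify, via the octahedral diagram underlying the fibre-cofiber and iterated-cofiber descriptions in $\StMod(R)$, that it cannot arise from any lift through $\Si p$, thereby establishing that the inclusion is strict.

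The main obstacle is this last verification: it is not enough to compare the \emph{sizes} of the two cosets, since one must produce a concrete element of the full Toda bracket and prove that no admissible choice in the restricted construction realizes it. This forces an honest diagram chase with the specific modules $k$, $M_3$ and $M_2$, and it is exactly here that the nonsplit extension $0 \to k \to M_2 \to k \to 0$ — the analogue of $\pi_2(S/2)=\Z/4$ — does the essential work, guaranteeing that the constrained lift genuinely misses part of the bracket. A secondary, purely bookkeeping, difficulty is tracking the $2$-periodic suspension through the resolution, though working over $\F_2$ renders all signs harmless.
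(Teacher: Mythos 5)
Your example is the right one---the paper's proof also works in $\StMod(\F_2 C_4)$ with $M = R/x^2$ and the projective cover $p = \mu_x + \mu_1 \colon k \op \Om k \to M$ (your $k \op M_3$)---but the way you propose to finish has a genuine gap, and your choice of $\ka$ does not work as stated. On the finish: since the inclusion $\lan \ka, d_1, \de \ran (\Si p) \subseteq \lan \ka, d_1, d_1 \ran$ is already supplied by the dual of Proposition~\ref{pr:inclusions}, it \emph{is} enough to compare sizes; if the left-hand set is a singleton and the right-hand set is a coset of a nonzero subgroup, properness is automatic. Your assertion that ``it is not enough to compare the sizes of the two cosets,'' and the ensuing plan to exhibit a specific element of the full bracket and rule out every admissible lift through $\Si p$ by an octahedral diagram chase, is exactly the step you yourself flag as the main obstacle, and you give no argument for it; it is also unnecessary. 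The paper's proof is precisely the size comparison: every composite $[\,a\mu_x \ \ b\mu_1\,]\circ d_1$ and every composite $\ka f (\Si p)$ is stably null, so $\lan\ka, d_1, \de\ran(\Si p)$ has no indeterminacy and is the single element $d_2[\ka]$; meanwhile the indeterminacy of $\lan \ka, d_1, d_1\ran$ contains $\ka\circ\cat{T}\bigl(\Si(k\op\Om k), k\op\Om k\bigr)$, and since $\Si(k\op\Om k)\cong k\op\Om k$ this contains $\ka$ composed with an isomorphism, hence is nonzero whenever $\ka\neq 0$. The nonsplit extension $0\to k\to M\to k\to 0$ that you single out as ``doing the essential work'' plays no role in this mechanism; what matters is that the projective $k\op\Om k$ is isomorphic to its own suspension, so the full bracket acquires indeterminacy that restriction along $\Si p$ kills.

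Separately, $\ka$ must be a nonzero stable map from the projective $k \op \Om k$ to $M$, i.e., an element of the $E_1$-term, for the brackets $\lan\ka, d_1, d_1\ran$ and $\lan\ka, d_1, \de\ran$ to typecheck against $d_1\colon k\op\Om k\to k\op\Om k$. Worse, your specific choice $\mu_x\colon M\to M$ corresponds, after precomposition with $p$, to the \emph{zero} class in $E_1$: the component $\mu_x\mu_x\colon k\to M$ is zero on the nose, and $\mu_x\mu_1=\mu_x\colon \Om k\to M$ factors through the free module $R$. For $\ka=0$ both brackets equal $\{0\}$ and the inclusion is an equality, so your chosen $\ka$ would defeat the construction. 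Any nonzero $[\,a\mu_x \ \ b\mu_1\,]\colon k\op\Om k\to M$ works.
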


\begin{proof}
To produce our counterexample, we will consider the Adams spectral sequence abutting
to $\StMod(M, \Om^* M)$, where $M$ is a two-dimensional module with basis vectors
that are interchanged by $g$.

In order to make concrete computations, it will be helpful to observe that,
as a $k$-algebra, $R$ is the truncated polynomial algebra
\[
R = k[g] \big/ (g^4 - 1)
= k[g] \big/ (g-1)^4
= k[x] \big/ x^4 ,
\]
where we define $x := g-1 \in R$.
In this notation, the trivial module $k$ is $R / x$ and the module $M$ is $R / x^2$.

We will need to compute their desuspensions, which are given, as $R$-modules, by:
\begin{align*}
\Om k &= \ker \left( R \surj k \right)
= k \left\{ x, x^2, x^3 \right\}
\cong R / x^3 \\
\Om^2 k &= \ker \left( R \surj R / x^3 \right)
= k \left\{ x^3 \right\}
\cong R / x = k \\
\Om M &= \ker \left( R \surj R / x^2 \right)
= k \left\{ x^2, x^3 \right\}
\cong R / x^2 = M,
\end{align*}
where curly brackets denote the $k$-vector space with the given generating set.

In order to produce a $\cat{P}$-epic map to $M$, we need to know the maps
from suspensions of $k$ to $M$.  Since $k$ is $2$-periodic, the following
calculations give us what we need:
\begin{align*}
\cat{T}(k,M) &= \Mod{R}(k,M) / {\sim}
\cong \Mod{R}( R/x, R/x^2) / {\sim}
= k \left\{ \mu_x \right\} / {\sim}
= k \left\{ \mu_x \right\} \\
 \cat{T}(\Om k,M) &= \Mod{R}(\Om k,M) / {\sim}
= \Mod{R}( R/x^3, R/x^2) / {\sim}
= k \left\{ \mu_1, \mu_x \right\} / {\sim}
= k \left\{ \mu_1 \right\},
\end{align*}
where $f \sim g$ if $f - g$ factors through a projective,
and $\mu_r \colon R/x^m \to R/x^n$ denotes the $R$-module map given by
multiplication by $r \in R$ (when this is well-defined).
Here, we used the fact that $\mu_x \colon R/x^3 \to R/x^2$ is stably null, since it factors as
\[
R/x^3 \ral{\mu_x} R \ral{\mu_1} R/x^2.
\]
Using this, we obtain a $\cat{P}$-epic map $p := \mu_x + \mu_1 \colon k \op \Om k \to M$.
Since $p$ is surjective, its fiber is its kernel.
This kernel is generated by $(1, x)$ and is readily seen to be isomorphic to $M$.
Under the identification of $\Om M$ with $M$, 
the natural map $\Om M \to M$ (using the dual of Equation~\eqref{eq:stmod-triangle}) is $\mu_x$.
Since we are working at the prime $2$, fibre sequences and cofiber sequences agree, so
we obtain the following Adams resolution of $M$
\[
\xymatrix{
M \circar[rr]^{\mu_x} & & M \ar[dl]^{\de} \circar[rr]^{\mu_x} & & M \ar[dl]^{\de} \circar[rr]^{\mu_x} & & M \ar[dl]^{\de} \circar[r] & \cdots \\
& k \op \Om k \ar@{->>}[ul]^{p} & & k \op \Om k \ar@{->>}[ul]^{p} & & k \op \Om k \ar@{->>}[ul]^{p} , & & \\
}
\]
where $\delta = \left[ \begin{smallmatrix} \mu_1 \\ \mu_x \end{smallmatrix} \right]$,
and we have chosen to put the degree shifts on the horizontal arrows.

We will be considering the Adams spectral sequence formed by applying the
functor $\cat{T}(-, M)$.
The map $d_1 = \de p \colon k \op \Om k \to k \op \Om k$ is
$\left[ \begin{smallmatrix} 0 & \mu_1 \\ \mu_{x^2} & \mu_x \end{smallmatrix} \right]$,
which simplifies to
$\left[ \begin{smallmatrix} 0 & \mu_1 \\ \mu_{x^2} & 0 \end{smallmatrix} \right]$,
using the fact that $\mu_x \colon \Om k \to \Om k$ is stably null,
since it factors as $\Om k \ral{\mu_x} R \ral{\mu_1} \Om k$.
The stable maps $k \op \Om k \to M$ are of the form $[\, a \mu_x \; b \mu_1 \,]$
for $a$ and $b$ in $k$,
and all composites $[\, a \mu_x \; b \mu_1 \,] \circ d_1$ are stably null.
Using this twice for $d_1$'s in different positions, one sees that
if $\ka : k \op \Om k \to M$ is any map,
then $d_2[\ka]$ is defined and has no indeterminacy.

Now consider $\lan \ka, d_1, \de \ran (\Si p)$.
One part of the indeterminacy here consists of maps of the form
$f \Si(\de) \Si(p) = f \Si(d_1)$, for $f : \Si(k \op \Om k) \to M$.
As above, all such composites are zero.
The other part of the indeterminacy consists of maps of the form
$\ka f \Si(p)$, for $f : \Si M \to k \op \Om k$, and again,
one can show that all such composites are zero.
So $\lan \ka, d_1, \de \ran (\Si p)$ has no indeterminacy
 and therefore equals $d_2[\ka]$.

Finally, consider $\lan \ka, d_1, d_1 \ran$.
The part of the indeterminacy involving $d_1$ is again zero.
The other part consists of all composites $\ka f$,
for $f : \Si(k \op \Om k) \to k \op \Om k$.
Since there is an isomorphism $\Si(k \op \Om k) \to k \op \Om k$,
this indeterminacy is non-zero if and only if $\ka$ is non-zero.

Since non-zero maps $\ka : k \op \Om k \to M$ exist, we conclude
that the containment
\[
\lan \ka, d_1, \de \ran (\Si p) \subseteq \lan \ka, d_1, d_1 \ran
\]
can be proper.
\end{proof}

\begin{rem}
If in the proof above we take $\ka$ to be the map
$[\, \mu_x \; 0 \,] : k \op \Om k \to M$, then using the same techniques
one can show that
\begin{align*}
  \lan \ka, d_1, \de \ran &= \{ 1_M \} , \\
  \lan \ka, d_1, \de \ran (\Si p) &= \{ \Si p \} = d_2[\ka] = \left\{ \begin{bsmallmatrix} \mu_1 & \mu_x \end{bsmallmatrix} \right\}, \\
\intertext{and}
   \lan \ka, d_1, d_1 \ran &= \left\{ \begin{bsmallmatrix} \mu_1 & b \mu_x \end{bsmallmatrix} \mid b \in \F_2 \right\}
\end{align*}
as subsets of $\cat{T}(\Om k \op k, M) \cong \cat{T} \left( \Si (k \op \Om k), M \right)$, where we identify $M$ with $\Om M$ and $\Si M$, as before.
\end{rem}

\begin{rem}
Theorem~\ref{th:SparseDrProj} does not apply to the example in Proposition~\ref{pr:proper-inclusion-C4}. Indeed, the graded endomorphism ring of $k$ in $\StMod(kG)$ is the Tate cohomology ring $\tilde{H}^n(G;k) = \StMod(kG)(\Om^n k, k)$ 
\cite{Carlson96}*{\S 6}.
This ring is not sparse, as we have $\tilde{H}^{-1}(G;k) \neq 0$.
\end{rem}

\begin{ex}\label{ex:negative}
The following example illustrates the fact that a Toda bracket need not be equal to its own negative, as noted in Remark \ref{re:3-fold-negation}.

Consider the ground field $k = \F_3$ and the group algebra $R = kC_3 \cong k[x]/x^3$,
where we denote $x = g-1 \in R$ for $g \in C_3$ a generator.
Consider the $R$-modules $k = R/x$ and $M = R/x^2$. Let us compute the Toda bracket of the diagram
\[
\xymatrix{
M \ar[r]^-{\mu_1} & k \ar[r]^-{\mu_x} & M \ar[r]^-{\mu_1} & k \\
}
\]
in the triangulated category $\cat{T} = \StMod(R)$. We will use appropriate isomorphisms $\Si k \cong M$ and $\Si M \cong k$, and in particular compute the Toda bracket as a subset of $\cat{T}(k,k) \cong \cat{T}(\Si M, k)$. Via these isomorphisms, the suspension $\Si \mu_1 \colon \Si M \to \Si k$ equals $\mu_x \colon k \to M$. Consider the commutative diagram in $\cat{T}$
\[
\xymatrix{
& & k \ar@{-->}[d]_{\Si \al} \ar[r]^-{-\mu_x} & M \ar@{=}[d] \\
k \ar[r]^-{\mu_x} & M \ar@{=}[d] \ar[r]^-{\mu_1} & k \ar@{-->}[d]^{\be} \ar[r]^-{\mu_x} & M \\
& M \ar[r]^-{\mu_1} & k & \\ 
}
\]
where the middle row is distinguished. The only choices for the dotted arrows are $\Si \al = -1_k$ and $\be = 1_k$, from which we conclude
\[
\lan \mu_1, \mu_x, \mu_1 \ran_{\fc} = \{ -1_k \} \subset \cat{T}(k,k).
\]
\end{ex}

\section{\texorpdfstring{$3$}{3}-fold Toda brackets determine the triangulated structure}\label{se:Heller}

Heller proved the following theorem in \cite{Heller68}*{Theorem 13.2}. We present an arguably simpler proof here. The argument was kindly provided by Fernando Muro.

\begin{thm}
Let $\cat{T}$ be a triangulated category. Then the diagram $X \ral{f} Y \ral{g} Z \ral{h} \Si X$ in $\cat{T}$ is a distinguished triangle if and only if the following two conditions hold.
\begin{enumerate}
\item The sequence of abelian groups
\[
\xymatrix{
\cat{T}(A,\Si^{-1} Z) \ar[r]^-{(\Si^{-1} h)_*} & \cat{T}(A,X) \ar[r]^-{f_*} & \cat{T}(A,Y) \ar[r]^-{g_*} & \cat{T}(A,Z) \ar[r]^-{h_*} & \cat{T}(A,\Si X) \\
}
\]
is exact for every object $A$ of $\cat{T}$.
\item The Toda bracket $\lan h, g, f \ran \subseteq \cat{T}(\Si X, \Si X)$ contains the identity map $1_{\Si X}$.
\end{enumerate}
\end{thm}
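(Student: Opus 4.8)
The plan is to prove both directions by comparing the diagram $X \ral{f} Y \ral{g} Z \ral{h} \Si X$ to an honest cofiber triangle on $f$ and showing that the comparison map is an isomorphism. For the forward implication, suppose the triangle is distinguished. Then condition~(1) is simply the long exact sequence obtained by applying $\cat{T}(A,-)$. For condition~(2), I would choose a distinguished cofiber triangle $X \ral{f} Y \ral{g'} C_f \ral{h'} \Si X$; since it and the given triangle share the first map $f$, the triangulated axioms let me complete the pair $(1_X, 1_Y)$ to a morphism of triangles $(1_X, 1_Y, \phy)$ whose fourth component is $\Si 1_X = 1_{\Si X}$ and whose middle component $\phy$ is an isomorphism. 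Reading this comparison against Diagram~\eqref{eq:CofCof} exhibits $1_{\Si X}$ as an element of $\lan h, g, f \ran_{\cc} = \lan h, g, f \ran$.

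For the converse, assume (1) and (2). The membership $1_{\Si X} \in \lan h, g, f \ran_{\cc}$ supplies a commutative diagram
\[
\xymatrix{
X \ar@{=}[d] \ar[r]^-{f} & Y \ar@{=}[d] \ar[r]^-{g'} & C_f \ar[d]^{\phy} \ar[r]^-{h'} & \Si X \ar@{=}[d] \\
X \ar[r]^-{f} & Y \ar[r]^-{g} & Z \ar[r]^-{h} & \Si X \\
}
\]
with distinguished top row; that is, a morphism of triangles $(1_X, 1_Y, \phy)$ from the cofiber triangle to the given diagram. It then suffices to prove that $\phy$ is an isomorphism, since the given diagram would then be isomorphic to a distinguished triangle, hence distinguished. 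By the Yoneda lemma it is enough to show that $\phy_* \colon \cat{T}(A, C_f) \to \cat{T}(A, Z)$ is an isomorphism for every object $A$. Applying $\cat{T}(A,-)$ produces a ladder whose top row is the (fully exact) long exact sequence of the cofiber triangle, whose bottom row is exact at $\cat{T}(A,X)$, $\cat{T}(A,Y)$, and $\cat{T}(A,Z)$ by condition~(1), and all of whose vertical maps are identities except the middle one $\phy_*$. I would then run the injectivity and surjectivity halves of the five lemma on this ladder.

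The step I expect to be the crux is surjectivity, which requires the bottom composite $\cat{T}(A,Z) \ral{h_*} \cat{T}(A, \Si X) \ral{(\Si f)_*} \cat{T}(A, \Si Y)$ to vanish, equivalently $(\Si f) h = 0$. This is not one of the three interior exactness statements, and it does \emph{not} follow from condition~(2) alone, which only yields $(\Si f) h \phy = 0$. Instead it is exactly what the leftmost term of condition~(1) provides: the inclusion $\mathrm{im}\,(\Si^{-1} h)_* \subseteq \ker f_*$ at $\cat{T}(A,X)$ forces $f \circ \Si^{-1} h = 0$, and suspending gives $(\Si f) \circ h = 0$ via the natural isomorphism $\Si \Si^{-1} \cong \id$. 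Granting this vanishing, the injectivity half uses only bottom exactness at $\cat{T}(A,Y)$ together with the fact that the left-hand vertical maps are identities, and the surjectivity half uses only bottom exactness at $\cat{T}(A,Z)$ together with the vanishing just obtained; both four-lemma diagram chases then close, so $\phy_*$ is an isomorphism for all $A$ and $\phy$ is an isomorphism. This is also what explains why the sequence in condition~(1) is stated with the extra term $\cat{T}(A, \Si^{-1} Z)$ on the left.
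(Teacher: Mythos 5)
Your proposal is correct and follows essentially the same route as the paper: the forward direction reads the given distinguished triangle (or a comparison with a chosen cofiber triangle) against Diagram~\eqref{eq:CofCof}, and the converse extracts $\phy \colon C_f \to Z$ from the bracket membership and shows $\phy_*$ is an isomorphism for all $A$ via the five lemma and Yoneda. Your extra observation that the surjectivity chase needs $(\Si f)h = 0$, supplied by the leftmost term $\cat{T}(A,\Si^{-1}Z)$ of condition~(1), is exactly the point the paper handles by extending the ladder one step to $(-\Si f)_*$ and citing exactness of the bottom row there.
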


\begin{proof} ($\Ra$) A distinguished triangle satisfies the first condition. For the second condition, consider the commutative diagram 
\[
\xymatrix{
X \ar@{=}[d] \ar[r]^-{f} & Y \ar@{=}[d] \ar[r]^-{g} & Z \ar[d]^{1_{Z}} \ar[r]^-{h} & \Si X \ar[d]^{1_{\Si X}} \\
X \ar[r]^-{f} & Y \ar[r]^-{g} & Z \ar[r]^-{h} & \Si X. \\
}
\]
Since the top row is distinguished, this diagram exhibits the membership $1_{\Si X} \in \lan h, g, f \ran$.

($\Leftarrow$) Assume that $1_{\Si X} \in \lan h, g, f \ran$ holds. By definition of the Toda bracket, there exists a map $\phy \colon C_f \to Z$ making the diagram 
\[
\xymatrix{
X \ar[r]^-{f} \ar@{=}[d] & Y \ar@{=}[d] \ar[r]^-{q} & C_f \ar[r]^-{\io} \ar[d]^{\phy} & \Si X \ar[d]^{1_{\Si X}} \\
X \ar[r]^-{f} & Y \ar[r]^-{g} & Z \ar[r]^-{h} & \Si X \\
}
\]
commute, where the top row is distinguished. To show that the bottom row is distinguished, it suffices to show that $\phy \colon C_f \to Z$ is an isomorphism. By the Yoneda lemma, it suffices to show that $\phy_* \colon \cat{T}(A,C_f) \to \cat{T}(A,Z)$ is an isomorphism for every object $A$ of $\cat{T}$.

Consider the diagram
\begin{equation} \label{eq:DiagCofibSeq}
\xymatrix{
X \ar[r]^-{f} \ar@{=}[d] & Y \ar@{=}[d] \ar[r]^-{q} & C_f \ar[r]^-{\io} \ar[d]^{\phy} & \Si X \ar[d]^{1_{\Si X}} \ar[r]^{- \Si f} & \Si Y \ar[d]^{1_{\Si Y}} \\
X \ar[r]^-{f} & Y \ar[r]^-{g} & Z \ar[r]^-{h} & \Si X \ar[r]^{- \Si f} & \Si Y \\
}
\end{equation}
Applying $\cat{T}(A,-)$ yields the diagram of abelian groups
\[
\xymatrix{
\cat{T}(A,X) \ar[r]^-{f_*} \ar@{=}[d] & \cat{T}(A,Y) \ar@{=}[d] \ar[r]^-{q_*} & \cat{T}(A,C_f) \ar[r]^-{\io_*} \ar[d]^{\phy_*} & \cat{T}(A,\Si X) \ar[d]^{1} \ar[r]^{(- \Si f)_*} & \cat{T}(A,\Si Y) \ar[d]^{1} \\
\cat{T}(A,X) \ar[r]^-{f_*} & \cat{T}(A,Y) \ar[r]^-{g_*} & \cat{T}(A,Z) \ar[r]^-{h_*} & \cat{T}(A, \Si X) \ar[r]^{(- \Si f)_*} & \cat{T}(A, \Si Y). \\
}
\]
The top row is exact, since the top row of \eqref{eq:DiagCofibSeq} is a cofiber sequence, and the bottom row is exact, using the first condition. By the 5-lemma, $\phy_*$ is an isomorphism.
\end{proof}

\begin{rem}
Some remarks about the first condition.
\begin{enumerate}
\item It implies $gf = g_* f_*(1_X) = 0$ and $hg = h_* g_*(1_Y) = 0$.
\item It is equivalent to the exactness of the long sequence (infinite in both directions)
\[
\xymatrix@C+4pt{
\hspace*{1em}\cdots \ar[r] 
& \cat{T}(A,\Si^n X) \ar[r]^-{(\Si^n f)_*} & \cat{T}(A, \Si^n Y) \ar[r]^-{(\Si^n g)_*} & \cat{T}(A, \Si^n Z) \ar[r]^-{(\Si^n h)_*} 
& \cat{T}(A,\Si^{n+1} X) \ar[r] 
& \cdots \\
}
\]
for every object $A$ of $\cat{T}$.
\item It is a weaker version of what is sometimes called a \emph{pre-triangle} \cite{Neeman01}*{\S 1.1}. Indeed, the condition states that the sequence
\[
\xymatrix @C=\bigcol {
H(\Si^{-1} Z) \ar[r]^-{H(\Si^{-1} h)} & H(X) \ar[r]^-{H(f)} & H(Y) \ar[r]^-{H(g)} & H(Z) \ar[r]^-{H(h)} & H(\Si X) \\
}
\]
is exact for every decent homological functor $H \colon \cat{T} \to \Ab$ of the form $H = \cat{T}(A,-)$.
\end{enumerate}
\end{rem}

\begin{cor}
Given the suspension functor $\Si \colon \cat{T} \to \cat{T}$, $3$-fold Toda brackets in $\cat{T}$ determine the triangulated structure. In particular, $3$-fold Toda brackets determine the higher Toda brackets, via the triangulation.
\end{cor}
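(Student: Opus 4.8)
The plan is to deduce both assertions directly from the biconditional in Heller's theorem just proved. That theorem says a triangle $X \ral{f} Y \ral{g} Z \ral{h} \Si X$ is distinguished precisely when two conditions hold: the exactness condition~(1) on the induced sequences $\cat{T}(A,-)$, and the condition~(2) that $1_{\Si X} \in \lan h, g, f \ran$. The crucial observation is that condition~(1) refers only to the suspension functor $\Si$, the Hom-groups of $\cat{T}$, and composition, and in particular does \emph{not} mention the triangulation, whereas condition~(2) is phrased entirely through a single $3$-fold Toda bracket.

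First I would make the word ``determine'' precise by comparing triangulations. Suppose $\cat{T}$ carries two triangulated structures with the \emph{same} suspension functor $\Si$, and suppose that for every composable triple $X \ral{f} Y \ral{g} Z \ral{h} \Si X$ the two structures assign the same subset $\lan h, g, f \ran \subseteq \cat{T}(\Si X, \Si X)$. I claim the two structures have the same distinguished triangles. Indeed, fix any candidate triangle as above. Condition~(1) is a statement about $(\cat{T}, \Si)$ alone, so it holds for one structure if and only if it holds for the other. Condition~(2), namely $1_{\Si X} \in \lan h, g, f \ran$, is likewise the same for both, since by hypothesis the two Toda brackets coincide. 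By Heller's theorem the triangle is then distinguished in the first structure exactly when it is distinguished in the second, which proves that the class of distinguished triangles is recovered from $\Si$ together with the $3$-fold Toda brackets.

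For the ``in particular'' clause, I would invoke the constructions of Section~\ref{se:HigherBrackets}: the higher Toda brackets (Definitions~\ref{def:HigherToda} and~\ref{def:NFiltered}) are built solely from distinguished triangles, via iterated fiber--cofiber (Toda family) constructions or, equivalently, via filtered objects. Hence they are functions of the triangulated structure. Since the first part shows that the $3$-fold Toda brackets already pin down that structure, they also pin down every higher Toda bracket.

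The only genuinely delicate point --- the main obstacle --- is the mild circularity that the $3$-fold Toda bracket is itself \emph{defined} using the triangulation. The statement must therefore be read as treating the $3$-fold bracket operation as given external data and asking what it recovers; once this reading is adopted, no computation is required and everything follows from the equivalence in Heller's theorem. I would also flag that fixing $\Si$ is genuinely necessary, since both the target $\cat{T}(\Si X, \Si X)$ of the bracket and the shifts $\Si^{-1}, \Si$ appearing in condition~(1) depend on the chosen suspension.
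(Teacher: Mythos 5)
Your proposal is correct and follows essentially the same route as the paper, which states the corollary as an immediate consequence of Heller's theorem: condition (1) depends only on $\Si$ and the additive structure, condition (2) is a single $3$-fold Toda bracket membership, so the biconditional recovers the class of distinguished triangles, and the higher brackets are then determined since their construction uses only distinguished triangles. Your explicit formalization via two triangulations sharing the same $\Si$, and your closing caveat about the indirectness of the determination, match the paper's intent (compare the remark following the corollary, which notes it is unclear whether higher brackets can be expressed \emph{directly} in terms of three-fold ones).
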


\begin{rem}
It is unclear to us if the higher Toda brackets can be expressed directly in terms of three-fold brackets.
\end{rem}

\begin{bibdiv}
\begin{biblist}

\bib{Adams60}{article}{
  author={Adams, J. F.},
  title={On the non-existence of elements of Hopf invariant one},
  journal={Ann. of Math. (2)},
  volume={72},
  date={1960},
  pages={20 \ndash 104},
  issn={0003-486X},
  review={\MR {0141119 (25 \#4530)}},
}

\bib{Adams74}{book}{
  author={Adams, J. F.},
  title={Stable homotopy and generalised homology},
  series={Chicago Lectures in Mathematics},
  volume={17},
  publisher={University of Chicago Press},
  address={Chicago, IL},
  date={1974},
}

\bib{Balmer02}{article}{
  author={Balmer, Paul},
  title={Triangulated categories with several triangulations},
  status={Preprint},
  eprint={http://www.math.ucla.edu/~balmer/Pubfile/TriangulationS.pdf},
  date={2002},
}

\bib{BarthelMR14}{article}{
  author={Barthel, Tobias},
  author={May, J. P.},
  author={Riehl, Emily},
  title={Six model structures for DG-modules over DGAs: model category theory in homological action},
  journal={New York J. Math.},
  volume={20},
  date={2014},
  pages={1077--1159},
  issn={1076-9803},
  review={\MR {3291613}},
}

\bib{BauesJ06}{article}{
  author={Baues, Hans-Joachim},
  author={Jibladze, Mamuka},
  title={Secondary derived functors and the Adams spectral sequence},
  journal={Topology},
  volume={45},
  date={2006},
  number={2},
  pages={295 \ndash 324},
  issn={0040-9383},
  review={\MR {2193337 (2006k:55031)}},
  doi={10.1016/j.top.2005.08.001},
}

\bib{BauesJ11}{article}{
  author={Baues, Hans-Joachim},
  author={Jibladze, Mamuka},
  title={Dualization of the Hopf algebra of secondary cohomology operations and the Adams spectral sequence},
  journal={J. K-Theory},
  volume={7},
  date={2011},
  number={2},
  pages={203 \ndash 347},
  issn={1865-2433},
  review={\MR {2787297 (2012h:55023)}},
  doi={10.1017/is010010029jkt133},
}

\bib{BauesB15}{article}{
  author={Baues, Hans-Joachim},
  author={Blanc, David},
  title={Higher order derived functors and the Adams spectral sequence},
  journal={J. Pure Appl. Algebra},
  volume={219},
  date={2015},
  number={2},
  pages={199--239},
  issn={0022-4049},
  review={\MR {3250522}},
  doi={10.1016/j.jpaa.2014.04.018},
}

\bib{BensonKS04}{article}{
  author={Benson, David},
  author={Krause, Henning},
  author={Schwede, Stefan},
  title={Realizability of modules over Tate cohomology},
  journal={Trans. Amer. Math. Soc.},
  volume={356},
  date={2004},
  number={9},
  pages={3621--3668 (electronic)},
  issn={0002-9947},
  review={\MR {2055748}},
  doi={10.1090/S0002-9947-03-03373-7},
}

\bib{Bourbaki98}{book}{
  author={Bourbaki, Nicolas},
  title={Commutative algebra. Chapters 1--7},
  series={Elements of Mathematics (Berlin)},
  note={Translated from the French; Reprint of the 1989 English translation},
  publisher={Springer-Verlag, Berlin},
  date={1998},
  pages={xxiv+625},
  isbn={3-540-64239-0},
  review={\MR {1727221}},
}

\bib{Bruner09}{misc}{
  author={Bruner, Robert R.},
  title={An Adams spectral sequence primer},
  status={Unpublished notes},
  date={2009},
}

\bib{Carlson96}{book}{
  author={Carlson, Jon F.},
  title={Modules and group algebras},
  series={Lectures in Mathematics ETH Z\"urich},
  note={Notes by Ruedi Suter},
  publisher={Birkh\"auser Verlag, Basel},
  date={1996},
  pages={xii+91},
  isbn={3-7643-5389-9},
  review={\MR {1393196}},
  doi={10.1007/978-3-0348-9189-9},
}

\bib{Chebolu08}{article}{
  author={Chebolu, Sunil K.},
  author={Christensen, J. Daniel},
  author={Min{\'a}{\v {c}}, J{\'a}n},
  title={Ghosts in modular representation theory},
  journal={Adv. Math.},
  volume={217},
  date={2008},
  number={6},
  pages={2782--2799},
  issn={0001-8708},
  review={\MR {2397466 (2008m:20018)}},
  doi={10.1016/j.aim.2007.11.008},
}

\bib{Christensen98}{article}{
  author={Christensen, J. Daniel},
  title={Ideals in triangulated categories: phantoms, ghosts and skeleta},
  journal={Adv. Math.},
  volume={136},
  date={1998},
  number={2},
  pages={284--339},
  issn={0001-8708},
  review={\MR {1626856 (99g:18007)}},
  doi={10.1006/aima.1998.1735},
}

\bib{Canim}{webpage}{
  author={Christensen, J. Daniel},
  title={Python code for comparing sequences},
  url={https://trinket.io/python/8932cfbf2f},
  date={2015},
}

\bib{Cohen68}{article}{
  author={Cohen, Joel M.},
  title={The decomposition of stable homotopy},
  journal={Ann. of Math. (2)},
  volume={87},
  date={1968},
  pages={305--320},
  issn={0003-486X},
  review={\MR {0231377 (37 \#6932)}},
}

\bib{EilenbergM66}{article}{
  author={Eilenberg, Samuel},
  author={Moore, John C.},
  title={Homology and fibrations. I. Coalgebras, cotensor product and its derived functors},
  journal={Comment. Math. Helv.},
  volume={40},
  date={1966},
  pages={199--236},
  issn={0010-2571},
  review={\MR {0203730}},
}

\bib{EKMM97}{book}{
  author={Elmendorf, A. D.},
  author={Kriz, I.},
  author={Mandell, M. A.},
  author={May, J. P.},
  title={Rings, modules, and algebras in stable homotopy theory},
  series={Mathematical Surveys and Monographs},
  volume={47},
  note={With an appendix by M. Cole},
  publisher={American Mathematical Society, Providence, RI},
  date={1997},
  pages={xii+249},
  isbn={0-8218-0638-6},
  review={\MR {1417719 (97h:55006)}},
}

\bib{Gelfand03}{book}{
  author={Gelfand, Sergei I.},
  author={Manin, Yuri I.},
  title={Methods of homological algebra},
  series={Springer Monographs in Mathematics},
  edition={2},
  publisher={Springer-Verlag, Berlin},
  date={2003},
  pages={xx+372},
  isbn={3-540-43583-2},
  review={\MR {1950475 (2003m:18001)}},
  doi={10.1007/978-3-662-12492-5},
}

\bib{Harper02}{book}{
  author={Harper, John R.},
  title={Secondary cohomology operations},
  series={Graduate Studies in Mathematics},
  volume={49},
  publisher={American Mathematical Society, Providence, RI},
  date={2002},
  pages={xii+268},
  isbn={0-8218-3198-4},
  review={\MR {1913285 (2004c:55035)}},
  doi={10.1090/gsm/049},
}

\bib{Heller68}{article}{
  author={Heller, Alex},
  title={Stable homotopy categories},
  journal={Bull. Amer. Math. Soc.},
  volume={74},
  date={1968},
  pages={28--63},
  issn={0002-9904},
  review={\MR {0224090 (36 \#7137)}},
}

\bib{HoveyPS97}{article}{
  author={Hovey, Mark},
  author={Palmieri, John H.},
  author={Strickland, Neil P.},
  title={Axiomatic stable homotopy theory},
  journal={Mem. Amer. Math. Soc.},
  volume={128},
  date={1997},
  number={610},
  pages={x+114},
  issn={0065-9266},
  review={\MR {1388895}},
  doi={10.1090/memo/0610},
}

\bib{HoveyS99}{book}{
  author={Hovey, Mark},
  author={Strickland, Neil P.},
  title={Morava $K$-theories and localisation},
  series={Mem. Amer. Math. Soc.},
  volume={139},
  publisher={AMS},
  date={1999},
}

\bib{Klaus01}{article}{
  author={Klaus, Stephan},
  title={Towers and pyramids. I},
  journal={Forum Math.},
  volume={13},
  date={2001},
  number={5},
  pages={663--683},
  issn={0933-7741},
  review={\MR {1858494 (2002k:55044)}},
  doi={10.1515/form.2001.028},
}

\bib{Kochman80}{article}{
  author={Kochman, Stanley O.},
  title={Uniqueness of Massey products on the stable homotopy of spheres},
  journal={Canad. J. Math.},
  volume={32},
  date={1980},
  number={3},
  pages={576--589},
  issn={0008-414X},
  review={\MR {586976 (81k:55032)}},
  doi={10.4153/CJM-1980-044-9},
}

\bib{KrizM95}{article}{
  author={K{\v {r}}{\'{\i }}{\v {z}}, Igor},
  author={May, J. P.},
  title={Operads, algebras, modules and motives},
  language={English, with English and French summaries},
  journal={Ast\'erisque},
  number={233},
  date={1995},
  pages={iv+145pp},
  issn={0303-1179},
  review={\MR {1361938}},
}

\bib{Margolis83}{book}{
  author={Margolis, H. R.},
  title={Spectra and the Steenrod algebra},
  series={North-Holland Mathematical Library},
  volume={29},
  note={Modules over the Steenrod algebra and the stable homotopy category},
  publisher={North-Holland Publishing Co., Amsterdam},
  date={1983},
  pages={xix+489},
  isbn={0-444-86516-0},
  review={\MR {738973 (86j:55001)}},
}

\bib{Maunder63}{article}{
  author={Maunder, C. R. F.},
  title={Cohomology operations of the $N{\rm th}$ kind},
  journal={Proc. London Math. Soc. (3)},
  volume={13},
  date={1963},
  pages={125--154},
  issn={0024-6115},
  review={\MR {0211398 (35 \#2279)}},
}

\bib{Maunder64}{article}{
  author={Maunder, C. R. F.},
  title={On the differentials in the Adams spectral sequence},
  journal={Proc. Cambridge Philos. Soc.},
  volume={60},
  date={1964},
  pages={409--420},
  review={\MR {0167980 (29 \#5245)}},
}

\bib{McKeown12}{article}{
  author={McKeown, J.C.},
  title={nLab: Toda bracket},
  url={http://ncatlab.org/nlab/show/Toda+bracket},
  eprint={http://ncatlab.org/nlab/show/Toda+bracket},
  edition={revision 1},
  date={2012},
}

\bib{Meier12}{book}{
  author={Meier, Lennart},
  title={United Elliptic Homology},
  note={Thesis (Ph.D.)--Universit\"at Bonn},
  date={2012},
  pages={(no paging)},
}

\bib{MO100272}{webpage}{
  title={MathOverflow: Second homotopy group of the mod $2$ Moore spectrum},
  url={http://mathoverflow.net/questions/100272/},
}

\bib{MosherT68}{book}{
  author={Mosher, Robert E.},
  author={Tangora, Martin C.},
  title={Cohomology operations and applications in homotopy theory},
  publisher={Harper \& Row, Publishers, New York-London},
  date={1968},
  pages={x+214},
  review={\MR {0226634 (37 \#2223)}},
}

\bib{Neeman01}{book}{
  author={Neeman, Amnon},
  title={Triangulated categories},
  series={Annals of Mathematics Studies},
  volume={148},
  publisher={Princeton University Press, Princeton, NJ},
  date={2001},
  pages={viii+449},
  isbn={0-691-08685-0},
  isbn={0-691-08686-9},
  review={\MR {1812507 (2001k:18010)}},
}

\bib{Patchkoria12}{article}{
  author={Patchkoria, Irakli},
  title={On the algebraic classification of module spectra},
  journal={Algebr. Geom. Topol.},
  volume={12},
  date={2012},
  number={4},
  pages={2329--2388},
  issn={1472-2747},
  review={\MR {3020210}},
  doi={10.2140/agt.2012.12.2329},
}

\bib{Ravenel04}{book}{
  author={Ravenel, Douglas C.},
  title={Complex cobordism and stable homotopy groups of spheres},
  edition={Second},
  volume={347},
  publisher={AMS Chelsea Publishing},
  date={2004},
  pages={xx+395},
  isbn={0-8218-2967-X},
  isbn={978-0-8218-2967-7},
}

\bib{Sagave08}{article}{
  author={Sagave, Steffen},
  title={Universal Toda brackets of ring spectra},
  journal={Trans. Amer. Math. Soc.},
  volume={360},
  date={2008},
  number={5},
  pages={2767--2808},
  issn={0002-9947},
  review={\MR {2373333 (2008j:55009)}},
  doi={10.1090/S0002-9947-07-04487-X},
}

\bib{SchwedeS03}{article}{
  author={Schwede, Stefan},
  author={Shipley, Brooke},
  title={Stable model categories are categories of modules},
  journal={Topology},
  volume={42},
  date={2003},
  number={1},
  pages={103--153},
  issn={0040-9383},
  review={\MR {1928647}},
  doi={10.1016/S0040-9383(02)00006-X},
}

\bib{Schwede10}{article}{
  author={Schwede, Stefan},
  title={Algebraic versus topological triangulated categories},
  conference={ title={Triangulated categories}, },
  book={ series={London Math. Soc. Lecture Note Ser.}, volume={375}, publisher={Cambridge University Press, Cambridge}, },
  date={2010},
  pages={389--407},
  review={\MR {2681714 (2012i:18012)}},
  doi={10.1017/CBO9781139107075.010},
}

\bib{Schwede12}{book}{
  author={Schwede, Stefan},
  title={Symmetric spectra},
  edition={Version 3.0},
  date={2012-04-12},
  status={Unpublished},
}

\bib{Shipley02}{article}{
  author={Shipley, Brooke},
  title={An algebraic model for rational $S^1$-equivariant stable homotopy theory},
  journal={Q. J. Math.},
  volume={53},
  date={2002},
  number={1},
  pages={87--110},
  issn={0033-5606},
  review={\MR {1887672 (2003a:55026)}},
  doi={10.1093/qjmath/53.1.87},
}

\bib{Shipley07HZ}{article}{
  author={Shipley, Brooke},
  title={$H\mathbb Z$-algebra spectra are differential graded algebras},
  journal={Amer. J. Math.},
  volume={129},
  date={2007},
  number={2},
  pages={351--379},
  issn={0002-9327},
  review={\MR {2306038}},
  doi={10.1353/ajm.2007.0014},
}

\bib{Spanier63}{article}{
  author={Spanier, E.},
  title={Higher order operations},
  journal={Trans. Amer. Math. Soc.},
  volume={109},
  date={1963},
  pages={509--539},
  issn={0002-9947},
  review={\MR {0158399 (28 \#1622)}},
}

\bib{Toda62}{book}{
  author={Toda, Hirosi},
  title={Composition methods in homotopy groups of spheres},
  series={Annals of Mathematics Studies, No. 49},
  publisher={Princeton University Press},
  place={Princeton, N.J.},
  date={1962},
  pages={v+193},
  review={\MR {0143217 (26 \#777)}},
}

\bib{Weibel94}{book}{
  author={Weibel, Charles},
  title={An introduction to homological algebra},
  series={Cambridge Studies in Advanced Mathematics},
  number={38},
  publisher={Cambridge University Press},
  date={1994},
}

\end{biblist}
\end{bibdiv}

\vspace*{10pt}

\end{document}